\tikzset{>={Latex[width=2mm,length=2mm]}}
\def\greyboxvar#1#2
\def\greybox#1
\renewcommand{\arraystretch}{1.3} 
\def\spam{\mathop{\rm span}\nolimits}
\def\gcd{\mathop{\rm gcd}\nolimits}
\def\lcm{\mathop{\rm lcm}\nolimits}
\def\Orb{\mathop{\rm Orb}\nolimits}
\newcommand\divides{\mid}
\def\rank{\mathop{\rm rank}\nolimits}
\def\pmat#1{\begin{pmatrix}#1\end{pmatrix}}
\def\question#1{{\bf Question: }#1}
\def\question#1{}
\def\cG{{\cal G}}
\def\cL{{\cal L}}
\def\cO{{\cal O}}
\def\cT{{\cal T}}
\def\CC{\mathbb{C}}
\def\HH{\mathbb{H}}
\def\ZZ{\mathbb{Z}}
\newtheorem{theorem}{Theorem}[section]
\newtheorem{corollary}{Corollary}[section]
\newtheorem{lemma}{Lemma}[section]
\newtheorem{example}{Example}[section]
\newtheorem{proposition}{Proposition}[section]
\newtheorem{definition}{Definition}[section]
\newenvironment{proof}{{\noindent \it
Proof.}}{\hfill$\Box$\medskip}
\newif\ifdraft\def\draft{\drafttrue\hoffset=.8truecm\showlabeltrue
\def\comment##1{{\bf comment: ##1}}
\headline={\sevenrm \hfill \ifx\filenamed\undefined\jobname\else\filenamed\fi%
(.tex) (as of \ifx\updated\undefined???\else\updated\fi)
 \TeX'ed at {\hour\time\divide\hour by 60{}%
\minutes\hour\multiply\minutes by 60{}%
\advance\time by -\minutes
\the\hour:\ifnum\time<10{}0\fi\the\time\  on \today\hfill}}
}
\def\inpro#1{\langle#1\rangle}
\def\ip#1{\langle\kern-.28em\langle#1\rangle\kern-.28em\rangle_\nu}
\def\cD{{\cal D}}
\def\cO{{\cal O}}
\def\cT{{\cal T}}
\def\cI{{\cal I}}
\def\cH{{\cal H}}
\def\openR{{{\rm I}\kern-.16em {\rm R}}}
\let\ga\alpha
\let\gb\beta
\let\gga\gamma
\let\gep\varepsilon
\let\gL\Lambda
\let\gs\sigma
\let\go\omega
\let\gO\Omega
\let\ga\alpha
\let\gga\gamma
\let\gb\beta
\let\gs\sigma
\def\inpro#1{\langle#1\rangle}
\def\Iff{\hskip1em\Longleftrightarrow\hskip1em}
\def\Implies{\hskip1em\Longrightarrow\hskip1em}
\def\mod{\rm\ mod\ }
\font\bfmedtype=cmbx10 scaled\magstep1 
\def\formeq{\the\sectionno.\the\equationno}  
\def\elabel#1/#2/#3/{\global\advance\equationno by 1 %
\ifx#1\empty\else\emember#1%
\ifshowlabel\marginal{\string#1}\fi\fi%
\ifmmode\eqno{#3(\formeq#2)}\else#3\formeq#2\fi} 
\def\makeblanksquare#1#2{
\dimen0=#1pt\advance\dimen0 by -#2pt
      \vrule height#1pt width#2pt depth0pt\kern-#2pt
      \vrule height#1pt width#1pt depth-\dimen0 \kern-#1pt
      \vrule height#2pt width#1pt depth0pt \kern-#2pt
      \vrule height#1pt width#2pt depth0pt
}
\font\bfmedtype=cmbx10 scaled\magstep1
\title{\bf 
An elementary classification of the quaternionic reflection groups of rank two
}
\author{Shayne Waldron\\ 
 \\
Department of Mathematics \\ University of Auckland\\
Private
Bag 92019, Auckland, New Zealand\\
e--mail: waldron@math.auckland.ac.nz}
\begin{document}

\maketitle 

\begin{abstract}

We give an elementary classification and 
presentation of the finite quaternionic reflection 
groups of rank two, based on the notion of a ``reflection system''. 
This simplifies the existing classification, which is shown to be
incomplete, e.g., there exist four imprimitive quaternionic reflection groups of order $192$ with $22$ 
reflections which are not isomorphic (one of which was previously unknown).


\end{abstract}

\bigskip
\vfill

\noindent {\bf Key Words:}
imprimitive quaternionic reflection groups,
reflection systems,
symplectic group,
binary polyhedral groups,
dicyclic groups,
finite collineation groups,
representations over the quaternions,
Frobenius-Schur indicator,

\bigskip
\noindent {\bf AMS (MOS) Subject Classifications:}
primary
05B30, \ifdraft (Other designs, configurations) \else\fi
15B33, \ifdraft (Matrices over special rings (quaternions, finite fields, etc.) \else\fi
20C25, \ifdraft Projective representations and multipliers \else\fi
20G20, \ifdraft Linear algebraic groups over the reals, the complexes, the quaternions \else\fi
51M05, \ifdraft (Euclidean geometries (general) and generalizations) \else\fi
51M20, \ifdraft (Polyhedra and polytopes; regular figures, division of spaces [See also 51F15]) \else\fi
\quad
secondary
15B57, \ifdraft (Hermitian, skew-Hermitian, and related matrices) \else\fi
51E99, \ifdraft Geometry,  None of the above, but in this section \else\fi
51M15, \ifdraft (Geometric constructions in real or complex geometry) \else\fi
65D30. \ifdraft (Numerical integration) \else\fi

\vskip .5 truecm
\hrule
\newpage

\section{Introduction}

The finite (unitary) quaternionic reflection groups were formally introduced and  then
classified by Cohen \cite{C80}, with various results then built upon it, e.g., 
the classification of parabolic subgroups \cite{BST23}, \cite{S23}, and certain conformal field theories
\cite{DZ24}. 

This followed in the spirit of the real reflection groups
which were classified by Coxeter \cite{C34}, and the complex reflection groups 
which were classified by Shephard and Todd \cite{ST54}. 
There are various relationships between these groups,
e.g., Cohen understood that the quaternionic reflection groups in one dimension were
classified by Stringham \cite{St1881}, and recognised that some imprimitive 
collineation groups for $\CC^4$ of Blichfeldt \cite{B17} give rise to primitive quaternionic reflection groups of rank $2$
(see \cite{K81} for an interesting overview, and \cite{Z97}, \cite{CS03}, \cite{V21} for general background). 

Here we consider the classification of the imprimitive quaternionic reflection groups of
rank two. 
These are finite groups of unitary $2\times 2$ monomial matrices over the quaternions $\HH$, 
which are generated by reflections, i.e., nonidentity matrices $g$ 
which fix a $1$-dimensional subspace (this is equivalent to $\rank(g-I)=1$).
Elementary calculations show that there are two types of reflections
$$ \pmat{h&0\cr0&1}, \ \pmat{1&0\cr0&h}, \quad h\ne1, \qquad
\pmat{0&b\cr b^{-1}&0},  $$
which have orders the order of $h$ and $2$, respectively. The roots for these 
reflections, i.e., a vector in the orthogonal complement of the fixed subspace
are $e_1$, $e_2$, $(1,-\overline{b})$, respectively.
The six primitive quaternionic reflection groups of rank two can be obtained
by adding single non-monomial reflection to an 
imprimitive reflection group \cite{W24}, \cite{BW25}.

Let $G$ be an irreducible imprimitive quaternionic reflection group
of rank two.
Then $G$ is conjugate in $U(\HH^2)$ to a unitary group where the reflections are
\begin{equation}
\label{Greflections}
\pmat{h&0\cr0&1}, \ \pmat{1&0\cr0&h}, \quad h\in H,\ h\ne1, \qquad
\pmat{0&b\cr b^{-1}&0}, \quad b\in L,
\end{equation}
where $1\in L$ (a standardisation condition), $1\in H$, and
$H$ and $L$ are subsets of $U(\HH)$.

\begin{itemize}
\item Since the reflections for a given root (e.g., $e_1$)
are a subgroup of $G$, we conclude that $H$ is a group.
\item Since the set of reflections in $G$ is closed under conjugation, we have that
\begin{align*}
&\pmat{h&0\cr0&1}\pmat{0&1\cr 1&0}\pmat{h&0\cr0&1}^{-1}
= \pmat{0&h\cr h^{-1}&0}\in G, \cr
&\pmat{0&b\cr b^{-1}&0}\pmat{h &0\cr0& 1}\pmat{0&b\cr b^{-1}&0}^{-1}
= \pmat{1&0\cr 0& b^{-1}h b}\in G, \qquad h\in H, \ b\in L.
\end{align*}
Hence $H\subset L$, and $H$ is a normal subgroup
of the group $K=\inpro{L}$ generated by $L$.
\end{itemize}
Therefore, each group $G$ of the above form is determined by $H\lhd K$, which are
uniquely defined up to conjugation in $U(\HH)$, the reflections in $G$,
and a suitable $L$.

Clearly, not every $H\lhd K$ and $L\subset K$ gives a reflection group
as above, and so the classification of the imprimitive reflection groups of rank two
can be reduced to finding the suitable subsets $L$ (and $H$),
and then any isomorphisms between the reflection groups that they give
-- just as was done by Shephard and Todd, and in turn Cohen. 
Our contribution to this approach is the
observation that the $L$ further satisfies certain algebraic properties which 
we describe as being a ``reflection system''.
The remainder of the paper is devoted to determining all the reflection systems, 
and consequently classifying the imprimitive quaternionic reflection groups
of rank two (Theorem \ref{CombinedClassification}).
The existing classification is shown to be incomplete, and with some double counting.

\section{Reflection systems and the canonical form}

Let $G$ be the imprimitive reflection group
generated by the reflections of (\ref{Greflections}). Then
\begin{align*}
& \pmat{0&b\cr b^{-1}&0} \pmat{1&0\cr0&h}
=\pmat{0& bh\cr (bh)^{-1}}\in G, \qquad h\in H, \ b\in L, \cr
& \pmat{0&a\cr a^{-1}&0}\pmat{0&b\cr b^{-1}&0}\pmat{0&a\cr a^{-1}&0}^{-1}
=\pmat{0&ab^{-1}a\cr (ab^{-1}a)^{-1}&0}\in G, \qquad a,b\in L,
\end{align*}
so that
\begin{enumerate}
\item $LH=L$, i.e., $L$ is a union of cosets in the group $K/H$ (and so $|H|$ divides $|L|$).
\item $ab^{-1}a\in L$, $\forall a,b\in L$, i.e.,
$L$ is closed under the binary operation $(a,b)\mapsto ab^{-1}a$.
\end{enumerate}
Hence, there is an irreducible imprimitive quaternionic reflection group
generated by reflections of the form (\ref{Greflections}) if and only if $H\lhd K$ and
$H\subset L \subset K $ satisfies $K=\inpro{L}$ and the two conditions
above hold, and we denote this group by $G=G(K,L,H)$. 

Let $L_G$ give the set of nondiagonal reflections in $G=G(K,L,H)$, i.e.,
\begin{equation}
\label{LGform} 
L_G := \{ b\in K: \pmat{0&b\cr b^{-1}&0} \in G\}\supset L.
\end{equation}
If all the nondiagonal reflections in $G=G(K,L,H)$ are given by $L$,
i.e., $L=L_G$, then we say that $G$ is in {\bf canonical form},
as a subgroup of $G(K,K,K)$, and we denote it by
\begin{equation}
\label{canonicalform}
G_K(L,H)=G(L,H)=G(K,L,H),
\end{equation}
and call this unique $(K,L,H)$, or $(L,H)$,
the {\bf canonical label} for the group $G$,
as a subgroup of $G_K(K,K)$.
We observe that for the canonical label
\begin{equation}
\label{HLdefn}
H_L := \bigl\{ h : \pmat{h&0\cr0&1}\in\inpro{\bigl\{\pmat{0&b\cr b^{-1}&0}\bigr\}_{b\in L}}\bigr\} \subset H.
\end{equation}

Clearly,
\begin{equation}
\label{GKLinclusions1}
G(K,L_1,H_1)\subset G(K,L_2,H_2), \qquad L_1\subset L_2,\ H_1\subset H_2,
\end{equation}
for suitable $(L_j,H_j)$, including the canonical labels. Indeed,
\begin{equation}
\label{GKLHsinclusion}
G(K_1,L_1,H_1)\subset G(K_2,L_2,H_2), \qquad
K_1\subset K_2,\ L_1\subset L_2,\ H_1\subset H_2,
\end{equation}
and so it may be possible to obtain the same reflection group for $K_1\ne K_2$.
This happens in two instances (see Example \ref{isomorphismI} and Theorem \ref{abapbp-lemma}).

We now define the fundamental algebraic object of our investigation.

\begin{definition}
We call a subset $L$ of $K$ (a finite group) a {\bf reflection system}
for $K$ if
\begin{enumerate}
\item $K=\inpro{L}$.
\item $L$ is closed under the binary operation $(a,b)\mapsto a \circ b := ab^{-1}a$.
\item $1\in L$.
\end{enumerate}
\end{definition}

We say that reflection systems $L$ and $L'$ for $K$ are {\bf isomorphic} if there
is a bijection $\psi:L\to L'$ which  
preserves $\circ$, i.e.,
$\psi(a\circ b) = \psi(a) \circ \psi(b)$.

\begin{example} The group $K=\ZZ_2\times\ZZ_2$ has two reflection systems
$$ L_3=\{(0,0),(1,0),(0,1)\},\qquad
L_4=\ZZ_2\times\ZZ_2=\{(0,0),(1,0),(0,1),(1,1)\}. $$
\end{example}

We will often define a reflection system $L$ via a {\bf generating} set, i.e., 
a subset $X$ of $L$ whose closure under the binary operation $\circ$
is $L$, and we write $L=L(X)$. We note that
\begin{itemize}
\item For any reflection group $G=G(K,L,H)$, $L$ is a reflection system for $K$.
\item $L=K$ is a reflection system for $K$, and
this is the only case where $L$ is a group.
\item Since $1\circ x=x^{-1}$, reflection systems are closed under taking inverses.
\item Since $x^{-1}\circ 1=x^2$, reflection systems are closed under taking squares.
\end{itemize}
Moreover, if $L$ is a reflection system and $x\in L$ (so that $x^{-1}\in L$), then 
it follows that
$xL$ and $Lx$ are reflection systems, via the calculations
\begin{align*}
&\hbox{$xa \circ xb = xa(xb)^{-1}xa=xab^{-1}x^{-1}xa= xab^{-1}a=x(a \circ b)$}, \cr
&\hbox{$ax \circ bx = ax(bx)^{-1}ax=axx^{-1}b^{-1}ax= ab^{-1}ax=(a \circ b)x$}, \qquad
	a,b\in L.
\end{align*}
The corresponding reflections for these systems are conjugate in $U(\HH^2)$ to those for $L$,
via the calculations
$$ 
\pmat{x&0\cr 0&1}^{-1}M_{xb}\pmat{x&0\cr 0&1}=M_b, \qquad
\pmat{1&0\cr 0&x}M_{bx}\pmat{1&0\cr 0&x}^{-1}=M_b, 
\qquad M_b:=\pmat{0&b\cr b^{-1}&0}. $$
Therefore, we consider reflection systems $L$ and $L'$ for $K$ to be {\bf equivalent} if
$L'$ equals $xL$ or $Lx$, for some $x\in L$, up to an automorphism of $K$.
This is an equivalence relation, and equivalent reflection systems lead to the same
reflection groups $G(K,L,H)$.
Clearly, equivalent reflection systems are isomorphic.

\begin{lemma} 
\label{Ageneratelemma}
Suppose that $\{1\}\cup A$ generates a reflection system $L$ for $K$.
If $x\in L$, then $\{1,x\}\cup xA$ and $\{1,x\}\cup Ax$ generate the reflection
systems $xL$ and $Lx$ for $K$, which are equivalent to (but possibly not equal to) $L$.
\end{lemma}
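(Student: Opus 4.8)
The plan is to establish two things for each of the two candidate generating sets: first that the claimed set does generate the claimed reflection system under $\circ$, and second that the resulting systems $xL$ and $Lx$ are equivalent to $L$ in the sense defined above. The equivalence is already handed to us essentially for free: the preceding paragraph established that $xL$ and $Lx$ are themselves reflection systems for $K$ (via the displayed identities $xa\circ xb=x(a\circ b)$ and $ax\circ bx=(a\circ b)x$), and by definition left/right translation by an element of $L$ is exactly an instance of the equivalence relation. So the only real content of the lemma is the generation claim.

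For the generation claim, I would argue that $\{1,x\}\cup xA$ generates $xL$. The natural approach is to use the homomorphism-like property of left translation. Since $\{1\}\cup A$ generates $L$, every element of $L$ is obtained from $\{1\}\cup A$ by finitely many applications of $\circ$. I want to transport this generation process over to $xL$. The key identity is $x(a\circ b)=xa\circ xb$, which says that the map $a\mapsto xa$ intertwines $\circ$ on $L$ with $\circ$ on $xL$. Concretely, I would show by induction on the length of a $\circ$-expression that if $w$ is any element of $L$ expressible as a $\circ$-word in $\{1\}\cup A$, then $xw$ is expressible as a $\circ$-word in $x\cdot(\{1\}\cup A)=\{x\}\cup xA$. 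Since $x\in L$ we have $x\cdot 1=x$, so the generating set on the target side is $\{x\}\cup xA$; I then note that adjoining $1$ (which lies in every reflection system and is needed to close up, e.g. to take inverses via $1\circ y=y^{-1}$) gives the stated set $\{1,x\}\cup xA$. The reverse inclusion — that the system generated by $\{1,x\}\cup xA$ lands inside $xL$ — follows because $xL$ is already known to be a reflection system containing all of $\{1,x\}\cup xA$ (indeed $1\in xL$ since $x^{-1}\in L$ forces $x\cdot x^{-1}=1\in$ the relevant set, and each $xa\in xL$ trivially), hence it contains the $\circ$-closure of that set.

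The argument for $\{1,x\}\cup Ax$ generating $Lx$ is entirely symmetric, using the right-translation identity $(a\circ b)x=ax\circ bx$ in place of the left one, so I would simply remark that it follows \emph{mutatis mutandis}.

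The main obstacle I anticipate is purely bookkeeping: I must be careful that the intertwining map $a\mapsto xa$ only respects the $\circ$-operation, not arbitrary group structure, so the induction must be set up on the formal $\circ$-word length rather than on group-theoretic expressions, and I must verify that the base cases match up (the generator $1$ on the source side maps to $x$, not to $1$, which is why the identity element has to be re-inserted by hand into the target generating set). A second small point worth checking is that $1$ genuinely belongs to $xL$ so that listing it among the generators is consistent; this is exactly the closure-under-inverses remark, $x^{-1}\in L\Rightarrow 1=x\cdot x^{-1}\in xL$. Neither of these is deep, but stating the induction cleanly is what makes the proof rigorous rather than a one-line appeal to ``transport of structure.''
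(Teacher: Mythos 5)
Your proposal is correct and follows essentially the same route as the paper: transport the $\circ$-generation along $a\mapsto xa$ (resp.\ $a\mapsto ax$), observe that $x^{-1}\in L$ gives $1=x\cdot x^{-1}\in xL$ (resp.\ $1=x^{-1}x\in Lx$) so the identity may be adjoined to the generating set, and use the already-established closure of $xL$ and $Lx$ under $\circ$ for the reverse inclusion. The only step the paper makes explicit that you leave to the earlier assertion is that the new sets still generate $K$ as a group, via $A=x^{-1}(xA)=(Ax)x^{-1}$ --- worth a line, but not a gap.
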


\begin{proof} We have observed that $xL$ and $Lx$ are closed under $\circ$,
for any $x$, and that they are generated by $x(\{1\}\cup A)$ and $(\{1\}\cup A)x$.
If $x\in L$, then $x^{-1}=x\circ 1\in L$,
so that $1=x^{-1}\in xL$ and $1=x^{-1}x\in Lx$, and hence  $\{1,x\}\cup xA$ and $\{1,x\}\cup Ax$ generate $xL$ and $Lx$.
The groups generated by the above two sets contains
$A=x^{-1} (xA)=(Ax)x^{-1}$, and so they generate $K=\inpro{A}$.
\end{proof}

The binary polyhedral groups are considered 
in Section \ref{binarypolysect}. 
It is convenient to now consider the following particular example for the purposes
of illustration and motivation.

\begin{example}
\label{TrefsysExample}
For the binary tetrahedral group of order $24$
\begin{equation}
\cT :=\inpro{i,{1+i+j+k\over 2}}, 
\end{equation}
the reflection system $\cT$ can be generated as follows
\begin{equation}
\label{L24T}
	L_{24}^\cT := \cT = L(\{1,i,j,{1+i+j+k\over 2}\}).
\end{equation}
There is also a second reflection system of size $12$ given by
\begin{equation}
\label{L12T}
L_{12}^\cT := L(\{1,i,{1+i+j+k\over 2}\}).
\end{equation}
There are twelve equivalent copies of this reflection system in $\cT$. 
Six of these can be obtained by left or right multiplication, namely
$L_{12}^\cT$, $iL_{12}^\cT$, $jL_{12}^\cT$, $kL_{12}^\cT$, and
	$$ \zeta^{-1}L_{12}^\cT=L(\{1,j,\zeta\}), \quad
	\zeta L_{12}^\cT= L(\{1,k,\zeta\}), \qquad \zeta:={1+i+j+k\over 2}. $$
The automorphism $i\mapsto j$, $j\mapsto i$, gives another copy
$$ L(\{1,j,{1+i+j+k\over 2}\}), $$
and multiplication of this (as above) gives the remaining six copies.
\end{example}

Reflection systems are constructed implicitly in \cite{C80}, via
$$ L=L_\ga:=\{x\in K: \ga(xH)=x^{-1}H\}, $$
where $\ga$ is an automorphism of $K/H$ ($H\lhd K$)
of order one (the identity) or two.
This is easily verified, e.g., for $a,b\in L_\ga$, we have
\begin{align*}
\ga(ab^{-1}aH)
&=\ga(aH)\ga(b^{-1}H)\ga(aH) \cr
&=(a^{-1}H)(bH)(a^{-1}H)
= a^{-1}ba^{-1}H
= (ab^{-1}a)^{-1}H,
\end{align*}
so that $ab^{-1}a\in L_\ga$.
Conversely, if $L$ is a reflection system for $K$, and $H\lhd K$ with $L=LH$, then
\begin{equation}
\label{ga-auto-defn}
\ga(xH):=x^{-1}H, \qquad x\in L,
\end{equation}
defines an automorphism of $K/H$ of order $\le 2$.

For the groups of order $96$ and $48$ constructed in \cite{C80} (Table I) 
from what is essentially the $12$ element reflection system $L_{12}^\cT$ for $\cT$ of (\ref{L12T}),
automorphisms of order two are taken, namely
conjugation by the permutation $(1\,2)$, where $H=C_2$, $K/H\cong A_4$,
and conjugation by $i-j$ (for which $(i-j)^2=-2$), where $H=C_1$.

We can now give the structural form of a reflection group in the canonical form.

\begin{lemma}
\label{GKLHstructure}
A reflection group in the canonical form has the following elements
\begin{equation}
\label{GKLHelements}
G=G_K(L,H)=\bigl\{\pmat{b&0\cr0&b_\ga\, h}\pmat{0&1\cr1&0}^m :b\in K,\ h\in H,\ m=0,1\Bigr\},
\end{equation}
	where $b_\ga$ is any element of the coset $\ga(bH)$ of (\ref{ga-auto-defn}),
	e.g., $b_\ga=b^{-1}$, $b\in L$.
	Therefore 
\begin{enumerate}[(i)]
\item $G$ has order  $2|H||K|$.
\item $G$ has $2|H|+|L|-2$ reflections.
\end{enumerate}
\end{lemma}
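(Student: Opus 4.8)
The plan is to verify the claimed description of the elements directly, and then read off the order and reflection count as immediate consequences. First I would establish that every element of $G=G_K(L,H)$ has the stated form. Since $G$ is generated by the reflections in (\ref{Greflections}), I would argue that $G$ is contained in the set of monomial matrices, and that a general diagonal element is a product of reflections of the two diagonal types. The nondiagonal reflections $M_b=\pmat{0&b\cr b^{-1}&0}$ (with $b\in L=K$ in canonical form) swap the two coordinates, which accounts for the factor $\pmat{0&1\cr1&0}^m$ with $m\in\{0,1\}$. The key structural point is the constraint linking the two diagonal entries: I would show that in a diagonal element $\pmat{b&0\cr0&c}\in G$, the second entry $c$ is forced to lie in $\ga(bH)$, where $\ga$ is the order-$\le 2$ automorphism of $K/H$ defined in (\ref{ga-auto-defn}). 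This uses that a product of nondiagonal reflections $M_aM_b=\pmat{ab^{-1}&0\cr0&a^{-1}b}$ has top-left $ab^{-1}$ and bottom-right $a^{-1}b=(ab^{-1})^{-1}$ modulo $H$, matching the map $b\mapsto b^{-1}$ on cosets, together with the diagonal reflections $\pmat{1&0\cr0&h}$ that multiply the bottom entry by $h\in H$ freely. Conversely, I would check every matrix of the displayed form is indeed a product of the generating reflections, so that the containment is an equality.

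With the element description in hand, the two enumerations are routine counting arguments. For (i), the data $(b,h,m)$ ranges over $K\times H\times\{0,1\}$, giving $|K|\cdot|H|\cdot 2$ candidate matrices; the main thing to confirm is that this parametrisation is a bijection, i.e.\ distinct triples give distinct matrices. The factor $m$ is determined by whether the matrix is diagonal or antidiagonal, $b$ is the surviving entry in the top-left (or top-right) position, and $h$ is recovered from the constraint once $b$ and the coset $\ga(bH)$ are fixed; since the coset is determined by $b$, the element $b_\ga h$ determines $h$ uniquely. Hence the count is exactly $2|H||K|$.

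For (ii), I would count the reflections by type using the classification of reflections recalled in the introduction. The diagonal reflections are $\pmat{h&0\cr0&1}$ and $\pmat{1&0\cr0&h}$ with $h\in H$, $h\ne1$; each type contributes $|H|-1$ reflections (excluding the identity), giving $2(|H|-1)=2|H|-2$ diagonal reflections. The nondiagonal reflections are exactly the $M_b$ with $b\in L_G=L$ (by the canonical-form hypothesis $L=L_G$), contributing $|L|$ reflections. Summing yields $2|H|-2+|L|=2|H|+|L|-2$, as claimed.

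The main obstacle will be the forward containment in the first step, namely showing that the generated group does not produce diagonal entries outside the prescribed cosets. This requires a clean inductive or normal-form argument on words in the generators, keeping track of the coset-level action on $K/H$ and verifying that the bottom-right entry is always constrained to $\ga$ of the top-left coset, with the residual freedom landing exactly in $H$. Once this coset bookkeeping is pinned down, closure of $L$ under $\circ$ and the coset condition $L=LH$ guarantee the set is closed under multiplication, and everything else follows by direct inspection.
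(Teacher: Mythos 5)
Your proposal is correct and follows essentially the same route as the paper: reduce to the diagonal subgroup via the swap matrix, show that products of pairs of nondiagonal reflections (together with the diagonal reflections) produce exactly the diagonal matrices $\pmat{b&0\cr0&c}$ with $c\in\ga(bH)$, and then count parameters and reflection types. The coset bookkeeping you flag as the main obstacle is exactly what the paper carries out, by writing $b=b_1\cdots b_r$ with $b_i\in L$ and computing $\ga(b_1^{-1}\cdots b_r^{-1}H)=b_1\cdots b_rH$ using that $\ga$ is a homomorphism on $K/H$.
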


\begin{proof} Since 
	$$ \pmat{a&0\cr0&b}\pmat{0&1\cr1&0} = \pmat{0&a\cr b&0}, $$
there is a $1$-$1$ correspondence between the diagonal and nondiagonal elements
of $G$. 
Hence, it suffices to determine the subgroup of diagonal matrices in $G$.
A product of two reflections given by elements of $L$ is diagonal, i.e., 
\begin{equation}
\label{productoftwo}
\pmat{0&a\cr a^{-1}&0}\pmat{0&b^{-1}\cr b&0}
=\pmat{ab&0\cr0&a^{-1}b^{-1}}, \qquad a,b\in L,
\end{equation}
so that $G$ contains the diagonal matrices of the form
\begin{equation}
\label{bproducts} \pmat{b_1b_2\cdots b_r & 0\cr 0 & b_1^{-1}b_2^{-1}\cdots b_r^{-1} h}, \qquad
b_1,\ldots,b_r \in L, \quad h\in H. 
\end{equation}
To see this, take products of the diagonal matrices in (\ref{productoftwo}), choosing
some $b=1$ to get the case when $r$ is odd, and right multiply by the appropriate
diagonal reflection.

Since $L$ generates $K$, for any $b\in K$, 
there is at least one choice in (\ref{bproducts}), with
$$ b = b_1b_2\cdots b_r. $$
	We claim that $b_1^{-1}b_2^{-1}\cdots b_r^{-1} h\in\ga(bH)$,
	so that $b_1^{-1}b_2^{-1}\cdots b_r^{-1} h =b_\ga\, h'$, with $h'\in H$.
This follows from the calculation
\begin{align*}
\ga(b_1^{-1}b_2^{-1}\cdots b_r^{-1} H)
&=\ga(b_1^{-1}H)\ga(b_2^{-1} H)\cdots \ga(b_r^{-1} H) \cr
&= (b_1 H)(b_2 H)\cdots(b_r H)
= b_1b_2\cdots b_r H =bH,
\end{align*}
	$$ \Implies b_1^{-1}b_2^{-1}\cdots b_r^{-1} H = \ga^2(b_1^{-1}b_2^{-1}\cdots b_r^{-1} H)
	=\ga(bH). $$
Since $H$ is normal in $K$, the matrices in (\ref{bproducts}) form a group,
i.e., they give all the diagonal matrices in $G$, and we obtain (\ref{GKLHelements}) 
	and hence $|G|=2|H||K|$.

Finally, from (\ref{Greflections}), $G$ has $2(|H|-1)+|L|=2|H|+|L|-2$ reflections.
\end{proof}

We can use Lemma \ref{GKLHstructure} to address the question of when two 
reflection groups in the canonical form $G_K(L,H)$ and $G_{K'}(L',H')$ give 
the same reflection group. 
For this to happen they must have the same order and number of reflections, i.e.,
\begin{equation}
\label{GKHLequivcnd}
2|H||K|=2|H'||K'|, \qquad 2|H|+|L|-2= 2|H'|+|L'|-2.
\end{equation}
There are three cases:

\begin{enumerate}[(i)]
\item If the groups $K$ and $K'$ are the same, so that $|K|=|K'|$, then this gives
$$ |H|=|H'|, \qquad |L|=|L'|. $$
Hence, the groups $G_K(L,H)$ would all be different if $K$ has only one reflection system of
any given size (up to equivalence) and one normal subgroup $H$ of any given order. 
It turns out (from our calculations) that this is always the case.

\item If $K$ and $K'$ are different, with the same orders, then again $|L|=|L'|$. 
But $L$ and $L'$ have different algebraic structures (they generate
different groups), and so the reflection groups generated by
the reflections that they give are not isomorphic, and correspondingly 
the groups $G_K(L,H)$ and $G_{K'}(L',H')$ are different.

\item If $K$ and $K'$ are different, with different orders, say $|K|<|K'|$, then the
	reflections in the corresponding groups must all be of order $2$, so that (see 
		Lemma \ref{isomorphismlemma})
$$ H=C_2,\quad H'=1, \qquad |K'|=2|K|, \qquad |L'|=|L|+2. $$
We will see there are cases where this can happen, and the reflection groups
are the same (Example \ref{isomorphismI} and Theorem \ref{abapbp-lemma}).
\end{enumerate}

We now outline the calculations to follow.

\section{Finite subgroups of $U(\HH)$ and the general method}

The finite subgroups $K$ of $U(\HH)$, equivalently of the unit quaternions $\HH^*$, 
were classified (up to conjugation) by 
Stringham \cite{St1881}. They are (see \cite{LT09} Theorem 5.12)
\begin{enumerate}[\it(i)]
\item The cyclic group $C_n$ of order $n$, $n\ge1$.
\item The dicyclic (or binary dihedral group) $\cD_n={\rm Dic}_n$ of order $4n$, $n\ge2$.
\item The binary tetrahedral group $\cT$ of order $24$.
\item The binary octahedral group $\cO$ of order $48$.
\item The binary icosahedral group $\cI$ of order $120$.
\end{enumerate}
The notation $\cD_n$ for the dicyclic group 
(as for the dihedral group) is not
standardised, e.g., $Q_{4n}$ is also used. 
We use the indexing of \cite{C80} for the purpose of easy comparison.
From these, we proceed as follows.

\vskip0.6truecm
\noindent {\bfmedtype General method} (Classification of imprimitive reflection
groups of rank two)
\vskip0.2truecm

\noindent
For each finite group $K\subset U(\HH)$, as above, determine the
reflection systems $L$ (there is always $L=K$). Then
for a given reflection system $L$ of $K$, construct the corresponding
reflection groups, for which there are exactly $|L|$ nondiagonal reflections 
(given by $L$), as follows:

\begin{enumerate}
\item Determine the normal subgroup $H_L\lhd K$ of (\ref{HLdefn}), to obtain
	the reflection group
\begin{equation} 
\label{Basegroup}
G_K(L,H_L),
\end{equation}
which we call the {\bf base group} for $L$.
\item Determine the other reflection groups with nondiagonal reflections given by $L$,
which we call the {\bf higher order groups} for $L$.
These contain $G_K(L,H_L)$ as a proper subgroup, and are given by
\begin{equation}
\label{Lgroups}	
G_K(L,H), \qquad H\ne H_L \quad L_H=L,
\end{equation}
where $H\lhd K$, $H_L\subset H\subset L$, with $LH=L$.
In particular, we have $|H|$ divides $|L|$.

We observe $L_{H}\subset L_{H'}$, $H\subset H'$, so that once 
an $H$ with $L_H\ne L$ has been found, none of the groups $G(K,L,H')$, 
$H'\supset H$ are in the canonical form, and hence are not 
included in our classification.
\item Determine any isomorphisms between $G_K(L,H)$ and $G_{K'}(L',H')$,
so that the list of imprimitive reflection groups has no duplicates.
\end{enumerate}
In summary:

\begin{itemize}
\item
We work up from the base group for $L$, adding diagonal reflections given by $H$,
until the point where any enlargement of $H$ introduces new nondiagonal reflections.
\end{itemize}


For any reflection group $G(K,L,H)$, it is often convenient to give a generating
set of reflections given by subsets $\cL\subset L$, $\cH\subset H$, i.e.,
$$ \pmat{h&0\cr0&1}, \quad h\in\cH, \qquad \pmat{0&b\cr b^{-1}&0}, \quad b\in\cL, $$
and we write
\begin{equation}
\label{cGdefn}
G(K,L,H)=\cG(\cL,\cH).
\end{equation}
If $b=1\in\cL$, then it suffices to take just one of the reflections corresponding to a given $h\in\cH$, since
$$ \pmat{0&b\cr b^{-1}&0}\pmat{h&0\cr0&1}\pmat{0&b\cr b^{-1}&0}
=\pmat{1&0\cr0&b^{-1}hb}. $$
Typically, one would take $\cL$ to be a generating set for $L$, and $\cH$ to be 
elements of $H\setminus H_L$ that together with $H_L$ generate $H$.
As larger $\cH$ are taken, it is often possible to remove elements from $\cL$
to get a smaller generating set. 

As an illustration, we now consider the case (i), i.e., when $K$ is the cyclic group $C_n$, 
which gives the complex reflection groups.

\begin{example} (Cyclic group)
	Let $K=C_n=\inpro{\go}$, where $\go=e^{2\pi i\over n}$ is a primitive $n$-th root of unity.
There is just one reflection system
$$ L_n=L(\{1,\go\}) = \{1,\go,\ldots,\go^{n-1}\}, $$
and these are the only complex reflection systems.
The base group
	$$ G_{C_n}(L_n,1)=\cG(\{1,\go\},\{\}), \qquad n\ge3, $$
is the dihedral group of order $2n$. 
The normal subgroups of $K=C_n$ are 
$$ H=C_{n/p}=\inpro{\{\go^p\}}, \qquad p\divides n. $$
Each of these gives a higher order group for $L_n$ (for $p\ne n$),
and we obtain the following imprimitive reflection groups for the reflection system $L_n=C_n$
$$ G_{C_n}(C_n,C_{n/p}) =\cG(\{1,\go\},\{\go^p\}) = G(n,p,2), \qquad
|G|= 2 n {n\over p}, $$
where $G(n,p,2)$ is the notation used by Shephard and Todd. 
	The inclusions between them follow immediately from (\ref{GKLinclusions1}).

	For $n=2$, we have the only real imprimitive reflection group $G(2,1,2)$
	(Heisenberg group), with the base group $G(2,2,2)$ being reducible.
\end{example}

\begin{lemma}
\label{isomorphismlemma}
Let $|K|<|K'|$. If the reflection groups $G_K(L,H)$ and $G_{K'}(L',H')$
are isomorphic,
then they can only have reflections of order $2$, so that $H=C_2$, $H'=1$, and
$$ |K'|=2|K|, \qquad |L'|=|L|+2.  $$
Moreover, $L$ is a subreflection system of $L'$, and $K$ is a subgroup of $K'$.
\end{lemma}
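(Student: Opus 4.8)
The plan is to read off two numerical invariants from Lemma~\ref{GKLHstructure}, match them under the isomorphism, and then pin down $H$ and $H'$ by an element-order count together with Cauchy's theorem.

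First I would record the invariants of $G=G_K(L,H)$: its order $2|H||K|$ and its collection of reflections. Of the $2|H|+|L|-2$ reflections, the $|L|$ nondiagonal ones $M_b$ all square to the identity (so have order $2$), while the diagonal reflections $\pmat{h&0\cr0&1}$, $\pmat{1&0\cr0&h}$ with $h\in H\setminus\{1\}$ have order equal to the order of $h$, each such order occurring twice. Reading ``isomorphic'' as an isomorphism of reflection groups (which is what ``the same reflection group'' means here, and which sends reflections to reflections preserving order), equality of orders gives $|H|\,|K|=|H'|\,|K'|$, whence $|K|<|K'|$ forces $|H|>|H'|$; and for every $r>2$ the number of order-$r$ reflections must agree, i.e.
$$\#\{h\in H:\ \mathrm{ord}(h)=r\}=\#\{h'\in H':\ \mathrm{ord}(h')=r\},\qquad r>2.$$

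Next I would exploit that $U(\HH)$ has a unique involution, namely $-1$, so each of $H,H'$ contains either $0$ or $1$ element of order $2$ and (by Cauchy) contains $-1$ exactly when its order is even. Writing $N$ for the common number of elements of order $>2$ in $H$ and in $H'$, the counts give $|H|=1+\iota+N$ and $|H'|=1+\iota'+N$ with $\iota,\iota'\in\{0,1\}$; since $|H|>|H'|$ this forces $|H|-|H'|=\iota-\iota'=1$, so $\iota=1$, $\iota'=0$, and $|H|=|H'|+1$ with $|H'|$ odd. If $|H'|>1$, pick an odd prime $p\mid|H'|$; by Cauchy $H'$ has an element of order $p>2$, so by the order match $H$ also has one, giving $p\mid|H|=|H'|+1$ --- impossible, as $p\mid|H'|$. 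Hence $|H'|=1$, i.e.\ $H'=1$ and $H=C_2$. Substituting back, $|K'|=|H|\,|K|/|H'|=2|K|$, and the reflection-count equality $2|H|+|L|-2=2|H'|+|L'|-2$ yields $|L'|=|L|+2$.

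For the ``Moreover'' claim I would track the order-$2$ reflections. In $G_{K'}(L',1)$ there are no diagonal reflections, so all order-$2$ reflections are the nondiagonal $M_{b'}$ with $b'\in L'$; hence the isomorphism carries the nondiagonal reflections $\{M_b:b\in L\}$ of $G_K(L,C_2)$ injectively into $\{M_{b'}:b'\in L'\}$, inducing an injection $\psi:L\to L'$. Because $M_a M_b M_a=M_{ab^{-1}a}=M_{a\circ b}$ and the isomorphism is multiplicative, $\psi$ preserves $\circ$, so $\psi(L)$ is a subreflection system of $L'$ isomorphic to $L$; identifying along $\psi$ exhibits $L$ as a subreflection system of $L'$, and correspondingly $K=\inpro{L}$ as a subgroup of $K'=\inpro{L'}$. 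I expect the main obstacle to be ruling out $|H'|>1$: the constraints $|H|\,|K|=|H'|\,|K'|$ and the matched reflection counts are by themselves consistent with many pairs, and the decisive extra input is the arithmetic of element orders in subgroups of $U(\HH)$ --- specifically that $-1$ is the only involution (bounding the involution count by $1$) and Cauchy's theorem applied to an odd prime divisor of $|H'|$. Getting the bookkeeping of order-$2$ versus higher-order reflections exactly right, so that ``$|H|>|H'|$'' is sharpened to ``$|H|=|H'|+1$'', is where the argument is most delicate.
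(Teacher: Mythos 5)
Your proof is correct and follows essentially the same route as the paper: read off the order $2|H||K|$ and reflection count $2|H|+|L|-2$ from Lemma \ref{GKLHstructure}, match them under the isomorphism to force $H=C_2$, $H'=1$, $|K'|=2|K|$, $|L'|=|L|+2$, and then map the nondiagonal order-$2$ reflections $M_b$ to nondiagonal reflections $M_{b'}$ to obtain an injective $\circ$-preserving map $L\to L'$. The only difference is that where the paper simply asserts $|H|,|H'|\le 2$, you justify this carefully via the uniqueness of the involution $-1$ in $U(\HH)$ together with Cauchy's theorem, which fills in a step the paper leaves implicit.
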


\begin{proof}
For any reflection group $G_K(L,H)$ the nondiagonal reflections (given by $L$) have
order $2$, and the diagonal reflections given by $h\in H$ have order the order of $h$,
and so in our case, we must have $|H|,|H'|\le2$. From the orders of the reflection
groups being equal, we then have
$$ 2|H||K|=2|H'||K'| \Implies {|K'|\over|K|}={|H|\over|H'|}=2
	\Implies H=C_2,\ H'=1, \quad |K'|=2|K|. $$
From the number of reflections in the groups being equal, we then have
$$ 2|H|+|L|-2=2|H'|+|L'|-2 \Implies 
	|L'|=|L|+2. $$
An isomorphism $G_K(L,H)\to G_{K'}(L',H')$ must map the reflections given by $L$ to
the reflections given by $L'$, say
	$$ \pmat{0&b\cr b^{-1}&0}\mapsto \pmat{0&b'\cr (b')^{-1}&0}, \quad b\in L. $$
Since the injective map $L\to L':b\mapsto b'$ preserves $\circ$, $L$ 
	is a reflection subsystem of $L'$, and $K=\inpro{L}$ is a subgroup of $K'=\inpro{L'}$.
\end{proof}

\section{The binary polyhedral groups}
\label{binarypolysect}

We now consider the binary polyhedral groups
\begin{align*}
	\cT &:=\inpro{i,{1+i+j+k\over2}} \quad\hbox{(binary tetrahedral group of order $24$)}, \cr
\cO &:=\inpro{{1+i\over\sqrt{2}},{1+i+j+k\over2}} \quad\hbox{(binary octahedral group of order $48$)}, \cr
	\cI &:=\inpro{i,{1+i+j+k\over2},{1+\tau i+\gs j\over2}},
	\quad\hbox{(binary icosahedral group of order $120$)},
\end{align*}
where 
$$ \tau:={1+\sqrt{5}\over2}, \qquad \displaystyle\gs:={1-\sqrt{5}\over2}. $$
Some insight into these groups is obtained by listing their elements and orders (see the appendix).
Each of these contains the quaternion group 
$$ Q_8=\inpro{i,j}=\{1,-1,i,-i,j,-j,k,-k\}. $$
Since $({1+i\over\sqrt{2}})^2=i$, we have the inclusions
$$ \cT \subset \cO, \qquad \cT\subset\cI. $$

We consider the first example in some detail for the purpose of
illustration.

\begin{example} (Binary tetrahedral group) 
Let $K=\cT$ be the binary tetrahedral group, 
	which
has normal subgroups $H=1,C_2,Q_8,\cT$, and two reflection systems 
(Example \ref{TrefsysExample})
$$ L_{24}^\cT := \cT = L(\{1,i,j,{1+i+j+k\over 2}\}), \qquad
L_{12}^\cT := L(\{1,i,{1+i+j+k\over 2}\}). $$
For the reflection system $L=K=L_{24}^\cT=\cT$,
we have $H_L=Q_8$, and hence
obtain the following reflection groups $G$ in the canonical form
\begin{align*}
G_\cT(\cT,\cT), & \qquad |G|=1152,\quad \hbox{$70$ reflections} \quad
\hbox{(higher order group)},  \cr
G_\cT(\cT,Q_8), & \qquad  |G|=384, \quad \hbox{$38$ reflections} \quad
\hbox{(base group)}.
\end{align*} 
For the reflection system $L=L_{12}^\cT$, the condition
$|H|$ divides $|L|=12$ implies that $H$ could be $1$ or $C_2$.
We have $H_L=1$, and obtain the reflection groups in canonical form
\begin{align*}
G_\cT(L_{12}^\cT,C_2), & \qquad |G|=96,\quad \hbox{$14$ reflections} \quad
\hbox{(higher order group)},  \cr
G_\cT(L_{12}^\cT,1), & \qquad  |G|=48, \quad \hbox{$12$ reflections} \quad
\hbox{(base group)}.
\end{align*}
Since $L_{12}^\cT\subset L_{24}^\cT=\cT$, 
it follows immediately from (\ref{GKLHsinclusion}) that
these four groups satisfy
$$ G_\cT(L_{12}^\cT,1) \subset G_\cT(L_{12}^\cT,C_2) \subset
G_\cT(\cT,Q_8) \subset G_\cT(\cT,\cT). $$
\end{example}

We now make some observations, which apply to our calculations generally.

\begin{itemize}
\item The orders and numbers of reflections are calculated using Lemma \ref{GKLHstructure},
e.g., the reflection group $G=G_\cT(\cT,Q_8)$ has
$$ |G|=2|Q_8||\cT|=384,  \qquad\hbox{and \quad $2|Q_8|+|\cT|-2=38$ reflections}. $$
\item Our method finds all the reflection subgroups of $G_\cT(\cT,\cT)$
	$$ \hbox{{\em which have the canonical form}}. $$
There are also many other reflection subgroups
(not in the canonical form). These include imprimitive reflection groups for 
other choices of $K$, and 
	complex reflection groups
(see Example \ref{noncanonicalsubgroupsExample}).
\item A reflection group can have noncanonical labels, e.g., the base group
for a reflection system $L$, can be indexed by any $H\lhd K$ with $H\subset H_L$.
As an example, $G_\cT(\cT,Q_8)$ has noncanonical labels
$$ G(\cT,\cT,1), \qquad G(\cT,\cT,C_2). $$
\item Whether a reflection subgroup is normal is determined by the 
reflection orbits \cite{W25}. These are the orbits, under conjugation, 
of the subgroups of the reflections for a given root. 
From this theory,
it immediately follows that 
$$ G_\cT(L_{12}^\cT,1) \lhd G_\cT(L_{12}^\cT,C_2), \qquad
G_\cT(\cT,Q_8) \lhd G_\cT(\cT,\cT), $$
and $G_\cT(L_{12}^\cT,C_2)$ is not normal in $G_\cT(\cT,Q_8)$.
\item The number of occurences of a reflection group in canonical form as
a subgroup of $G_K(K,K)$ is given by the size of the equivalence class of
the reflection system, e.g., the reflection groups
$G_\cT(L_{12}^\cT,1)$, $G_\cT(L_{12}^\cT,C_2)$ each appear twelve times
as subgroups of $G_\cT(\cT,\cT)$, as the equivalence class of $L_{12}^\cT$
has size $12$ (Example \ref{TrefsysExample}).
		In particular, $G_K(K,H)\lhd G_K(K,K)$.
\end{itemize}

The base group for the reflection system $L=K$ has the following
general form.

\begin{lemma}
\label{basegroupLeqK}
For the reflection system $L=K$, the base group is given by
$$ H_L = H_K= [K,K]=K^{(1)}=\inpro{\{aba^{-1}b^{-1}:a,b\in K\}} \quad
\hbox{(the commutator subgroup)}. $$
Therefore, the reflection groups in the canonical form for $L=K$ are
	$$ G_K(K,H), \qquad [K,K]\subset H \subset K, $$
where $H$ is a subgroup of $K$ ($[K,K]\subset H$ implies $H$ is normal).
\end{lemma}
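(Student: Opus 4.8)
The plan is to establish the two inclusions $H_L\subseteq[K,K]$ and $[K,K]\subseteq H_L$ for the reflection system $L=K$, and then read off the ``Therefore'' clause from the description of the higher order groups in (\ref{Lgroups}). Throughout I write $M_b:=\pmat{0&b\cr b^{-1}&0}$, so that by (\ref{HLdefn}) the set $H_L$ consists of those $h\in K$ with $\pmat{h&0\cr0&1}\in\inpro{\{M_b\}_{b\in K}}$. For the inclusion $H_L\subseteq[K,K]$ I would introduce the abelianization $\pi:K\to K/[K,K]$ and define a map $\Phi$ on $\inpro{\{M_b\}_{b\in K}}$: every element is diagonal or antidiagonal, hence has exactly two nonzero quaternion entries, and I set $\Phi$ to be $\pi$ applied to the product of those two entries. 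This is unambiguous because $K/[K,K]$ is abelian, so the order of the two entries is irrelevant. A short case check (diagonal$\times$diagonal, diagonal$\times$antidiagonal, and the antidiagonal cases, using the multiplication rule $M_aM_{b^{-1}}=\pmat{ab&0\cr0&a^{-1}b^{-1}}$ of (\ref{productoftwo}) and its antidiagonal analogues) shows $\Phi$ is a homomorphism. Since $\Phi(M_b)=\pi(b\,b^{-1})=1$ for every $b$ and the $M_b$ generate the group, $\Phi$ is trivial; applying it to $\pmat{h&0\cr0&1}$ gives $\pi(h)=1$, that is $h\in[K,K]$.

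For the reverse inclusion I would exhibit each commutator explicitly. From $M_{c_1}M_{c_2}=\pmat{c_1c_2^{-1}&0\cr0&c_1^{-1}c_2}$, a product of four factors with $c_3=1$ and $c_4=c_2^{-1}c_1$ collapses to $\pmat{c_1c_2^{-1}c_1^{-1}c_2&0\cr0&1}$, whose top-left entry is the commutator $c_1c_2^{-1}c_1^{-1}c_2=[c_1,c_2^{-1}]$. As $c_1,c_2$ range over $K=L$, these run over all commutators $aba^{-1}b^{-1}$, which generate $[K,K]$. Since $H_L$ is a subgroup of $K$ (if $\pmat{h&0\cr0&1}$ and $\pmat{h'&0\cr0&1}$ lie in the group, so do their product $\pmat{hh'&0\cr0&1}$ and their inverses) and it contains every such commutator, it contains all of $[K,K]$. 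Combined with the first step this yields $H_L=[K,K]$.

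The ``Therefore'' clause then follows from the general method: the canonical-form reflection groups for a reflection system $L$ are exactly the $G_K(L,H)$ with $H_L\subseteq H\subseteq L$, $H\lhd K$, and $LH=L$. For $L=K$ the condition $LH=L$ is automatic, and canonicity is automatic because $L_G\subseteq K=L$ always holds by (\ref{LGform}) while $L_G\supseteq L$, forcing $L_G=L$. Finally any $H$ with $[K,K]=H_L\subseteq H\subseteq K$ is automatically normal, since $K/[K,K]$ is abelian and subgroups containing the commutator subgroup correspond to normal subgroups of $K$. Hence the list is precisely $G_K(K,H)$ with $[K,K]\subseteq H\subseteq K$. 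The only genuinely delicate point I anticipate is the well-definedness and homomorphism property of $\Phi$: one must confirm that collapsing each matrix to the abelianized product of its two nonzero entries really respects all four products of diagonal/antidiagonal pairs. Everything else—the explicit commutator realization and the normality of $H$—is routine.
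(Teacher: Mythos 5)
Your proof is correct. The inclusion $[K,K]\subseteq H_K$ is established exactly as in the paper: both arguments exhibit a product of four antidiagonal reflections equal to $\diag(aba^{-1}b^{-1},1)$ (the paper uses $M_{ab}M_{a}M_{b^{-1}}M_{1}$, you use $M_{c_1}M_{c_2}M_{1}M_{c_2^{-1}c_1}$ --- the same computation up to reparametrisation), then uses that $H_K$ is closed under products. Where you genuinely diverge is the reverse inclusion $H_K\subseteq[K,K]$. The paper gets this almost for free from the observation following (\ref{HLdefn}) that $H_L\subset H$ for every canonical label, applied to the group $G(K,K,[K,K])$; that route implicitly leans on Lemma \ref{GKLHstructure} (via the automorphism $\ga$ of (\ref{ga-auto-defn})) to know that the diagonal reflections of $G_K(K,[K,K])$ are exactly those given by $[K,K]$. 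You instead build an explicit homomorphism $\Phi$ from $\inpro{\{M_b\}_{b\in K}}$ to the abelianisation $K/[K,K]$, sending a monomial matrix to the class of the product of its two nonzero entries; I checked the four diagonal/antidiagonal pairings and each reduces to $\pi(acbd)=\pi(abcd)$ in the abelian quotient, so $\Phi$ is well defined and multiplicative, it kills every generator $M_b$ since $\pi(bb^{-1})=1$, and evaluating on $\diag(h,1)$ forces $h\in[K,K]$. Your version is more self-contained and makes the minimality of $H_K$ completely explicit, at the cost of introducing the auxiliary map; the paper's version is shorter but leaves that minimality largely implicit in the surrounding framework. The ``Therefore'' clause is handled the same way in both: $L_G=K$ is automatic, $LH=L$ is automatic, and normality of any $H\supseteq[K,K]$ is standard.
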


\begin{proof} The group generated by the reflections given by $L=K$ contains
$$ \pmat{0&ab\cr(ab)^{-1}&0} \pmat{0&a\cr a^{-1}&0} \pmat{0&b^{-1}\cr b&0}
\pmat{0&1\cr1&0} = \pmat{aba^{-1}b^{-1}&0\cr0&1}, \quad a,b\in K, $$
so that
$$ aba^{-1}b^{-1}\in H_K \Implies [K,K]\subset H_K. $$

	Any subgroup 
	$H$ that contains $[K,K]$
	is normal in $K$, and hence gives a reflection group for $L=K$ 
	in the canonical form. In particular, by taking $H=[K,K]$, we
	conclude that $H_K=[K,K]$.
\end{proof}

\begin{example} 
\label{noncanonicalsubgroupsExample}
The reflection group $G_{\cT}(\cT,\cT)$ has reflection subgroups
	$H$ which are not in the canonical form (for $K=\cT$), including
\begin{align*}
H=\inpro{\pmat{i&0\cr0&1},\pmat{j&0\cr0&1},\pmat{0&i\cr-i&0}}=G_{Q_8}(Q_8,Q_8), 
	&\qquad |H|=128, \quad\hbox{$22$ reflections}, \cr
H=\inpro{\pmat{{1+i+j+k\over2}&0\cr0&1},\pmat{0&i\cr-i&0}}\cong G(6,1,2), 
&\qquad |H|=72, \quad\hbox{$16$ reflections}, \cr
H=\inpro{\pmat{j&0\cr0&1},\pmat{0&1\cr1&0},\pmat{0&i\cr-i&0}}=G_{Q_8}(Q_8,C_4),
&\qquad |H|=64, \quad\hbox{$14$ reflections}.
\end{align*}
\end{example}


\begin{example} (Binary octahedral group) Let $K=\cO$ be the binary octahedral group,
which has normal subgroups $H=1,C_2,Q_8,\cT,\cO$. Elementary calculations,
show that there are five reflection systems of sizes $14$, $18$, $20$, $32$, $48$, 
given by 
\begin{align*}
L_{48}^\cO &:= \{ 1, {1+i\over\sqrt{2}},{1+j\over\sqrt{2}},{1+i+j+k\over2} \},
\quad\hbox{($1$ copy)}\cr
L_{32}^\cO &:= \{ 1,{1+i\over\sqrt{2}}, j, {1+i+j+k\over2} \},
\quad\hbox{($4$ copies)}\cr
L_{20}^\cO &:= \{ 1,{1+i\over\sqrt{2}},{1+i+j+k\over 2},{j-k\over\sqrt{2}} \}, 
\quad\hbox{($10$ copies)}\cr
L_{18}^\cO &:= \{ 1, {1+i\over\sqrt{2}}, {1+i+j+k\over2} \}, 
\quad\hbox{($9$ copies)}\cr
L_{14}^\cO &:= \{1,i,{1+i+j+k\over2},{j-k\over\sqrt{2}} \},
\quad\hbox{($7$ copies)}
\end{align*}
For these reflection systems, the base groups are
\begin{align*}
G_\cO(\cO,\cT), \qquad & |G|=2304, \quad\hbox{$94$ reflections}, \cr
G_\cO(L_{32}^\cO,Q_8), \qquad & |G|=768, \quad\hbox{$46$ reflections}, \cr
G_\cO(L_{20}^\cO,C_2), \qquad & |G|=192, \quad\hbox{$22$ reflections}, \cr
G_\cO(L_{18}^\cO,1), \qquad   & |G|=96, \quad\hbox{$18$ reflections}, \cr
G_\cO(L_{14}^\cO,1), \qquad   & |G|=96, \quad\hbox{$14$ reflections},
\end{align*}
and there is just one higher order group
$$ G_\cO(\cO,\cO), \qquad |G|=4608, \quad\hbox{$142$ reflections}. $$
\end{example}

The inclusions between the reflection systems given above 
(which are mostly obvious from their definitions) are given in Figure \ref{Linclusionsfigure}. We also 
observe that
\begin{equation}
\label{L18L14intersection}
L_{18}^\cO \cap  L_{14}^\cO = L_{12}^\cT, \qquad
L_{14}^\cO = L_{12}^\cT \cup \{ {j-k\over\sqrt{2}},{k-j\over\sqrt{2}}\}.
\end{equation}
In view of the group orders and number of reflections, 
there is one possible isomorphism between the reflection groups for $\cT$ and $\cO$.

\begin{example}
\label{isomorphismI}
By \ref{L18L14intersection}), 
there is an isomorphism $G_\cO(L_{14}^\cO,1)\to G_\cT(L_{12}^\cT,C_2)$ given by
$$ \pmat{0&{j-k\over\sqrt{2}}\cr{k-j\over\sqrt{2}}&0}\mapsto\pmat{-1&0\cr0&1},
\qquad
\pmat{0&b\cr b^{-1}&0}\mapsto \pmat{0&b\cr b^{-1}&0}, \quad 
	b\in L_{12}^\cT=  \{1,i,{1+i+j+k\over2}\}. $$
\end{example}

\begin{example} (Binary icosahedral group) Let $K=\cI$ be the binary icosahedral group,
which has normal subgroups $H=1,C_2,\cI$. Elementary calculations,
show that there are four reflection systems of sizes $20$, $30$, $32$, $120$,
given by
\begin{align*}
L_{120}^\cI &:= \{ 1, i, {1+i+j+k\over2}, {\tau+\gs i-j\over2} \},
\quad\hbox{($1$ copy)}\cr
	L_{32}^\cI &:= \{ 1, {1+i+j+k\over2}, {\tau+\gs i-j\over2},{j-\tau i-\gs k\over2} \}, \quad\hbox{($16$ copies)}\cr
L_{30}^\cI &:= \{ 1, {1+i+j+k\over2}, {\tau+\gs i-j\over2} \}, \quad\hbox{($15$ copies)}\cr
	L_{20}^\cI &:= \{ 1, i, {1+i+j+k\over2},{i+\gs j+\tau k\over2} \}, 
	\quad\hbox{($10$ copies)}.
\end{align*}
For these reflection systems, the base groups are
\begin{align*}
G_\cI(\cI,\cI), \qquad & |G|=28800, \quad\hbox{$358$ reflections}, \cr
G_\cI(L_{32}^\cI,C_2), \qquad & |G|=480, \quad\hbox{$34$ reflections}, \cr
G_\cI(L_{30}^\cI,1), \qquad & |G|=240, \quad\hbox{$30$ reflections}, \cr
G_\cI(L_{20}^\cI,1), \qquad   & |G|=240, \quad\hbox{$20$ reflections},
\end{align*}
and there is one higher order group
$$ G_\cI(L_{20}^\cI,C_2), \qquad   |G|=480, \quad\hbox{$22$ reflections}. $$
\end{example}

There are no further isomorphisms, and so we count $14$ quaternionic reflection groups 
corresponding to the groups $\cT,\cO,\cI$, as in \cite{C80} (which has a few typos).
Taylor \cite{T25} (and in personal correspondence) observes that some imprimitive 
quaternionic reflection groups are conjugate to primitive complex reflection groups, i.e.,
\begin{equation}
\label{primitiveimprimitiveisos}
G_{\cT}(L_{12}^\cT,1)\cong G_{12}, \qquad
G_{\cO}(L_{18}^\cO,1)\cong G_{13}, \qquad
G_{\cI}(L_{30}^\cI,1)\cong G_{22}, 
\end{equation}
and therefore counts $11$ quaternionic reflection groups. 

In Table \ref{TOIgroups-table}, we summarise our classification of the 
reflection groups for $\cT,\cO,\cI$. This includes the reflection orbits for $G$, 
using the notation $n_1 R_{a_1},\ldots, n_m R_{a_m}$ of \cite{W25}, which we 
now explain. Let $R_a$ be the reflection subgroup generated by the reflections 
with root $a$.  
This consists of all reflections with root $a$ and the identity. The reflection type 
is the set of orbits of the reflection subgroups $R_a$ under the conjugation 
action of the reflection group $G$. 
This provides more nuanced information than number of reflections and 
their orders,
and can be used to distinguish reflection groups in which these are equal,
but the groups are not isomorphic. 
The notation $n_j R_{a_j}$ indicates that the orbit of $R_{a_j}$ is of size $n_j$, 
and usually just the abstract type of $R_{a_j}$ is recorded. For our groups
$G_K(L,H)$ we observe that diagonal matrices are conjugated to diagonal matrices
(and similarly for nondiagonal matrices), so that the diagonal reflections form
a single orbit
$$ 2H =  \bigl\{\pmat{H&0\cr0&1},\pmat{1&0\cr0&H}\bigr\}, $$
whilst the order two reflections corresponding to $b\in L$ give
orbits of the form $n C_2$.

For the base group $G=G_K(L,L_H)$, the orbit of a nondiagonal reflection
is given by
$$ \pmat{0&b\cr b^{-1}&0}^G=
\bigl\{ \pmat{0&c\cr c^{-1}}: c\in\Orb(L,b)\bigr\}, \qquad b\in L, $$
where the {\bf orbit} of $b\in L$ is
$$ \Orb(L,b) := \hbox{the closure of $\{b\}$ under 
$x\mapsto a\circ x= ax^{-1}a$, $a\in L$}, $$
and the reflection type is $mC_2$, $m=|\Orb(L,b)|$.
The orbits give a partition of $L$, and if $L$ is generated by $b_1,\ldots,b_m$,
then its orbits are $\Orb(L,b_1),\ldots,\Orb(L,b_m)$ (with some possibly being equal to
each other). The orbit of a nondiagonal matrix under a higher group for $L$ is given
by a union of the orbits for the base group.

\begin{table}[!h]
\caption{The imprimitive reflection groups $G_K(L,H)=\cG(\cL,\cH)$ obtained from the
reflection systems $L$ for $K=\cT,\cO,\cI$.
The base groups have $\cH=\{\}$, and the $\cL$ given is
a generating set for the corresponding reflection system.
The only isomorphism is $G_\cT(L_{12}^\cT,C_2)\cong G_\cO(L_{14}^\cO,1)$,
	and so there are $14$ groups in total. }
	\vskip0.3truecm
\label{TOIgroups-table}
\begin{tabular}{ |  >{$}l<{$} | >{$}c<{$} | >{$}l<{$} | >{$}l<{$} | >{$}l<{$} | >{$}l<{$} | >{$}l<{$} | >{$}l<{$} |}
\hline
	&&&&&&&\\[-0.3cm]
K & |L| & H & \hbox{order} & \hbox{refs} & \hbox{reflection orbits} &
        \cL & \cH \\[0.1cm]
\hline
&&&&&&&\\[-0.3cm]
\cT & 24 & \cT & 1152 & 70 & 2\cT,24C_2 & \{1,i\} & \{{1+i+j+k\over2}\} \\
\cT & 24 & Q_8 & 384 & 38 & 2Q_8,24C_2 & \{1,i,j,{1+i+j+k\over2}\} & \{\} \\
\cT & 12 & C_2 & 96 & 14 & 2C_2,12C_2 & \{1,i,{1+i+j+k\over2}\} & \{-1\} \\
\cT & 12 & 1 & 48 & 12 & 12C_2 & \{1,i,{1+i+j+k\over2}\} & \{\} \\
&&&&&&&\\[-0.3cm]
\cO & 48 & \cO & 4608 & 142 & 2\cO,48C_2 & 
	\{1,{1+i+j+k\over2} \} & \{{1+i\over\sqrt{2}}\} \\
\cO & 48 & \cT & 2304 & 94 & 2\cT,24C_2,24C_2 & 
	\{1, {1+i\over\sqrt{2}},{1+j\over\sqrt{2}}, {1+i+j+k\over2} \} & \{\} \\
\cO & 32 & Q_8 & 768 & 46 & 2Q_8,8C_2,24C_2 & \{1,{1+i\over\sqrt{2}},j,{1+i+j+k\over2}\} & \{\} \\
\cO & 20 & C_2 & 192 & 22 & 2C_2,2C_2,6C_2,12C_2 & \{1,{1+i\over\sqrt{2}},{1+i+j+k\over2},{j-k\over\sqrt{2}}\} & \{\} \\
\cO & 18 & 1 & 96 & 18 & 6C_2,12C_2 & \{1,{1+i\over\sqrt{2}},{1+i+j+k\over2}\} & \{\} \\
\cO & 14 & 1 & 96 & 14 & 2C_2,12C_2 & \{1,i,{1+i+j+k\over2},{j-k\over\sqrt{2}}\} & \{\} \\
&&&&&&&\\[-0.3cm]
\cI & 120 & \cI & 28800 & 358 & 2\cI,120C_2 & \{1,i,{1+i+j+k\over2},{\tau+\gs i-j\over2}\} & \{ \} \\
	\cI & 32 & C_2 & 480 & 34 & 2C_2,2C_2,30C_2 & \{1,{1+i+j+k\over2},{\tau+\gs i-j\over2},{j-\tau i-\gs k\over2}\} & \{ \} \\
\cI & 30 & 1 & 240 & 30 & 30C_2 & \{1,{1+i+j+k\over2},{\tau+\gs i-j\over2}\} & \{ \} \\
\cI & 20 & C_2 & 480 & 22 & 2C_2,20C_2 & \{1,i,{1+i+j+k\over2},{i+\gs j+\tau k\over2}\} & \{-1 \} \\
\cI & 20 & 1 & 240 & 20 & 20C_2 & \{1,i,{1+i+j+k\over2},{i+\gs j+\tau k\over2}\} 
	& \{ \} \\ [0.1cm]
\hline
\end{tabular}
\end{table}


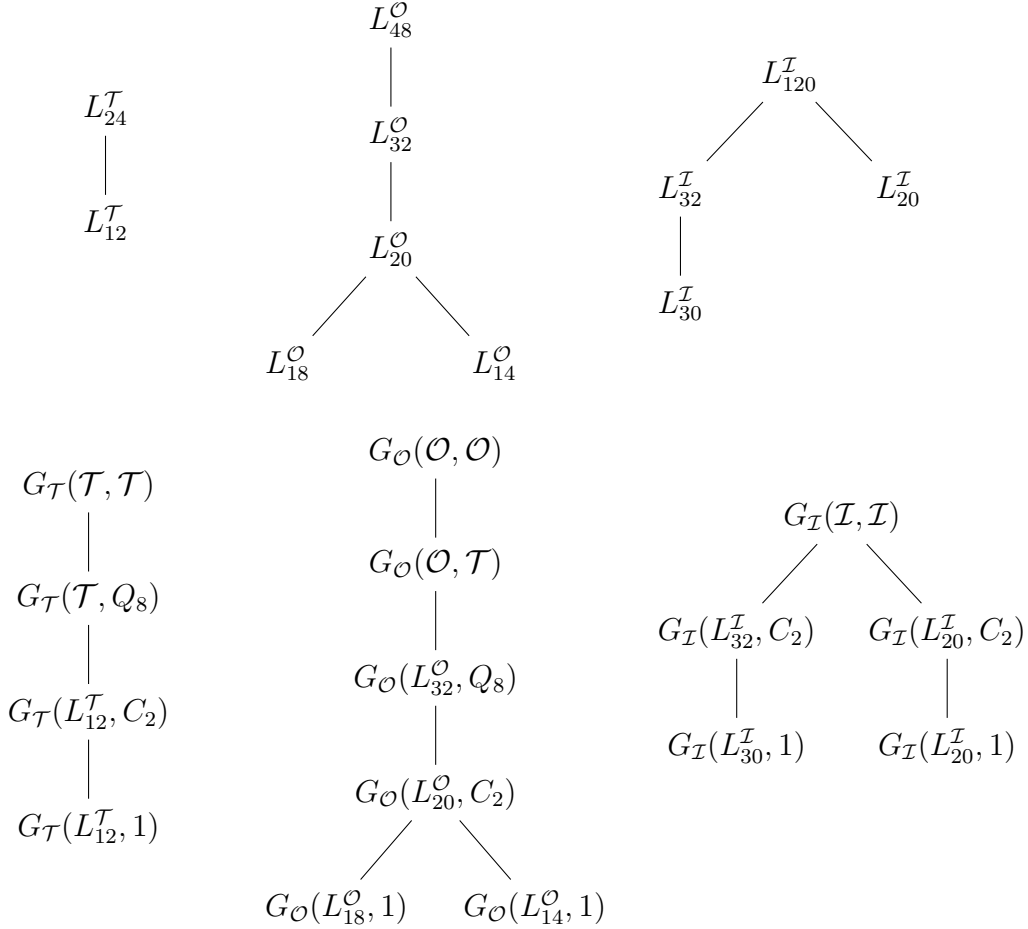
\begin{figure}[!h]
\caption{The inclusions between the reflection systems for $\cT,\cO,\cI$,
	and the inclusions between the reflection groups that correspond to them.}
\label{LTOIinclusionsfigure}
\begin{center}
\begin{tabular}[t]{ p{2.0truecm} p{4.8truecm} p{4.8truecm} }
\begin{tikzpicture}
\matrix (A) [matrix of nodes, row sep=0.8cm, column sep = 0.0 cm]
    {
            $L_{24}^\cT$  \\
            $L_{12}^\cT$  \\
	    $ $ \\
	     \\
    };
    \draw (A-1-1)--(A-2-1);
\end{tikzpicture}
 & 
\begin{tikzpicture}
    \matrix (A) [matrix of nodes, row sep=0.8cm, column sep = 0.5 cm]
    {
            & $L_{48}^\cO$ &  \\
            & $L_{32}^\cO$ &  \\
            & $L_{20}^\cO$ &  \\
            $L_{18}^\cO$ && $L_{14}^\cO$ \\
    };
    \draw (A-1-2)--(A-2-2);
    \draw (A-2-2)--(A-3-2);
    \draw (A-3-2)--(A-4-1);
    \draw (A-3-2)--(A-4-3);
\end{tikzpicture}
	  & 
		\begin{tikzpicture}
    \matrix (A) [matrix of nodes, row sep=0.8cm, column sep = 0.5 cm]
    {
            & $L_{120}^\cI$ &  \\
            $L_{32}^\cI$ && $L_{20}^\cI$  \\
            $L_{30}^\cI$ &&  \\
            \\
    };
    \draw (A-1-2)--(A-2-1);
    \draw (A-1-2)--(A-2-3);
    \draw (A-2-1)--(A-3-1);
\end{tikzpicture}
\end{tabular}
\begin{tabular}[t]{ p{3.0truecm} p{4.8truecm} p{5.8truecm} }
\begin{tikzpicture}
\matrix (A) [matrix of nodes, row sep=0.8cm, column sep = 0.0 cm]
    {
	    $G_\cT(\cT,\cT)$  \\
	    $G_\cT(\cT,Q_8)$  \\
	    $G_\cT(L_{12}^\cT,C_2)$  \\
	    $G_\cT(L_{12}^\cT,1)$  \\
	    $ $ \\
    };
    \draw (A-1-1)--(A-2-1);
    \draw (A-2-1)--(A-3-1);
    \draw (A-3-1)--(A-4-1);
\end{tikzpicture}
 & 
\begin{tikzpicture}
    \matrix (A) [matrix of nodes, row sep=0.8cm, column sep = -1.0 cm]
    {
	    & $G_\cO(\cO,\cO)$ &  \\
	    & $G_\cO(\cO,\cT)$ &  \\
	    & $G_\cO(L_{32}^\cO,Q_8)$ &  \\
	    & $G_\cO(L_{20}^\cO,C_2)$ &  \\
	    $G_\cO(L_{18}^\cO,1)$ && $G_\cO(L_{14}^\cO,1)$ \\
    };
    \draw (A-1-2)--(A-2-2);
    \draw (A-2-2)--(A-3-2);
    \draw (A-3-2)--(A-4-2);
    \draw (A-4-2)--(A-5-1);
    \draw (A-4-2)--(A-5-3);
\end{tikzpicture}
	  & 
		\begin{tikzpicture}
    \matrix (A) [matrix of nodes, row sep=0.8cm, column sep = -0.7 cm]
    {
	    & $G_\cI(\cI,\cI)$ &  \\
			$G_\cI(L_{32}^\cI,C_2)$ && $G_\cI(L_{20}^\cI,C_2) $  \\
			$G_\cI(L_{30}^\cI,1)$ && $G_\cI(L_{20}^\cI,1) $  \\
	    $ $ \\
	    $ $ \\
    };
    \draw (A-1-2)--(A-2-1);
    \draw (A-1-2)--(A-2-3);
    \draw (A-2-1)--(A-3-1);
    \draw (A-2-3)--(A-3-3);
\end{tikzpicture}
\end{tabular}
\end{center}
\end{figure}

\vfil\eject

\section{The dicyclic (binary dihedral) groups}

We now consider the dicyclic (binary dihedral) groups
$$ \cD_n := \inpro{\go,j}, \qquad \go:=\zeta_{2n} = e^{\pi i\over n}, \qquad n\ge 2,
$$ 
where $\go$ is a primitive $2n$-th root of unity.
This group has $4n$ has elements, of two types
\begin{equation}
\label{Dntwotypes}
\go^m, \qquad \go^\ell j=j\go^{-\ell}, \qquad 1\le m,\ell \le 2n.
\end{equation}
We observe that $\go\mapsto\go$, $j\mapsto\go j$ defines an automorphism of $\cD_n$.

The group $\cD_2$ is the quaternion group $Q_8$, which has a slightly special 
structure, since $\go=i$, so that $i$, $j$, $k$ play the same role.
For $n\ge2$, the normal subgroups of $\cD_n$ are
\begin{equation}
\label{DnNormalI}
C_r=\inpro{\go^{2n\over r}}, \quad r\divides 2n, \qquad
\qquad \cD_n,
\end{equation}
and, for $n\ne 2$ even,  
there are two additional nonabelian normal subgroups of order $2n$
\begin{equation}
\label{DnNormalneven}
\cD_{n/2}=\inpro{\go^2,j}, \qquad \inpro{\go^2,\go j}.
\end{equation}
In view of the automorphism $\go\mapsto\go$, $j\mapsto\go j$, which maps the first
group of (\ref{DnNormalneven}) to the second, it suffices to consider just $\cD_{n/2}$.
For $\cD_2=Q_8$, the normal subgroups are those of (\ref{DnNormalI}) and
$\inpro{j}$, $\inpro{k}$ (which are abelian).

To understand what subsets of (\ref{Dntwotypes}) might generate 
reflection systems, we will use Lemma \ref{Ageneratelemma} 
and the following general property.

\begin{lemma} For $x,y\in K$, their closure under $\circ$ contains 
the following elements
\begin{equation}
\label{xyinvformula}
(xy^{-1})^nx \in L(\{x,y\}), \qquad n=0,1,2,\ldots.
\end{equation}
\end{lemma}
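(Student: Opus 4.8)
The plan is to parametrise the target family by $z_n := (xy^{-1})^n x$, so that the claim becomes precisely $z_n \in L(\{x,y\})$ for all $n \ge 0$, and then to generate these elements from the two generators by a recursion built entirely from the operation $\circ$. The guiding idea is that $\circ$ should act on the subscript of $z_n$ in a simple affine way, which would reduce the whole statement to an elementary induction on $n$.

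First I would record the single identity that drives everything. Using $z_n^{-1} = x^{-1}(yx^{-1})^n$ together with $(yx^{-1})^n = (xy^{-1})^{-n}$, a direct cancellation gives, for all integers $m,n$,
$$ z_m \circ z_n = z_m z_n^{-1} z_m = (xy^{-1})^m x \cdot x^{-1}(yx^{-1})^n \cdot (xy^{-1})^m x = (xy^{-1})^{2m-n} x = z_{2m-n}. $$
In other words, $\circ$ realises the index map $(m,n)\mapsto 2m-n$. Two base cases are then immediate: $z_0 = x$ is a generator and so lies in $L(\{x,y\})$, while $z_1 = xy^{-1}x = x \circ y$ is a single application of $\circ$ to the generators and so also lies in $L(\{x,y\})$.

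The induction then runs cleanly. Taking $m=n$ and replacing the second argument by $n-1$ in the identity gives $z_n \circ z_{n-1} = z_{2n-(n-1)} = z_{n+1}$, so whenever $z_{n-1}$ and $z_n$ both lie in $L(\{x,y\})$, so does $z_{n+1}$; starting from the base cases $z_0,z_1$ this yields $z_n \in L(\{x,y\})$ for every $n\ge 0$, which is the claim. I expect the only real obstacle to be spotting the closed-form index rule $z_m \circ z_n = z_{2m-n}$ (equivalently, the two-term recursion $z_{n+1} = z_n \circ z_{n-1}$); once it is in hand the verification is a one-line cancellation and the induction is routine. As a sanity check one may also note the specialisations $z_n \circ x = z_{2n}$ and $z_n \circ y = z_{2n+1}$, which give an alternative derivation by reading off the binary digits of $n$.
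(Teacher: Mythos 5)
Your proof is correct. The key identity $z_m \circ z_n = z_{2m-n}$ for $z_n := (xy^{-1})^n x$ is verified by exactly the cancellation you describe, the base cases $z_0 = x$ and $z_1 = x\circ y$ are right, and the two-term recursion $z_{n+1} = z_n \circ z_{n-1}$ then gives an ordinary induction that covers all $n\ge 0$. The paper's route is slightly different: it runs a strong induction on $n$, writing $n=2k$ or $n=2k+1$ with $k<n$ and using only the specialisations you mention at the end, namely $z_k \circ x = z_{2k}$ and $z_k \circ y = z_{2k+1}$ (so every step applies $\circ$ with one of the two original generators as the right-hand argument, and the element is built from the binary digits of $n$). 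Your version buys a little more: the general index rule $z_m\circ z_n = z_{2m-n}$, valid for all integers $m,n$, shows at once that the whole family $\{z_n\}_{n\in\ZZ}$ is closed under $\circ$, which is essentially the observation the paper re-derives later when computing the reflection systems $L^{(n)}_{(a,b)}$ for the dicyclic groups (the calculations $\go^{m_1 a}\circ\go^{m_2 a}=\go^{(2m_1-m_2)a}$ and so on). The paper's version is marginally more economical in that it only ever composes with the generators themselves. Either argument is complete; there is no gap in yours.
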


\begin{proof} We prove this by strong induction on $n$.
	The $n=0,1$ cases are immediate
$$ x\circ x=xx^{-1}x=x, \qquad x\circ y= xy^{-1}x=(xy^{-1})x. $$
Since $n=2k$ or $n=2k+1$, with $k<n$, we have
\begin{align*}
& (xy^{-1})^kx \circ x = (xy^{-1})^kx x^{-1}(xy^{-1})^kx=(xy^{-1})^{2k}x, \cr
& (xy^{-1})^kx\circ y = (xy^{-1})^kx y^{-1}(xy^{-1})^kx=(xy^{-1})^{2k+1}x,
\end{align*}
which completes the induction. 
\end{proof}

Since the elements of the two types (\ref{Dntwotypes}) are closed under $\circ$, i.e.,
$$ \go^a \circ \go^b = \go^a\go^{-b}\go^a = \go^{2a-b},\qquad
\go^a j \circ \go^b j =  \go^a j (-\go^b j) \go^a j = \go^{2a-b}j, $$
a generating set for a reflection system for $K=\cD_n$ must contain at least one of each.
By taking $x=1$ in (\ref{xyinvformula}), it follows that
$$ L(\{1,\go^{a_1},\ldots,\go^{a_m}\}) $$
is the cyclic group generated by $\go^{a_1},\ldots,\go^{a_m}$, and so we 
can suppose, without loss of generality, 
that a generating set for a reflection system has the form
$$ A= \{1,\go^a,\go^{b_1}j,\ldots, \go^{b_\ell}j\}. $$
By Lemma \ref{Ageneratelemma}, generators for an equivalent reflection system are
$$ A(\go^{b_1}j)^{-1}= \{-\go^{b_1}j,-\go^{a+b_1}j,1,\go^{b_2-b_1},\ldots,\go^{b_\ell-b_1}\}, $$
and so, by the previous reasoning, one may suppose the generating set is
$$ \{-\go^{b_1}j,-\go^{a+b_1}j,1,\go^{b_2-b_1}\} (\go^{b_1}j)
=\{1,\go^a, \go^{b_1}j,\go^{b_2}j\}. $$
We now consider conjugation of the above set by $\go^\gga$. Since
$$ \go^\gga(\go^a)\go^{-\gga}=\go^a, \qquad
\go^\gga(\go^b j)\go^{-\gga} = \go^{2\gga+b}j, $$
we can suppose that the generating set for a reflection system for $K=\cD_n$ has one
of the forms
$$ \{1,\go^a,j,\go^b j\}, \qquad \{1,\go^a,\go j,\go^{b+1} j\}. $$
Since the automorphism $\go\mapsto\go$, $j\mapsto \go j$ of $\cD_n$ maps the
first to the second, it suffices to consider just the first.
The set of indices $(a,b)$ giving the desired reflection systems is
\begin{align}
\label{Andefn}
\gO_n  & := \{(a,b):1\le a\le b\le n,\ a\divides n,\  b\divides n,\ \gcd(a,b)=1\} \cr
&= \{({n\over x},{n\over y}): 1\le y\le x\le n,\ \lcm(x,y)=n\}.
\end{align}
For example,
$$ \gO_6 = \{(1,1),(1,2),(1,3),(1,6),(2,3)\}, \qquad |\gO_6|=5. $$
The size of this set $|\gO_n|$ is the sequence A018892 in on-line encyclopedia of
integer sequences, where several formulas for it are given, including
$$ |\gO_n| = {1\over2}\Bigl(\prod_j (2\ga_j+1)+1\Bigr), \qquad n=\prod_j p_j^{\ga_j}\quad
\hbox{(prime factorisation)}, $$
Thus the size of $\gO_n$ grows with the number of prime factors of $n$ and their
multiplicities.

We now have a key technical result.

\begin{lemma}
The reflection systems for the dicyclic group $\cD_{n}$ are (up to equivalence)
\begin{align}
\label{Labndefn}
L_{(a,b)}^{(n)}  & := L(\{1,\go^a,j,\go^b j\}) \cr
&\ =\{\go^{ma}\}_{1\le m\le {2n\over a}}\cup \{\go^{\ell b}j\}_{1\le \ell\le {2n\over b}}, 
\qquad (a,b)\in \gO_n. 
\end{align}
Each of these has a different number of elements, which is given by
\begin{equation}
\label{Labnsize}
|L_{(a,b)}^{(n)}|={2n\over a}+{2n\over b}.
\end{equation}
\end{lemma}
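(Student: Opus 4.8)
The plan is to establish two things for each pair $(a,b)\in\gO_n$: first, that the set $L(\{1,\go^a,j,\go^b j\})$ equals the explicit union $\{\go^{ma}\}\cup\{\go^{\ell b}j\}$ displayed in (\ref{Labndefn}), and second, that these exhaust all reflection systems up to equivalence. Much of the groundwork is already laid by the preceding discussion: the normalisation argument using Lemma \ref{Ageneratelemma} and conjugation by $\go^\gga$ has already reduced any generating set to the form $\{1,\go^a,j,\go^b j\}$, so the task is really to pin down exactly which $(a,b)$ arise and to compute the resulting closure.

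First I would compute the closure of $\{1,\go^a,j,\go^b j\}$ under $\circ$. Using the two identities $\go^s\circ\go^t=\go^{2s-t}$ and $\go^s j\circ\go^t j=\go^{2s-t}j$ noted before the lemma, the closure splits into a ``rotation part'' (powers of $\go$) and a ``reflection part'' (elements times $j$), and these two pieces interact: a product $\go^s\circ(\go^t j)=\go^{2s-t}j$ moves a rotation and a reflection to a new reflection, while $(\go^s j)\circ(\go^t j)=\go^{2s-t}j$ and $(\go^s j)\circ\go^t = \go^s j\,\go^{-t}\go^s j = -\go^{2s+t}\cdots$ feed back into rotations. The cleanest route is to apply (\ref{xyinvformula}) with $x=1,\ y=\go^{-a}$ to generate the whole cyclic group $\inpro{\go^a}=\{\go^{ma}\}$, and likewise with the pair $j,\go^b j$ (after left-multiplying to reduce to the cyclic case) to generate all $\{\go^{\ell b}j\}$; one then checks the union is already closed under $\circ$, so no further elements appear. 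Closure under $\circ$ of the union is a direct verification: $\go^{ma}\circ\go^{m'a}=\go^{(2m-m')a}\in\inpro{\go^a}$, $\go^{\ell b}j\circ\go^{\ell' b}j=\go^{(2\ell-\ell')b}j$, and the mixed products land back in the union precisely because the exponents stay in the lattices $a\ZZ$ and $b\ZZ$ modulo $2n$. The size formula (\ref{Labnsize}) is then immediate: the cyclic part $\inpro{\go^a}$ has order $2n/a$ and the reflection coset has $2n/b$ elements.

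The arithmetic conditions defining $\gO_n$ are where I would concentrate the real argument, and this is the main obstacle. I must show that $K=\inpro{L}=\cD_n$ forces $a\divides n$, $b\divides n$, and $\gcd(a,b)=1$, and conversely that each such $(a,b)$ genuinely gives a reflection system. The rotation subgroup generated is $\inpro{\go^a}=\inpro{\go^{\gcd(a,2n)}}$, and from the reflection elements $j,\go^b j$ one recovers $\go^b=(j)^{-1}(\go^b j)$, so $\inpro{L}$ contains $\inpro{\go^{\gcd(a,b,2n)}}$ together with $j$; for this to be all of $\cD_n$ one needs $\gcd(a,b,2n)=1$, and a short analysis of which powers of $\go$ lie in the reflection coset forces the stronger divisibility $a\divides n$, $b\divides n$ together with $\gcd(a,b)=1$. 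The subtlety is handling the factor $2$ carefully (since $\go$ is a primitive $2n$-th root, not an $n$-th root), distinguishing when a putative generator actually produces $\cD_n$ rather than a proper dicyclic or cyclic subgroup; this is exactly the place where the equivalence reductions and the automorphism $\go\mapsto\go,\ j\mapsto\go j$ must be invoked to avoid double-counting and to guarantee the canonical representative has $1\le a\le b\le n$ with $a,b\divides n$. Finally, the claim that distinct $(a,b)\in\gO_n$ give reflection systems of distinct sizes follows from (\ref{Labnsize}): since $a,b\divides n$ and $\gcd(a,b)=1$, the unordered sizes $2n/a+2n/b$ are determined by $\{a,b\}$, and the constraint $a\le b$ makes the correspondence injective.
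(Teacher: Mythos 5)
Your route is essentially the paper's: normalise the generating set to $\{1,\go^a,j,\go^b j\}$, use (\ref{xyinvformula}) to produce the two arithmetic progressions of exponents, check closure under $\circ$ directly, and read off the size formula. But two real pieces of the argument are missing. The first is the claim that distinct $(a,b)\in\gO_n$ give different sizes. Your closing sentence --- that the size is ``determined by $\{a,b\}$'' and that ``the constraint $a\le b$ makes the correspondence injective'' --- is circular: that $|L_{(a,b)}^{(n)}|$ is a function of $\{a,b\}$ is trivial, and $a\le b$ only removes the ordering ambiguity inside one unordered pair; neither shows that two \emph{different} coprime pairs cannot produce the same value of ${2n\over a}+{2n\over b}$. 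The actual content here is arithmetic: since $\gcd(a,b)=1$ implies $\gcd(a+b,ab)=1$, the fraction ${1\over a}+{1\over b}={a+b\over ab}$ is already in lowest terms, so equality of two such values forces $a+b=a'+b'$ and $ab=a'b'$, whence $\{a,b\}=\{a',b'\}$ as the roots of $x^2-(a+b)x+ab$. This step, which is the only nontrivial part of the ``different number of elements'' assertion, does not appear in your proposal.

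The second gap is the part you yourself call ``the main obstacle'': establishing that one may take $a\divides n$, $b\divides n$, $\gcd(a,b)=1$. You describe what would need to be done (``a short analysis \dots forces the stronger divisibility'') but never do it, and the goal is slightly miscast: $\inpro{L}=\cD_n$ does \emph{not} force $a\divides n$ (for $n=3$ the set $\{1,\go^2,j,\go j\}$ generates $\cD_3$ with $a=2\nmid 3$); what is true is that the mixed products $\go^{\ell b}j\circ\go^{ma}=-\go^{ma}=\go^{n+ma}$ enlarge the rotation part of the closure to $\inpro{\go^{\gcd(a,n)}}$, so that after renaming the generator one may assume $a\divides n$ (similarly $b\divides n$), and only then is $\inpro{L}=\cD_n$ equivalent to $\gcd(a,b)=1$. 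This is also precisely where your paragraph-two claim that ``the exponents stay in the lattices $a\ZZ$ and $b\ZZ$ modulo $2n$'' needs $n\in a\ZZ$ and $n\in b\ZZ$, i.e.\ the very divisibility you have deferred; without it the displayed union in (\ref{Labndefn}) is not closed under $\circ$. (A minor slip: your intermediate $(\go^s j)\circ\go^t=-\go^{2s+t}\cdots$ should evaluate to $-\go^t$.)
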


\begin{proof} We start from our general observation that a reflection system must have the
form
$$ L = L(\{1,\go^a,j,\go^b j\}), $$
for some $a$ and $b$. Since $x^{-1}\in L(\{1,x\})=\inpro{x}$, it makes no difference if
we take $x$ or $x^{-1}$ as a generator, and so we may suppose that 
$1\le a,b\le n$. Further, since we can take any generator of $\inpro{\go^a}$, 
we can suppose that $a\divides n$, and similarly $b\divides n$. Since
\begin{equation}
\label{Labsymmetry}
L_{(a,b)}^{(n)} j^{-1} = L(\{-j,-\go^a j, 1,\go^b \}) 
= L(\{j,\go^{a} j, 1,\go^b \}) = L_{(b,a)}^{(n)},
\end{equation}
so that $L_{(a,b)}^{(n)}$ and $L_{(b,a)}^{(n)}$ are equivalent reflection systems,
we may suppose (arbitrarily) that $a\le b$. 
We now consider the condition on $a$ and $b$ for 
	$\{1,\go^a,j,\go^b j\}$ to generate $\cD_n=\inpro{\go,j}$. The subgroup of $\inpro{\go}$
contained within $\inpro{\go^a,j,\go^b j}$ is that generated by the products of
	$\go^a$ and $\go^b=(\go^b j)j^{-1}$, which is all of $\inpro{\go}$ if and only if
	$\gcd(a,b)=1$. This condition then implies $\cD_n=\inpro{\go^a,j,\go^b j}$,
	and hence we arrive at the index set $\gO_n$. 

Taking $x=\go^a$, $y=1$ and $x=j$, $y=\go^b j$ in (\ref{xyinvformula}) gives
$$\go^{ma}\in L(\{1,\go^a\}, \qquad \go^{\ell b}j \in L(\{j,\go^b j\}), $$
so the elements listed in (\ref{Labndefn}) are in $L=L_{(a,b)}^{(n)}$, and 
contain a generating set for $L$.  They give all of $L$, 
as claimed, since they are closed under $\circ$, i.e., by the calculations
$$ \go^{m_1 a} \circ \go^{m_2 a} = \go^{(2m_1-m_2) a}, \qquad
\go^{\ell_1 b}j \circ \go^{\ell_2 b}j = \go^{(2\ell_1-\ell_2) b}j,  $$
$$ \go^{ma} \circ \go^{\ell b}j = -\go^{\ell b} j = \go^{({n\over b}+\ell)b}, \qquad
	\go^{\ell b}j \circ \go^{ma}  = -\go^{ma} = \go^{({n\over a}+m)a}.	$$
In particular, the size of $L$ is given by (\ref{Labnsize}).

Finally, we show the reflection systems of (\ref{Labndefn})
have different numbers of elements, and consequently are not isomorphic.
This follows from the fact that
$(a,b)\mapsto {1\over a}+{1\over b}$
is bijective on ordered pairs with $a\le b$ and $\gcd(a,b)=1$, and hence on $\gO_n$.
Suppose that
$$  a\le b, \quad a'\le b', \qquad \gcd(a,b)=\gcd(a',b')=1. $$
Then
$$ {1\over a}+{1\over b} = {1\over a'}+{1\over b'}
 \Iff {a+b\over ab} = {a'+b'\over a'b'}, $$
where the latter fractions are in reduced form.
The sum and product uniquely determine a pair $\{a,b\}$, since they
are the roots of the quadratic $(x-a)(x-b)=x^2-(a+b)x+ab$, so that
$\{a,b\}=\{a',b'\}$, and so $(a,b)=(a',b')$.
\end{proof}

The reflection systems for $\cD_n$ satisfy the inclusions implied by their
generators, i.e.,
\begin{equation}
\label{Labn-inclustions-1}
L_{(a',b')}^{(n)} \subset L_{(a,b)}^{(n)},\qquad a\divides a', \quad b\divides b',
\end{equation}
and, by (\ref{Labsymmetry}), also (at the level of equivalence)
\begin{equation}
\label{Labn-inclustions-1}
L_{(a',b')}^{(n)} \subset L_{(a,b)}^{(n)},\qquad a\divides b', \quad b\divides a'.
\end{equation}

\begin{figure}[h]
\begin{center}
	\caption{The reflection systems $L_{(a,b)}^{(n)}$ for $\cD_6$, and their inclusions.}
\begin{tikzpicture}
    \matrix (A) [matrix of nodes, row sep=1.0cm, column sep = -0.5 cm]
    {
	    & $L_{24}=L_{(1,1)}^{(6)}$ & \\
	    $L_{18}=L_{(1,2)}^{(6)}$ && $L_{16}=L_{(1,3)}^{(6)}$ \\
	    $L_{14}=L_{(1,6)}^{(6)}$ && $L_{10}=L_{(2,3)}^{(6)}$ \\
    };
    \draw (A-1-2)--(A-2-1);
    \draw (A-1-2)--(A-2-3);
    \draw (A-2-1)--(A-3-1);
    \draw (A-2-1)--(A-3-3);
    \draw (A-2-3)--(A-3-1);
    \draw (A-2-3)--(A-3-3);
\end{tikzpicture}
\end{center}
\end{figure}
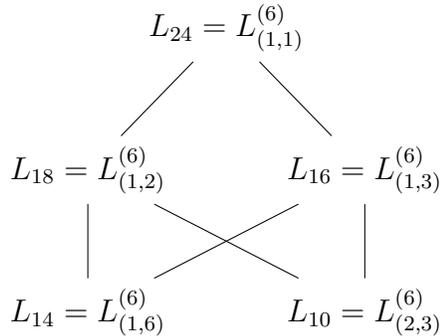

\begin{theorem} 
\label{DnBasegroups}
(Base groups for $\cD_n$)
The base reflection group for the reflection system $L=L_{(a,b)}^{(n)}$,
	$(a,b)\in \gO_n$, of (\ref{Labndefn}) is
\begin{equation}
\label{LabnBasegroup}
G=G_{\cD_n}(L_{(a,b)}^{(n)},C_{n/ab}),
\end{equation}
where $H_L=C_{n/ab}=\inpro{\go^{2ab}}$,  and 
	$$ \hbox{$\displaystyle|G|=8{n^2\over ab}$}, \qquad 
\hbox{$G$ has $\displaystyle {2n\over ab}+{2n\over a}+{2n\over b}-2$ reflections}. $$
Moreover, these groups are all different.
\end{theorem}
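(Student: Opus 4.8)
The plan is to reduce the whole statement to the single identity $H_L=\inpro{\go^{2ab}}$. Indeed, granting this, Lemma~\ref{GKLHstructure} gives at once $|G|=2|H_L||\cD_n|=2\cdot\tfrac{n}{ab}\cdot 4n=\tfrac{8n^2}{ab}$ and $2|H_L|+|L|-2=\tfrac{2n}{ab}+\tfrac{2n}{a}+\tfrac{2n}{b}-2$ reflections, using $|\cD_n|=4n$ and the size $|L_{(a,b)}^{(n)}|=\tfrac{2n}{a}+\tfrac{2n}{b}$ from (\ref{Labnsize}); the order $|\go^{2ab}|=n/ab$ is immediate from $ab\divides n$. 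So all the content lies in computing $H_L$.

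To compute $H_L$ I would pass to the diagonal subgroup $D$ of $\inpro{\{M_b\}_{b\in L}}$. Since each $M_b$ is an involution and $M_xM_1=\pmat{x&0\cr0&x^{-1}}$ (the $b=1$ case of (\ref{productoftwo})), every even-length word factors through these, so $D=\inpro{\pmat{x&0\cr0&x^{-1}}:x\in L}$ and $H_L=\{h:\pmat{h&0\cr0&1}\in D\}$. Writing $L=\{\go^{ma}\}\cup\{\go^{\ell b}j\}$ and noting $(\go^{\ell b}j)^{-1}=-\go^{\ell b}j$, these generators are powers and products of the three matrices $A=\pmat{\go^a&0\cr0&\go^{-a}}$, $B=\pmat{j&0\cr0&-j}$ and the scalar $Z=\go^b I$ (recovered from the $x=\go^b j$ and $x=j$ generators). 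A direct quaternion computation then yields $BAB^{-1}=A^{-1}$, $Z$ central and $B^2=Z^{n/b}$, so every element of $D$ has the normal form $A^kZ^tB^\epsilon$ with $\epsilon\in\{0,1\}$.

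From this normal form $H_L$ can be read off. The lower-right entry of $A^kZ^tB$ is a $j$-type quaternion, hence never equals $1$, so only $\epsilon=0$ contributes; and $A^kZ^t=\pmat{\go^{ka+tb}&0\cr0&\go^{-ka+tb}}$ has lower-right entry $1$ exactly when $ka\equiv tb\pmod{2n}$. This says $\go^{ka}\in\inpro{\go^a}\cap\inpro{\go^b}=\inpro{\go^{\lcm(a,b)}}=\inpro{\go^{ab}}$ (using $\gcd(a,b)=1$), so that $h=\go^{ka+tb}=\go^{2ka}=(\go^{ka})^2\in\inpro{\go^{2ab}}$; conversely every power of $\go^{2ab}$ is realised. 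Thus $H_L=\inpro{\go^{2ab}}=C_{n/ab}$, as claimed.

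For the last assertion that these base groups are all different, I would argue that the pair (order, number of reflections) separates the members of $\gO_n$. Equality of orders gives $ab=a'b'$; substituting into equality of reflection counts cancels the $\tfrac{2n}{ab}$ term and leaves $\tfrac1a+\tfrac1b=\tfrac1{a'}+\tfrac1{b'}$, whence $a+b=a'+b'$, so $\{a,b\}$ and $\{a',b'\}$ share a sum and a product and must coincide; with $a\le b$ this forces $(a,b)=(a',b')$ (this is just the injectivity of $(a,b)\mapsto\tfrac1a+\tfrac1b$ already used for (\ref{Labnsize})). The one genuine obstacle is the upper bound $H_L\subseteq\inpro{\go^{2ab}}$—simultaneously excluding the $j$-type diagonal elements and any smaller power of $\go$—and this is exactly what the explicit normal form for $D$ delivers.
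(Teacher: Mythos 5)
Your proof is correct in substance and follows the same overall skeleton as the paper's: reduce everything to the identity $H_L=\inpro{\go^{2ab}}$, invoke Lemma \ref{GKLHstructure} for the order and reflection count, and separate the base groups by the observation that $(a,b)\mapsto\frac1a+\frac1b$ is injective on $\gO_n$ (same sum and product forces the same pair). Where you genuinely diverge is on the upper bound $H_L\subseteq\inpro{\go^{2ab}}$: the paper obtains $\inpro{\go^{2ab}}\subseteq H_L$ from an explicit product of four nondiagonal reflections and then merely asserts that the reverse inclusion is ``easy to conclude'' by considering products of two reflections, whereas your normal form $A^kZ^tB^\epsilon$ for the diagonal part $D$ of $\inpro{\{M_b\}_{b\in L}}$ actually proves both inclusions at once (the lower bound is the case $k=b$, $t=a$, $\epsilon=0$). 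On this key step your argument is more complete than the printed one.

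One slip needs repair: $Z=\go^bI$ is \emph{not} central in $D$. Since $j\go^bj^{-1}=\go^{-b}$, one has $BZB^{-1}=Z^{-1}$ (a scalar matrix over $\HH$ with non-real entries commutes only with matrices whose entries centralise that scalar, so $Z$ commutes with $A$ but not with $B$). Your normal form nevertheless survives: $\inpro{A,Z}$ is abelian, being generated by diagonal matrices with complex entries, and it is normalised by $B$ via $BAB^{-1}=A^{-1}$ and $BZB^{-1}=Z^{-1}$, so $D=\inpro{A,Z}\cup\inpro{A,Z}B$ still holds and every element has the form $A^kZ^tB^\epsilon$. With that correction the computation of $H_L$, and hence the whole proof, goes through.
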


\begin{proof}
In view of our general method, it suffices to find $H_L$.
Since $|H_L|$ divides $|L|$, where
$$ |L| = {2n\over a}+{2n\over b}= 2(a+b) {n\over ab}, $$
the cyclic group $\inpro{\go^{2ab}}$
of order ${n\over ab}$,
which is normal in $\cD_n$, could be contained in $H_L$.
This is seen to be the case by the direct calculation:
$$ \pmat{0&\go^{-ma}\cr\go^{ma}}  \pmat{0&j\cr-j&0}
\pmat{0&\go^{\ell b}j\cr(\go^{\ell b}j)^{-1}} \pmat{0&1\cr1&0}
=\pmat{\go^{-ma-\ell b} &0\cr0&\go^{ma-\ell b}}, $$
and choose $m=b$, $\ell=a$ in the above, to obtain
	$$ \pmat{\go^{-2ab} &0\cr0&1}\in G_{\cD_n}(L,H_L) \Implies \go^{2ab}\in H_L
        \Implies \inpro{\go^{2ab}}\subset H_L. $$
By considering all the products of two reflections given by $L$, which are
diagonal matrices, it is easy to conclude that $\inpro{\go^{2ab}}=H_L$,
i.e., $\go^{ab}\not\in H_L$. 

The order and number of reflections follow
from Lemma \ref{GKLHstructure}, by the calculation
$$ |G|=2|\cD_n||H_L|=2(4n){n\over ab}, \qquad
	2|H_L|+|L|-2=2{n\over ab}+\Bigl({2n\over a}+{2n\over b}\Bigr)-2. $$

It follows from the general theory that the base groups for different reflection
systems are different reflection groups.
This can also be seen directly here, by considering the orders and number of reflections.
Suppose that the orders were equal, i.e.,
$$ 8 {n^2\over ab}= 8 {n^2\over a'b'} \Implies {2n\over ab} = {2n\over a'b'}, $$
then from the number of reflections being equal, we have
$$ {2n\over ab}+{2n\over a}+{2n\over b}-2 = {2n\over a'b'}+{2n\over a'}+{2n\over b'}-2
\Implies {1\over a}+{1\over b} = {1\over a'}+{1\over b'}, $$
        and since the map $(a,b)\mapsto {1\over a}+{1\over b}$ is bijective on $\gO_n$,
        we conclude that $(a,b)=(a',b')$.
\end{proof}

Since the order of any normal subgroup $H$ which gives a reflection 
group $G_{\cD_n}(L_{(a,b)}^{(n)},H)$ (in canonical form) must divide
$$ |L_{(a,b)}^{(n)}| = {2n\over a}+{2n\over b}= 2(a+b) {n\over ab}, $$
we can only have $H=\cD_n$, when $(a,b)=(1,1)$, i.e., $L=L_{(1,1)}^{(n)}=\cD_n$, $H_L=\inpro{\go^2}$,
giving the higher order group 
\begin{equation}
\label{DnDnGroup}
G_{\cD_n}(\cD_n,\cD_n) =\cG(\{1,\go,j,\go j\},\{\go,j\}),
\end{equation}
for the base group $G_{\cD_n}(\cD_n,C_n)$. 
For $n$ even, and $(a,b)=(1,1)$, there is 
the second higher order group  
\begin{equation}
\label{DnDn/2Group}
	G_{\cD_n}(\cD_n,\cD_{n/2}) =\cG(\{1,\go,j,\go j\},\{\go^2,j\}),
\end{equation}
for the base group $G_{\cD_n}(\cD_n,C_n)$. 
It is not possible to have $H=\cD_{n/2}$ for any other reflection system
$L_{(a,b)}^{(n)}$, $(a,b)\ne1$, since $|\cD_{n/2}|=2n$ would need to divide
$$ |L_{(a,b)}^{(n)}| = {2n\over a}+{2n\over b}\le {2n\over 1}+{2n\over 2}=3n. $$
These two cases withstanding, there can possibly be only one other higher order group
$$ G_{\cD_n}(L_{(a,b)}^{(n)},C_{2n/ab}). $$

\begin{theorem} 
\label{DnHighergroups}
(Higher order groups for $\cD_n$)
If $ab$ is odd, 
then there is one higher order reflection group for 
the reflection system $L=L_{(a,b)}^{(n)}$,
       $(a,b)\in \gO_n$, of (\ref{Labndefn}) given by 
\begin{equation}
\label{LabnHigergroup}
G=G_{\cD_n}(L_{(a,b)}^{(n)},C_{2n/ab}),
\end{equation}
where $H_L=C_{2n/ab}=\inpro{\go^{2b}}$,  
	and otherwise $G(\cD_n,L_{(a,b)}^{(n)},C_{2n/ab})$ is not in
	the canonical form. For this $G$,
        $$ \hbox{$\displaystyle|G|=16{n^2\over ab}$}, \qquad
\hbox{$G$ has $\displaystyle {4n\over ab}+{2n\over a}+{2n\over b}-2$ reflections}. $$
Moreover, these groups are all different from each other and the base groups.
\end{theorem}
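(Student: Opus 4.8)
The plan is to pin down the single candidate subgroup $H$, read off the order and reflection count from Lemma~\ref{GKLHstructure}, and then spend the real effort on the canonical-form dichotomy. As noted just before the theorem, the only cyclic normal subgroup lying strictly between $H_L=C_{n/ab}=\inpro{\go^{2ab}}$ and $\cD_n$ that still satisfies $LH=L$ is $H=C_{2n/ab}=\inpro{\go^{ab}}$. First I would record the three routine facts that make $G:=G_{\cD_n}(L,C_{2n/ab})$ a legitimate reflection group: $H\lhd\cD_n$ (every subgroup of the cyclic part $\inpro{\go}$ is normal), $H_L\subset H$, and $LH=L$ (the rotation block $\inpro{\go^a}$ and the reflection block $\inpro{\go^b}j$ of $L$ are each stable under right multiplication by $\inpro{\go^{ab}}$, using $j\go^{ab}=\go^{-ab}j$). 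Lemma~\ref{GKLHstructure} then yields $|G|=2|H||\cD_n|=2\cdot\frac{2n}{ab}\cdot 4n=16\frac{n^2}{ab}$ and $2|H|+|L|-2=\frac{4n}{ab}+\frac{2n}{a}+\frac{2n}{b}-2$ reflections, disposing of both displayed formulas.

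The heart of the argument is the canonical-form claim. By Lemma~\ref{GKLHstructure} the nondiagonal reflection $\pmat{0&c\cr c^{-1}&0}$ lies in $G$ exactly when $\ga(cH)=c^{-1}H$, so $L_G=\{c\in\cD_n:\ga(cH)=c^{-1}H\}$, and $G$ is in canonical form iff this set equals $L$; here $\ga$ is the order-$\le 2$ automorphism of $\cD_n/H$ with $\ga(xH)=x^{-1}H$ for $x\in L$, and I would compute it explicitly. Since $ab\divides n$ one has $\go^n\in H$, so $\cD_n/H$ is dihedral of order $2ab$ with $\bar\go:=\go H$ of order $ab$ and $\bar\jmath:=jH$ an involution. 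Evaluating $\ga$ on the generators $\go^a,\,j,\,\go^bj$ of $L$ gives $\ga(\bar\go^a)=\bar\go^{-a}$, $\ga(\bar\jmath)=\bar\jmath$, hence $\ga(\bar\go^b)=\bar\go^b$; therefore $\ga(\bar\go)=\bar\go^{s}$ where the Chinese remainder theorem (using $\gcd(a,b)=1$) fixes $s$ modulo $ab$ by $s\equiv 1\pmod a$ and $s\equiv -1\pmod b$. The membership conditions then become $(s+1)p\equiv 0\pmod{ab}$ for a rotation $\go^p$ and $(s-1)p\equiv 0\pmod{ab}$ for a reflection $\go^p j$, so the rotation and reflection blocks of $L_G$ have sizes $\gcd(s+1,ab)$ and $\gcd(s-1,ab)$, while those of $L$ have sizes $b$ and $a$. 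Because $\gcd(s+1,ab)=b\,\gcd(2,a)$ and $\gcd(s-1,ab)=a\,\gcd(2,b)$, and since $L\subseteq L_G$ always (by~(\ref{LGform})), the equality $L_G=L$ holds iff $\gcd(2,a)=\gcd(2,b)=1$, i.e. iff $a,b$ are both odd, i.e. iff $ab$ is odd; when $ab$ is even one block strictly enlarges and $G$ is not canonical. This automorphism-plus-$\gcd$ computation is the step I expect to be the main obstacle, everything else being bookkeeping.

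Finally I would establish distinctness. Two higher groups share the same ambient $K=\cD_n$, so by case~(i) of the comparison following~(\ref{GKHLequivcnd}) equal order and equal reflection count force $|H|=|H'|$ and $|L|=|L'|$; equal orders give $ab=a'b'$, and then equal reflection counts give $\frac1a+\frac1b=\frac1{a'}+\frac1{b'}$, so the bijectivity of $(a,b)\mapsto\frac1a+\frac1b$ on $\gO_n$ established in the proof of~(\ref{Labnsize}) forces $(a,b)=(a',b')$. A higher group has order $16n^2/ab$ with $ab$ odd, which cannot match any base-group order $8n^2/a'b'$ from Theorem~\ref{DnBasegroups}, as that would require $a'b'=ab/2\notin\ZZ$. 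Hence the higher groups are pairwise distinct and distinct from all the base groups.
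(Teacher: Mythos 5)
Your proposal is correct, and for the central step it takes a genuinely different route from the paper. The paper handles the two directions of the canonical-form dichotomy asymmetrically: when $ab$ is even it writes down an explicit product of the generating reflections equal to $\pmat{0&\go^{a/2}\cr\go^{-a/2}&0}$ (resp.\ $\pmat{0&\go^{b/2}j\cr(\go^{b/2}j)^{-1}&0}$), exhibiting a nondiagonal reflection outside $L$; when $ab$ is odd it only asserts that canonicity ``can be shown by considering all the diagonal matrices which are given by a product of reflections,'' with no details. You instead compute $L_G$ exactly: identifying $\cD_n/\inpro{\go^{ab}}$ as dihedral of order $2ab$, pinning down the involutory automorphism $\ga$ by $s\equiv1\pmod a$, $s\equiv-1\pmod b$, and reducing membership in $L_G$ to the congruences $(s\pm1)p\equiv0\pmod{ab}$, whose solution counts $\gcd(s+1,ab)=b\gcd(2,a)$ and $\gcd(s-1,ab)=a\gcd(2,b)$ compare directly with the coset counts $b$ and $a$ for $L$. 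This treats both directions uniformly, actually supplies the argument the paper omits for the $ab$ odd case, and quantifies the failure when $ab$ is even (each offending block exactly doubles); the paper's version buys a concrete witness reflection but proves strictly less. Your identification $C_{2n/ab}=\inpro{\go^{ab}}$ agrees with Table 3 (the theorem statement's $\inpro{\go^{2b}}$ appears to be a typo), and your distinctness argument via orders and the injectivity of $(a,b)\mapsto\frac1a+\frac1b$ on $\gO_n$ is the same bookkeeping the paper uses for the base groups, noting as you do that $16n^2/(ab)=8n^2/(a'b')$ would force $a'b'=ab/2\notin\ZZ$.
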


\begin{proof} We first observe that since $\gcd(a,b)=1$, it is not possible to
have $a$ and $b$ both be even. If $a$ is even (so $b$ is odd), then 
	$G(\cD_n,L_{(a,b)}^{(n)},C_{2n/ab})$ contains the reflection
 $$ \pmat{0&\go^{a\over2}\cr \go^{-{a\over2}}&0}
        = \pmat{-1&0\cr0&1} 
        \Bigl[ \pmat{0&1\cr1&0}\pmat{0&\go^a\cr\go^{-a}&0}\Bigr]^{b-1\over2}
       \pmat{0&1\cr1&0}
        \Bigl[ \pmat{0&j\cr-j&0} \pmat{0&\go^b j\cr (\go^b j)^{-1}&0}\Bigr]^{a\over2} , $$
and hence is not in the canonical form. Similarly, for $b$ even, we have
        $$ \pmat{0&\go^{b\over2}j\cr (\go^{b\over2}j)^{-1}&0}
        = \pmat{-1&0\cr0&1}
        \Bigl[ \pmat{0&1\cr1&0}\pmat{0&\go^a\cr\go^{-a}&0}\Bigr]^{b\over2}
        \Bigl[ \pmat{0&j\cr-j&0} \pmat{0&\go^b j\cr (\go^b j)^{-1}&0}\Bigr]^{a-1\over2}
        \pmat{0&j\cr-j&0}, $$
and again $G(\cD_n,L_{(a,b)}^{(n)},C_{2n/ab})$ is not in the canonical form.

For $ab$ is odd, in can be shown that $G(\cD_n,L_{(a,b)}^{(n)},C_{2n/ab})$
is in canonical form, by considering all the diagonal matrices which are 
given by a product of reflections.

Since a higher order group corresponds to a unique reflection system $L$, the 
	higher order groups (if there is one) are not isomorphic to each other, 
	or a base group, which completes the proof.
\end{proof}

\begin{example} 
For the quaternion group $Q_8=\cD_2$, there are two reflection systems
$$ L_{(1,1)}^{(2)}=Q_8=\{1,-1,i,-i,j,-j,k,-k\}, 
\qquad L_{(1,2)}^{(2)}=L(\{1,i,j\})=\{1,-1,i,-i,j,-j\}, $$
of size $8$ and $6$, giving the base groups
\begin{align*}
G_{Q_8}(Q_8,C_2), \qquad & |G|=32, \quad\hbox{$10$ reflections}, \cr
G_{Q_8}(L_{(1,2)}^{(2)},1), \qquad & |G|=16, \quad\hbox{$6$ reflections}, 
\end{align*}
and one higher order group
$$ G_{Q_8}(Q_8,C_4), \qquad   |G|=64, \quad\hbox{$14$ reflections}. $$
The final reflection group is given by (\ref{DnDnGroup}), i.e., 
$$ G_{Q_8}(Q_8,Q_8), \qquad   |G|=128, \quad\hbox{$22$ reflections}. $$
\end{example}

The reflection groups $G_{\cD_n}(L_{(a,b)}^{(n)},C_r)$
of Theorems \ref{DnBasegroups} and \ref{DnHighergroups} are given by the unique
index $[n,a,b,r]\in\gL_n$, where
\begin{equation}
\label{gLndefn}
\gL_n= \bigcup_{(a,b)\in\gO_n}\{ \hbox{$[n,a,b,{n\over ab}]$}\} \cup 
\bigcup_{(a,b)\in\gO_n\atop ab\, {\rm is}\, {\rm odd}} \{\hbox{$[n,a,b,{2n\over ab}]$}\}.
\end{equation}
We will use the notation
\begin{equation}
\label{Gnabrdefn}
G_n(a,b,r)=G(n,a,b,r) := G_{D_n}(L_{(a,b)}^{(n)},C_r)
=\cG(\{1,\go^a,j,\go^b j\},\{\go^{2n\over r}\}),
\end{equation}
for these groups (the $\cH$ above not being required for the base group).
We observe that
\begin{itemize}
\item $abr=n$ for the base group.
\item $abr=2n$ for the higher order group (when $ab$ is odd).
\item $G_n(a,b,r)$ has order $8nr$.
\item $G_n(a,b,r)$ has $2r+{2n\over a}+{2n\over b}-2$ reflections.
\end{itemize}
The size of $\gL_n$ depends on the number of divisors of $2n^2$, which 
we denote $\tau(2n^2)$, i.e.,
$$ |\gL_n|={\tau(2n^2)\over2} +1. $$

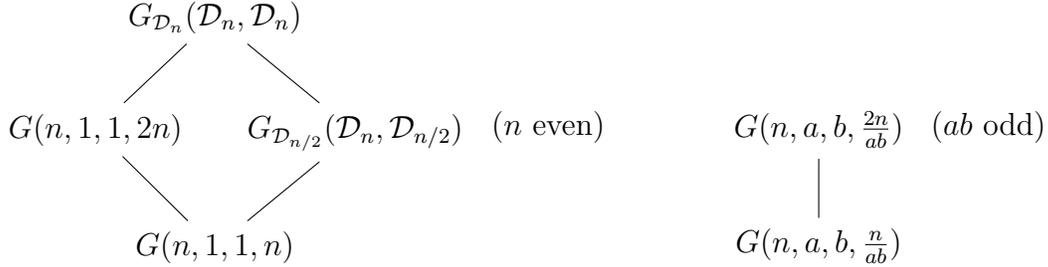
\begin{figure}[h]
\caption{Summary of the reflection groups for $K=\cD_n$, as they evolve from the base group
	$G(n,a,b,{n\over ab})$, $(a,b)\in\gO_n$, for the cases $(a,b)=1$ and $(a,b)\ne1$,
	respectively.}
        \label{Linclusionsfigure}
\begin{center}
\begin{tabular}[t]{ p{6.0truecm} p{2.8truecm} p{2.2truecm} p{2.0truecm} }
\begin{tikzpicture}
\matrix (A) [matrix of nodes, row sep=0.8cm, column sep = -1.0 cm]
    {
	    & $G_{\cD_n}(\cD_n,\cD_n)$ & \\
	    $G(n,1,1,2n)$ && $G_{\cD_{n/2}}(\cD_n,\cD_{n/2})$  \\
	    & $G(n,1,1,n)$ & \\
    };
    \draw (A-1-2)--(A-2-1);
    \draw (A-1-2)--(A-2-3);
    \draw (A-2-1)--(A-3-2);
    \draw (A-2-3)--(A-3-2);
\end{tikzpicture}
 &
	\begin{tikzpicture}
\matrix (A) [matrix of nodes, row sep=0.8cm, column sep = 0.0 cm]
    {
	    $\hbox{($n$ even)}$ \\
                \\
                \\
    };
\end{tikzpicture}
	&
	\begin{tikzpicture}
\matrix (A) [matrix of nodes, row sep=0.8cm, column sep = 0.0 cm]
    {
	     \\
		\\
            $G(n,a,b,{2n\over ab})$ \\
            $G(n,a,b,{n\over ab})$ \\
    };
    \draw (A-3-1)--(A-4-1);
\end{tikzpicture}
	&  \begin{tikzpicture}
\matrix (A) [matrix of nodes, row sep=0.8cm, column sep = 0.0 cm]
    {
	    $\hbox{($ab$ odd)}$ \\
                \\
                \\
    };
\end{tikzpicture}

\end{tabular}
\end{center}
\end{figure}

We now consider when the $G_n(a,b,r)$ for different indices 
can give the same group. Since isomorphic groups have the same reflection orbits,
we first consider these. 
We recall that since $\{1,\go^a,j,\go^b\}$ generates $L=L_{(a,b)}^{(n)}$, we have
$$ L_{(a,b)}^{(n)} = \Orb(L,1)\cup\Orb(L,\go^a) \cup \Orb(L,j)\cup\Orb(L,\go^b j), $$
where
$$ \Orb(L,1)\cup\Orb(L,\go^a) = \{ \go^{ma}\}_{1\le m\le {2n\over a}}, \quad
\Orb(L,j)\cup\Orb(L,\go^b j) = \{ \go^{\ell b}j\}_{1\le\ell\le {2n\over b}}. $$


\begin{lemma} 
\label{LabOrbit-lemma}
($L_{(a,b)}^{(n)}$ orbits)
The orbits of $L=L_{(a,b)}^{(n)}$, $(a,b)\in\gO_n$, satisfy
\begin{enumerate}[(i)]
\item $\Orb(L,\go^a)=\Orb(L,1)$ if and only if ${n\over a}$ is odd.
\item $\Orb(L,\go^b j)=\Orb(L,j)$ if and only if ${n\over b}$ is odd.
\end{enumerate}
In particular, $L$ can have two, three or four orbits.
\end{lemma}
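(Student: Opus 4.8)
The plan is to compute the orbit $\Orb(L,\go^a)$ directly from the definition, which closes $\{\go^a\}$ under the maps $x\mapsto c\circ x=cx^{-1}c$ for all $c\in L$. Since $L=L_{(a,b)}^{(n)}$ contains the two families $\{\go^{ma}\}$ and $\{\go^{\ell b}j\}$, I would apply each type of generator $c$ to a power $\go^{ka}$ and record which new powers of $\go$ appear. Using the closure relations already computed in the proof of the key lemma, namely $\go^{m_1 a}\circ\go^{m_2 a}=\go^{(2m_1-m_2)a}$ and $\go^{\ell b}j\circ\go^{ma}=-\go^{ma}=\go^{({n\over a}+m)a}$, the reachable exponents (in units of $a$) from $\go^a$ are generated additively by the steps $2m_1-m_2$ (which keep us within the odd multiples if we start odd) together with the shift by ${n\over a}$ coming from conjugation by the $j$-type elements.

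\textbf{The parity criterion.} The heart of the argument is tracking a parity invariant. Starting from $\go^a=\go^{1\cdot a}$, the first family of moves $\go^{ka}\mapsto\go^{(2m_1-k)a}$ changes the exponent-coefficient $k$ by an even amount, hence preserves the parity of $k$; so from $k=1$ these moves alone reach exactly the odd multiples of $a$. Meanwhile $\Orb(L,1)$ contains $\go^{0}=1$, i.e.\ the even multiples. The only way to cross between even and odd coefficients is the shift $k\mapsto k+{n\over a}$ supplied by the $j$-type generators. Thus $\Orb(L,\go^a)$ meets $\Orb(L,1)$ precisely when ${n\over a}$ is odd, which flips parity and merges the two orbits into the full set $\{\go^{ma}\}_{1\le m\le 2n/a}$; when ${n\over a}$ is even the shift preserves parity and the two orbits stay disjoint. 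This establishes part (i), and part (ii) follows by the identical computation on the $j$-coset, using $\go^{\ell_1 b}j\circ\go^{\ell_2 b}j=\go^{(2\ell_1-\ell_2)b}j$ together with the cross-term $\go^{ma}\circ\go^{\ell b}j=\go^{({n\over b}+\ell)b}j$, so the relevant shift is by ${n\over b}$.

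\textbf{Counting the orbits.} For the final sentence I would combine (i) and (ii). The diagonal-power part $\{\go^{ma}\}$ forms either one orbit (if ${n\over a}$ odd) or two (if ${n\over a}$ even), and independently the $j$-part $\{\go^{\ell b}j\}$ forms one or two orbits according to the parity of ${n\over b}$. Since a generator of type $\go^{ma}$ and one of type $\go^{\ell b}j$ never lie in a common orbit (the two families are separately closed under $\circ$, as noted just before the index set $\gO_n$ was introduced, so no $\circ$-move sends a pure power of $\go$ to a $j$-coset element), the total orbit count is the sum, giving $1+1=2$, $1+2=3$, or $2+2=4$. The value $1+2$ and $2+1$ both yield three, so the possibilities are exactly two, three, or four orbits.

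\textbf{Anticipated obstacle.} The main technical care is in verifying that the parity-changing shift really is available and is the \emph{only} way parities mix --- that is, confirming that conjugation by a $j$-type element sends $\go^{ka}$ to $\go^{(k+{n\over a})a}$ (so the shift is genuinely ${n\over a}$, not something else) and that no composition of the in-family moves can secretly change parity. This reduces to the bookkeeping that all in-family steps alter the coefficient by an even integer, which is immediate from the formulas $2m_1-m_2\equiv m_2\pmod 2$; I expect this to go through cleanly once the exponents are tracked modulo $2$, so the proof is essentially a parity computation rather than anything deeper.
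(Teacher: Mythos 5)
Your proof is correct and follows essentially the same route as the paper's: both rest on the two closure formulas $\go^{ma}\circ\go^{ka}=\go^{(2m-k)a}$ and $\go^{\ell b}j\circ\go^{ka}=-\go^{ka}=\go^{(k+{n\over a})a}$, and both reduce the question to whether the shift by ${n\over a}$ can change the parity of the coefficient; the paper merely phrases this by describing the orbit elements explicitly as $\go^{(2m-1)a}$ and $\go^{(2m-1)a+n}$ and solving for when one equals $1$, whereas you phrase it as a parity invariant. The orbit-counting conclusion via the sum of the two independent counts also matches the paper's reasoning.
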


\begin{proof} We first consider the orbit of $\go^a$. 
We observe that $\Orb(L,\go^a)=\Orb(L,1)$ if and only if $1\in\Orb(L,\go^a)$. Since
$$ \go^{ma} \circ \go^a = \go^{(2m-1)a}, \qquad
	\go^{\ell b}j \circ \go^a = -\go^a = \go^{a+n}, $$
the orbit $\Orb(L,\go^a)$ consists of all elements of the form
$\go^{(2m-1)a}$ and $\go^{(2m-1)a+n}$. Such elements can be equal to $1$ if and only if
$(2m-1)a=2n$ or $(2m-1)a=n$, with the latter condition able to be satisfied when
	${n\over a}$ is odd ($m={n+a\over 2a}$), which gives 
$$ \go^{{n+a\over 2a}a} \circ \go^a = \go^{n}=-1 
\Implies 1=j \circ(\go^{{n+a\over 2a}a} \circ \go^a)\in \Orb(L,\go^a). $$

The argument for the orbit of $\go^b j$ follows similarly. Since
$$ \go^{ma} \circ \go^b j = -\go^b j=\go^{b+n}j, \qquad
\go^{\ell b}j \circ \go^b j = \go^{(2\ell-1)b}j, $$
the elements of $\Orb(L,\go^b j)$ have the form
$\go^{(2\ell-1)b}j$ or $\go^{(2\ell-1)b+n}j$, and this can be equal to $j$ 
only in the second case, when ${b\over n}$ is odd and $2\ell-1={n\over b}$, 
which gives
$$ \go^{{n+b\over2b} b}j \circ \go^b j
        = \go^{n}j=-j 
	\Implies j = 1 \circ (\go^{{n+b\over2b} b}j \circ \go^b j) \in\Orb(L,\go^b j). $$
It is easy enough to choose $n,a,b$ so that there are all possibilities of 
	equality and inequality in (i) and (ii), e.g., $L$ has exactly two orbits 
	if and only if $n$ is odd.
\end{proof}

From the proof of Lemma \ref{LabOrbit-lemma}, it follows that
$\Orb(L,\go^a)$ and $\Orb(L,1)$ have the same size (whether or not they are equal),
and similarly for the other pair of orbits, so we conclude 
\begin{equation}
\label{Orbwa}
|\Orb(L,\go^a)|=|\Orb(L,1)|=
\begin{cases}
        {2n\over a}, & \hbox{${n\over a}$ is odd}; \cr
        {n\over a}, & \hbox{${n\over a}$ is even},
\end{cases}
\end{equation}
\begin{equation}
\label{Orbwbj}
|\Orb(L,\go^b j)|=|\Orb(L,j)|=
\begin{cases}
        {2n\over b}, & \hbox{${n\over b}$ is odd}; \cr
        {n\over b}, & \hbox{${n\over b}$ is even},
\end{cases}
\end{equation}

\begin{theorem} 
\label{abapbp-lemma}
(Isomorphisms) 
The only reflection groups $G=G(n,a,b,r)$ for different indices which are isomorphic are given by the 
infinite family of index pairs
\begin{equation}
\label{infinitefamilyiso}
[n,1,n,2], \quad [2n,2,n,1], \qquad \hbox{$n$ odd},
\end{equation}
with an isomorphism $G(n,1,n,2)\to G(2n,2,n,1)$ given by
$$ \pmat{1&0\cr0&-1}\mapsto \pmat{0&k\cr-k&0}, \qquad
	\pmat{0&b\cr b^{-1}&0}\mapsto \pmat{0&b\cr b^{-1}&0}, \quad b\in\{1,\go,j\}, \quad
	\go:=e^{\pi i\over n}. $$
\end{theorem}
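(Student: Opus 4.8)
The plan is to split the statement into a uniqueness part (these index pairs are the only ones giving isomorphic groups) and an existence part (the displayed map really is an isomorphism), carrying out uniqueness by reflection-orbit bookkeeping.

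First I would reduce the problem. Two groups $G(n,a,b,r)$, $G(n',a',b',r')$ with the same $n$ (equivalently with underlying groups of equal order, since $|\cD_n|=4n$) are never isomorphic, by Theorems \ref{DnBasegroups} and \ref{DnHighergroups}. So for an isomorphism between distinct indices we may assume the underlying groups $K=\cD_n$, $K'=\cD_{n'}$ satisfy $|K|<|K'|$. Lemma \ref{isomorphismlemma} then forces $|K'|=2|K|$, hence $n'=2n$, together with $H=C_2$, $H'=1$, $|L'|=|L|+2$, and an embedding of $L=L_{(a,b)}^{(n)}$ as a subreflection system of $L'=L_{(a',b')}^{(2n)}$. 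Thus the smaller group is some $G(n,a,b,2)$ with $H_L=C_2$ and the larger is a base group $G(2n,a',b',1)$ with $a'b'=2n$ (a higher-order group cannot have $H'=1$). Reading off from Theorems \ref{DnBasegroups}--\ref{DnHighergroups} when $H_L=C_2$, the smaller group is either a base group with $n$ even and $ab=n/2$, or a higher-order group with $n$ odd and $ab=n$.

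The key tool for uniqueness is that isomorphic reflection groups have the same multiset of reflection orbits; I would compute these with Lemma \ref{LabOrbit-lemma} and (\ref{Orbwa})--(\ref{Orbwbj}), noting that $L'$ has at most four orbits and, since $H'=1$, no diagonal reflections, whereas the diagonal reflections of the smaller group contribute one extra orbit of size two. In the base case ($n$ even) we have $n/a=2b$ and $n/b=2a$, both even, so $L$ splits into four orbits and the smaller group has five reflection orbits; since $G(2n,a',b',1)$ has at most four, no isomorphism is possible. In the remaining case ($n$ odd, $ab=n$) both $n/a=b$ and $n/b=a$ are odd, so $L$ gives two orbits and the smaller group has reflection multiset $\{2,2a,2b\}$, all of type $C_2$. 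For $G(2n,a',b',1)$, writing $a'b'=2n$ with $\gcd(a',b')=1$ (so exactly one of $a',b'$ is even), the multiset is $\{2b',a',a'\}$ or $\{b',b',2a'\}$ according to which of $a',b'$ is even. Either way the $L'$-multiset has a repeated entry, while $\{2,2a,2b\}$ repeats only when $a=1$ (forcing $b=n$); a short check then gives $\{a',b'\}=\{2,n\}$, i.e.\ exactly the family $[n,1,n,2]$, $[2n,2,n,1]$.

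For existence I would verify the displayed map directly. Writing $\go':=e^{\pi i/(2n)}$ for the generator of $\cD_{2n}$, one has $(\go')^2=\go$ and $(\go')^n=i$, so $L_{(2,n)}^{(2n)}=L_{(1,n)}^{(n)}\cup\{k,-k\}$ (disjoint), and the base group $G(2n,2,n,1)$ is generated by the reflections $M_1,M_\go,M_j,M_k$. The map $\phi$ carries the four generating reflections $M_1,M_\go,M_j$ and $\pmat{1&0\cr0&-1}$ of $G(n,1,n,2)$ bijectively onto these. Checking that $\phi$ extends to a homomorphism reduces to a finite verification on products of generators; the essential identity is $\ga'(k)=k^{-1}=-k$ for the order-two automorphism $\ga'$ of $\cD_{2n}$ underlying $G(2n,2,n,1)$, which places $\pmat{k&0\cr0&-k}=M_kM_1$ in the target and matches the image of $\pmat{1&0\cr0&-1}M_1$. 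Since the images generate $G(2n,2,n,1)$ and both groups have order $16n$, the resulting surjection is an isomorphism. The main obstacle is the uniqueness step: pinning down the reflection-orbit multisets correctly in both the base and higher-order cases and making the multiset-matching argument airtight, so that no $(a,b)$ with $a>1$ survives; the existence check is routine once the inclusion $L_{(1,n)}^{(n)}\subset L_{(2,n)}^{(2n)}$ and the identity $\ga'(k)=-k$ are in hand.
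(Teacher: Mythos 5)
Your proposal is correct, and the uniqueness half takes a genuinely different route from the paper's. Both arguments begin identically: reduce to $n'=2n$, $H=C_2$, $H'=1$, $|L'|=|L|+2$ via Lemma \ref{isomorphismlemma}, and split into the base-group case ($n$ even, $ab=n/2$) and the higher-order case ($n$ odd, $ab=n$). From there the paper extracts only one piece of orbit information --- that $L'$ must contain an orbit of size $2$ --- which pins $(a',b')$ down to $(1,2n)$ or $(2,n)$, and then disposes of the resulting sum/product systems by a discriminant/perfect-square analysis. You instead match the full multisets of reflection-orbit sizes: the base case dies immediately (five orbits against at most four), and in the higher-order case the forced repeated entry in $\{2b',a',a'\}$ or $\{b',b',2a'\}$ forces a repeated entry in $\{2,2a,2b\}$, whence $a=1$, $b=n$ and $\{a',b'\}=\{2,n\}$. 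This buys two things: it avoids the quadratic formula entirely, and it covers the $n=2$ corner of the base case, where the paper's ``$n(n-2)$ is not a perfect square for $n\ne2$'' step is silent (there $[2,1,1,2]$ and $[4,1,4,1]$ really do share order $32$ and $10$ reflections, and are separated only by their orbit counts, $5$ versus $3$). Your multiset computations check out, including the fact that the two nondiagonal orbit types never merge under the higher-order group, which is needed for $\{2,2a,2b\}$ to be exact. The existence half is the same as the paper's and at the same level of rigor: both of you assert rather than fully verify that the generator assignment extends to a homomorphism (one must check that relations are preserved, not merely examine products of generators), and then use the common order $16n$ to upgrade the surjection to an isomorphism; your observation that $\ga'(k)=-k$ places $\mathrm{diag}(k,-k)$ in the target is the right consistency check to record.
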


\begin{proof}
Suppose that $G=G(n,a,b,r)$ and $G'=G(n',a',b',r')$ are isomorphic, then all 
their reflections have order $2$, and by Lemma \ref{isomorphismlemma}, we have
$$ H=C_2, \quad H'=1 \qquad |G'|=2|G|\Implies n'=2n, $$
so that $r=2$, and $r'=1$, i.e., ${n'\over a'b'}=1$. There are two possibilities, 
depending on whether $G$ is a higher order group, or a base group, respectively, i.e.,
\begin{equation}
\label{quad-one}
{n\over ab}=1, \quad {n'\over a'b'}=1 \Implies a'b'=n'=2n=2ab,
\end{equation}
\begin{equation}
\label{quad-two}
{n\over ab}=2, \quad {n'\over a'b'}=1 \Implies a'b'=n'=2n=4ab.
\end{equation}
From $|L'|=|L|+2$, these give
\begin{equation}
\label{lin-one}
2n'{a'+b'\over a'b'} = 2n{a+b\over ab}+2 \Implies a'+b'=a+b+1,
\end{equation}
\begin{equation}
\label{lin-two}
2n'{a'+b'\over a'b'} = 2n{a+b\over ab}+2 \Implies a'+b'=2(a+b)+1.
\end{equation}
Since $a'b'=2n$, either $a'$ or $b'$ is even (they have no common factors), with
	the other being odd. In the first case, (\ref{lin-one}) implies $a+b=a'+b'-1$ is even. 
But $a$ and $b$ have no common factors, so they must both be odd, and hence $n=ab$ is odd.

The reflection system $L_{(a',b')}^{(2n)}$ for $G(2n,a',b',1)$ must have an orbit of size $2$.
Since ${2n\over b'}<{2n\over a'}$, Lemma \ref{LabOrbit-lemma} and (\ref{Orbwbj})
give two possibilities
$$ {4n\over b'}=2 \quad \hbox{(${2n\over b'}$ odd)}, \qquad
{2n\over b'}=2 \quad \hbox{(${2n\over b'}$ even)}, $$
i.e., 
$$ a'=1, \quad b'=2n, \qquad a'=2,\quad b'=n. $$

Consider the case: $a'=1$, $b'=2n$. From 
(\ref{quad-one}), (\ref{lin-one}) and (\ref{quad-two}), (\ref{lin-two}),
we have
$$ a+b=2n, \quad ab=n \Implies a = n-\sqrt{n^2-n}, $$
	$$ a+b=n, \quad ab={n\over2} \Implies a = {n-\sqrt{n^2-2n}\over2}. $$
Since $n$ and $n-1$ have no common factors, the square root 
of $n^2-n=n(n-1)$ above cannot be an integer, 
and similarly $n^2-2n=n(n-2)$ cannot be a perfect square for $n\ne2$.
Thus an isomorphism with $a'=1$, $b'=2n$ is not possible.

For the other case: $a'=2$, $b'=n$, 
(\ref{quad-one}), (\ref{lin-one}) and (\ref{quad-two}), (\ref{lin-two}), give
$$ a+b=n+1, \quad ab=n \Implies a = 1, \quad b=n, $$
$$ a+b={n+1\over2}, \quad ab={n\over2} \Implies a = {n+1-\sqrt{n^2-6n+1}\over4}. $$
	The latter case is not possible, since for $n^2-6n+1$, $n\ne0$, to be a perfect square,
	we must have $n=6$, which gives $a={7-1\over4}={3\over2}$.
For the first case,
%
we obtain the indices $[n,1,n,2]$, $[2n,2,n,1]$, where $n$ is odd (as previously observed),
	and corresponding groups
	$$ G=G(n,1,n,2), \qquad G'=G(2n,2,n,1). $$
	Let $\go$ be a primitive $4n=2(2n)$ root of unity, so the reflection systems are
		$$ L= L_{(n,1)}^{(n)} =L(\{1,\go^2,j,(\go^2)^n j\}) = L(\{1,\go^2,j,-j\}) = L(\{1,\go^2,j\}), $$
		$$ L'=L_{(2,n)}^{(2n)}=L(\{1,\go^2,j,\go^n j\})=L(\{1,\go^2,j,ij\}) = L(\{1,\go^2,j,k\}), $$
	where $|L'|=|L|+2$, gives 
		$$ L' = L \cup \{k,-k\}. $$
	Both reflection groups have three orbits of reflections (of order $2$), respectively
	$$\bigl\{ \pmat{1&0\cr0&-1},\pmat{-1&0\cr0&1}\bigr\}, \quad
	\bigl\{\pmat{0&j\cr-j&0},\pmat{0&-j\cr j&0}\bigr\}, \quad
	\bigl\{\pmat{0&b\cr b^{-1}&0} : b \in L\setminus\{j,-j\}\bigr\}, $$
	$$ \bigl\{\pmat{0&k\cr-k&0},\pmat{0&-k\cr k&0}\bigr\}, \quad
	\bigl\{\pmat{0&j\cr-j&0},\pmat{0&-j\cr j&0}\bigr\}, \quad
	\bigl\{\pmat{0&b\cr b^{-1}&0} : b \in L\setminus\{j,-j\}\bigr\}. $$
	It is easy to verify that
	$$ \pmat{1&0\cr0&-1}\mapsto \pmat{0&k\cr-k&0}, \qquad
		\pmat{0&b\cr b^{-1}&0}\mapsto \pmat{0&b\cr b^{-1}&0}, \quad b\in L=L_{(1,n)}^{(n)}, $$
		gives an isomorphism $G\to G'$ (which is defined by its action on generators).
	%
\end{proof}

\begin{table}[h]
\caption{The imprimitve reflection groups for the dicyclic group $\cD_n$, $n\ge2$, including 
the particular case $\cD_2=Q_8$. Those for which $H$ is cyclic are of the form $G_n(a,b,r)$.
The base groups have $\cH=\{\}$, and the $\cL$ given is
a generating set for the corresponding reflection system.
The only isomorphism is $G_{\cD_n}(L_{(1,n)}^{(n)},C_2)\cong G_{\cD_{2n}}(L_{(2,n)}^{(2n)},1)$,
	$n$ odd.
	}
\label{Dn-refs-table}
\vskip0.25truecm
\begin{tabular}{ |  >{$}l<{$} | >{$}c<{$} | >{$}l<{$} | >{$}l<{$} | >{$}l<{$} | >{$}l<{$} | >{$}l<{$} | >{$}l<{$} |}
\hline
        &&&&&&&\\[-0.3cm]
	K & L & |L| & H & \hbox{order} & \hbox{reflections} &
        \cL & \cH \\[0.1cm]
\hline
&&&&&&&\\[-0.3cm]
Q_8 & Q_8 & 8 & Q_8 & 128 & 22 & \{1,i\} & \{{i,j}\} \\
Q_8 & Q_8 & 8 & C_4 & 64 & 14 & \{1,i\} & \{j\} \\
Q_8 & Q_8 & 8 & C_2 & 32 & 10 & \{1,i,j,k\} & \{\} \\
	Q_8 & L_{(1,2)}^{(2)} & 6 & 1 & 16 & 6 & \{1,i,j\} & \{\} \\
&&&&&&&\\[-0.3cm]
\cD_n & \cD_n & 4n & \cD_n & 32n^2 & 12n-2 & \{1,\go,j,\go j\} & \{\go,j\} \\
\cD_n & \cD_n & 4n & \cD_{n/2}=\inpro{\go^2,j}  & 16n^2 & 8n-2 & \{1,\go,j,\go j\} & \{\go^2,j\} \\
\cD_n & L_{(a,b)}^{(n)} & {2n\over a}+{2n\over b} & C_{2n\over ab}=\inpro{\go^{ab}} & 16{n^2\over ab} & {4n\over ab}+{2n\over a}+{2n\over b}-2 & \{1,\go^a,j,\go^b j \} & \{\go^{ab}\} \\
\cD_n & L_{(a,b)}^{(n)} & {2n\over a}+{2n\over b} & C_{n\over ab}=\inpro{\go^{2ab}} & 8{n^2\over ab} & {2n\over ab}+{2n\over a}+{2n\over b}-2 & \{1,\go^a,j,\go^b j \} & \{\} \\ [0.2cm]
\hline
\end{tabular}
\end{table}

The stipulation of the orbit size being $2$ is necessary in the above proof.

\begin{corollary}
\label{iso-coroll}
Reflection groups $G(n,a,b,r)$ and $G(n',a',b',r')$, $n<n'$, 
with reflections of order only two,
have the same order and the same number of reflections, but are not isomorphic,
in the following cases
\begin{enumerate}[(i)]
\item $n=ab$ is odd, $a\ne1$, and 
$$ (a+b+1)^2-8ab=c^2, \qquad c\in\{1,3,5,\ldots\}, $$
with the groups being $G(n,a,b,2)$ and $G(2n,{a+b+1+c\over2},{a+b+1-c\over2},1)$.
\item $n=2ab$ is even, and 
	$$ (2(a+b)+1)^2-8ab=c^2, \qquad c\in\{1,3,5,\ldots\}, $$
		with the groups being $G(n,a,b,2)$ and $G(2n,{2(a+b)+1+c\over2},{2(a+b)+1-c\over2},1)$.

\end{enumerate}
\end{corollary}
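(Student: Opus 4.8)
The plan is to read the corollary off the Diophantine bookkeeping already set up in the proof of Theorem~\ref{abapbp-lemma}, the point being that the only place in that proof which genuinely used isomorphism (rather than mere equality of order and number of reflections) was the requirement that $L'$ contain an orbit of size $2$. Dropping that single requirement leaves exactly the pairs that agree numerically but need not be isomorphic. First I would redo the numerical reduction without assuming isomorphism: since all reflections have order $2$, the diagonal reflections force $H,H'\in\{1,C_2\}$, so $r,r'\in\{1,2\}$; equal order $8nr=8n'r'$ together with $n<n'$ then forces $n'=2n$, $r=2$, $r'=1$. Thus $G=G(n,a,b,2)$ while $G'=G(2n,a',b',1)$ is a base group, so $a'b'=n'=2n$. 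Using $r=2$ in the identity $abr=n$ (base group) or $abr=2n$ (higher order group) splits the analysis into $n=2ab$ and $n=ab$, which are precisely cases (ii) and (i); the latter forces $ab$ odd, matching the existence of the higher order group.

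Next I would extract the quadratic. Equality of the reflection counts is equivalent to $|L'|=|L|+2$, and since $a'b'=2n$ one has $|L'|=\frac{4n}{a'}+\frac{4n}{b'}=2(a'+b')$, while $|L|=\frac{2n}{a}+\frac{2n}{b}$ is computed directly in each case. This expresses $a'+b'$ as a linear function of $a+b$ (namely $a+b+1$ in case (i) and $2(a+b)+1$ in case (ii)), so that $a'$ and $b'$ are the two roots of a monic integer quadratic with known sum and product $2n$. These roots are integers exactly when the discriminant is a perfect square $c^2$, which is the stated Diophantine equation; and since the relevant sum $a'+b'$ is odd, $c$ must be odd. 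Solving the quadratic gives the displayed closed forms for $a'$ and $b'$. It then remains to confirm that $(a',b')$ is a genuine index in $\gO_{2n}$: divisibility $a',b'\mid 2n$ is automatic from $a'b'=2n$, so the only substantive point is the coprimality $\gcd(a',b')=1$, which has to be read off from the arithmetic of the particular solution.

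Granting validity, equal order ($|G|=|G'|=16n$) is automatic and equal reflection count holds by construction, so the whole remaining content is non-isomorphism, and this is immediate from Theorem~\ref{abapbp-lemma}: the only isomorphisms between groups $G(n,a,b,r)$ with distinct indices form the family $[n,1,n,2]\leftrightarrow[2n,2,n,1]$ with $n$ odd, and the corollary's pairs avoid it---in case (i) because $a\ne1$, and in case (ii) because $n=2ab$ is even whereas that family needs $n$ odd. I expect the main obstacle to be the validity check of the previous paragraph, namely verifying $\gcd(a',b')=1$ so that $G(2n,a',b',1)$ really is a reflection group in canonical form; this is the one place where the solutions of the quadratic must be reconciled with the constraints defining $\gO_{2n}$, and everything else is either automatic or a direct appeal to Theorem~\ref{abapbp-lemma}.
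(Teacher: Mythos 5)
Your proposal is correct and follows essentially the same route as the paper: it reuses the numerical reductions (\ref{quad-one}), (\ref{lin-one}) and (\ref{quad-two}), (\ref{lin-two}) from the proof of Theorem \ref{abapbp-lemma}, reads $a'$ and $b'$ off as the roots of the monic quadratic with sum $a'+b'$ and product $2n$, gets $c$ odd from the parity of $a'+b'$, and obtains non-isomorphism directly from the theorem's determination of the only isomorphic index pairs. Two minor remarks: your case (ii) discriminant $(2(a+b)+1)^2-16ab$ agrees with the paper's own proof (the $-8ab$ in the corollary's statement appears to be a typo), and the check you flag that $\gcd(a',b')=1$, so that $(a',b')\in\gO_{2n}$ and $G(2n,a',b',1)$ is a genuine canonical index, is a real point which the paper's proof passes over in silence.
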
 

\begin{proof} The assertion that the groups have reflections of order only two, 
the same order and number of 
reflections, with $r=2$, $r'=1$, in the proof of Theorem \ref{abapbp-lemma}, leads to the 
necessary and sufficient conditions 
(\ref{quad-one}), (\ref{lin-one}) and (\ref{quad-two}), (\ref{lin-two}).

For the first pair,  $n=ab$ is odd, $n'=2n$, and
$$ a'b'=2ab, \quad a'+b'=a+b+1. $$
The condition $a\ne1$ excludes the isomorphism of Theorem \ref{abapbp-lemma}.
Using $a'+b'=a+b+1$ to eliminate $a'$ (equivalently $b'$) from $a'b'=2ab$ gives the
quadratic equation
$$ x^2-(a+b+1)x+2ab=0, $$
which has $a'$ and $b'$ as roots. The descriminant of this equation is
$(a+b+1)^2-8ab$, which therefore must be a square, say $c^2$, giving the formulas
$$ a' = {a+b+1-c\over 2}, \qquad b' = {a+b+1+c\over 2}. $$
Since $c=b'-a'$, of integers with different parity, 
we conclude that $c$ must be odd.

The argument for the second pair is similar, with $a'$ and $b'$ being the
roots of 
$$ x^2-(2(a+b)+1)x+4ab=0, $$
i.e., 
	$$ a' = {2(a+b)+1-c\over 2}, \qquad b' = {2(a+b)+1+c\over 2}. $$
where 
$$ (2(a+b)+1)^2-16ab=c^2. $$
\end{proof}

\begin{example}
There appear to be infinitely many type (i)  index pairs of  
Corollary \ref{iso-coroll},
with the first few being
\begin{align*}
[ 273, 7, 39, 2 ],\ [ 546, 21, 26, 1 ], \qquad
& [ 315, 7, 45, 2 ], \ [ 630, 18, 35, 1 ], \cr
[ 357, 7, 51, 2 ], \ [ 714, 17, 42, 1 ], \qquad
& [ 975, 13, 75, 2 ],\ [ 1950, 39, 50, 1 ], \cr
[ 1001, 11, 91, 2 ],\ [ 2002, 26, 77, 1 ], \qquad
& [ 1105, 13, 85, 2 ],\ [ 2210, 34, 65, 1 ], \cr
[ 1365, 15, 91, 2 ],\ [ 2730, 42, 65, 1 ], \qquad
& [ 1885, 13, 145, 2 ],\ [ 3770, 29, 130, 1 ].
\end{align*}
There are infinitely many of type (ii), including the family
$$ [2m(2m-1),m,2m-1,2], \quad [4m(2m-1),2m-1,2m,1], \qquad m=1,2,3,\ldots, $$
which may in fact be all of them.
\end{example}

We now consider how our reflection groups for $\cD_n$ relate those of Cohen \cite{C80}.
No isomorphisms were found in \cite{C80}, since the Lemma 2.3 (stated without proof there) says that
one must have $K=K'$ to obtain isomorphic groups. This is false 
(our Theorem \ref{abapbp-lemma}, and also Example \ref{isomorphismI}).

Table I of \cite{C80} has seven lines for the reflection 
groups for $\cD_n$. The lines $4,5$ give the reflection groups
$G_{\cD_n}(\cD_n,\cD_{n/2})$ and $G_{\cD_n}(\cD_n,\cD_{n})$, respectively,
and the lines $1,2,3,6,7$ give groups of the form 
$G_{\cD_n}(L_{(a,b)}^{(n)},C_r)=G(n,a,b,r)$.
The parameters for these groups are summarised in Table \ref{Cohen'sLabgroups}
(see Example \ref{indicesforCohenstable}). 
These do not account for all the groups $G(n,a,b,r)$ in 
our classification, with the first reflection groups missing being given by the indices
\begin{equation}
\label{firstnewgroups}
[6,1,3,4], \quad [ 9, 1, 3, 6 ], \quad [ 10, 1, 5, 4 ], \quad
[ 12, 1, 3, 8 ], \quad [ 15, 1, 5, 6 ], \quad [ 15, 1, 3, 10 ], 
\end{equation}
$$ [ 18, 1, 3, 12 ], \quad [ 18, 1, 9, 4 ], \quad [ 20, 1, 5, 8 ], \quad
[ 21, 1, 7, 6 ], \quad [ 21, 1, 3, 14 ], \quad [ 22, 1, 11, 4 ], $$
$$ [ 24, 1, 3, 16 ], \quad [ 25, 1, 5, 10 ], \quad [ 26, 1, 13, 4 ], \quad
[ 27, 1, 9, 6 ], \quad [ 27, 1, 3, 18 ], \quad [ 28, 1, 7, 8 ], $$
$$ [ 30, 1, 3, 20 ], \quad [ 30, 3, 5, 4 ], \quad [ 30, 1, 15, 4 ], \quad
[ 30, 1, 5, 12 ],\quad [ 33, 1, 3, 22 ],\quad [ 33, 1, 11, 6 ], \quad \ldots . $$
These are the indices $[n,a,b,r]\in\gL_n$, with
$$ (a,b)\ne(1,1), \qquad r\ne 2, \qquad r\nmid n, \qquad r\divides 2n,
 $$
and give higher order groups, with the base group appearing in either 
line 2 or 3. 

\begin{center}
	\begin{table}[h]
        \fontsize{10pt}{10pt}\selectfont
	\caption{Cohen's Table I (lines $1,2,3,6,7$) }
\vskip0.3truecm
\label{Cohen'sLabgroups}
        \begin{tabular}{ |  >{$}l<{$} | >{$}c<{$} | >{$\scriptstyle}p{2.6truecm}<{$} | >{$\scriptstyle}p{3.8truecm}<{$} | >{$}l<{$} | >{$}l<{$} | }
\hline
&&&&&\\[-0.1cm]
                K & H & \hbox{$|L|$} & \hbox{$\ga\in{\rm Aut}(K/H)$} &  |G| & [n,a,b,r]^\dagger
         \\[0.15cm]
\hline
&&&&&\\[0.0cm]
                \cD_{m} & C_{2m}  & \hbox{$4m$} & \hbox{$1$} & 16m^2 & {\scriptstyle [m,1,1,2m] }  \\
&&&&&\\[-0.1cm]
                \cD_{2m\ell} & C_{2m} &  2m\{\gcd(2\ell,r+1) & \hbox{$\ga_r,\ $}   {0\le r\le \ell,} \ r\ {\rm odd}  & 32m^2\ell & {\scriptstyle [2m\ell,\gcd(\ell,{r-1\over2}),\gcd(\ell,{r+1\over2}),2m] }  \\
                & & +\gcd(2\ell,r-1) \} & \ell=\gcd(\ell,{r+1\over2})\gcd(\ell,{r-1\over2}) &&  \\
&&&&&\\[-0.1cm]
                \cD_{(2m+1)\ell} & C_{2m+1} & (2m+1)\{\gcd(2\ell,r-1) & \hbox{$\gb_r,\ $}   {0\le r\le \ell,} \ r\ {\rm odd} & 8(2m+1)^2\ell & {\scriptstyle [(2m+1)\ell,\gcd(\ell,{r-1\over2}),\gcd(\ell,{r+1\over2}),2m+1]} \\
                & & +\gcd(2\ell,r+1) \} & \ell=\gcd(\ell,{r+1\over2})\gcd(\ell,{r-1\over2}) &&  \\
&&&&&\\[-0.1cm]
                \cD_{2m+1} & C_2 & 2\{\gcd(2m+1,r+1) & \hbox{$\ga_r,\ $}   {0\le r\le m,} \qquad 2m+1= & 16(2m+1) & {\scriptstyle [2m+1,\gcd(2m+1,r-1),\gcd(2m+1,r+1),2] }  \\
                 & & {+\gcd(2m+1,r-1) \}}  & {\gcd(2m+1,r+1)\gcd(2m+1,r-1)} & &  \\
&&&&&\\[-0.1cm]

                \cD_m & 1 & \gcd(2m,r+1) & \hbox{$\gb_r,\ $}   {0\le r\le m,} \ r\ {\rm odd}  & 8m & {\scriptstyle [m,\gcd(m,{r-1\over2}),\gcd(m,{r+1\over2}),1] }  \\
                 & & {+\gcd(2m,r-1) }  & {m=\gcd(m,{r+1\over2})\gcd(m,{r-1\over2})} & &  \\
&&&&&\\[-0.3cm]
&&&&&\\
\hline
\multicolumn{4}{l}{\hbox{  }} \\
\multicolumn{6}{l}{\hbox{\footnotesize $\dagger$ The index given may not have $a\le b$
		for all choices of $m,\ell,r$. Lines $2,3,5$ are the base group ($abr=n$).}}
\end{tabular}
\end{table}
        \end{center}

We consider the first of the new reflection groups given by the indices
(\ref{firstnewgroups}).

\begin{example}
\label{groups192}
(Groups of order $192$) The previously unknown
reflection group with index $[6, 1, 3, 4]$ has order $192$. 
The collection of all reflection groups of this order is

\begin{center}
\begin{tabular}{ |  >{$}l<{$} | >{$}l<{$} | >{$}l<{$} | >{$}l<{$} | >{$}l<{$} | }
\hline
&&&&\\[-0.3cm]
	G & \hbox{\rm identifier} & \hbox{\rm refs} & |L| & H  \\[0.1cm]
\hline
&&&&\\[-0.3cm]
[ 6, 1, 3, 4 ] & \inpro{192, 385} & 22 & 16 & C_4 \cr
[ 12, 1, 6, 2 ] & \inpro{192, 1312} & 30 & 28 & C_2 \cr
[ 12, 2, 3, 2 ] & \inpro{192, 1330} & 22 & 20 & C_2 \cr
[ 24, 1, 24, 1 ] & \inpro{192, 463} & 50 & 50 & 1 \cr
[ 24, 3, 8, 1 ] & \inpro{192, 471} & 22 & 22 & 1 \cr 
G_\cO(L_{20}^\cO,C_2) & \inpro{192, 1486} & 22 & 20 & C_2 \cr 
G_9 & \inpro{192, 963} & 30 & & \cr
\hline
\end{tabular}
\end{center}
We observe that there are four groups with $22$ reflections, each for a different
	group $K$, including an example of type (ii) in Corollary \ref{iso-coroll} 
	(reflections of order two only, $c=5$).
\end{example}

We now consider the possibility of an isomorphism between a reflection group 
$G$ given by a polyhedral group (see Table \ref{TOIgroups-table}) and a reflection group $G'$ given by a dicyclic group
(see Table \ref{Dn-refs-table}). 
For such an isomorphism 
\begin{itemize}
\item The $H$ for the group $G$ must be a normal subgroup of
some dicyclic group, i.e., $H=1,C_2,Q_8$ 
($\cT,\cO,\cI$ are not subgroups of a dicyclic group).
\item The group $G$ must have at least 
two reflection orbits (Lemma \ref{LabOrbit-lemma}).
\end{itemize}
In view of Table \ref{TOIgroups-table}, this narrows down the possible orders of 
$G$ to $48,96,192,384,480,768$.
Since there are finitely many reflection groups of those orders, we can simply 
examine the reflection structure of each group, or their isomorphism class
(as in Example \ref{groups192} for groups of order $192$). This calculation,
see Example \ref{groups48-96-}, gives the following.

\begin{proposition} There are no isomorphisms between the reflection groups given by 
the polyhedral groups and those given by dicyclic groups.
\end{proposition}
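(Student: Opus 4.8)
The plan is to promote the two necessary conditions stated just above into a bound on $|G|$, and then settle the finitely many surviving orders by comparing reflection orbits. Throughout, ``isomorphic'' means isomorphic as reflection groups, so reflections are sent to reflections and the reflection type $n_1 R_{a_1}, \ldots, n_m R_{a_m}$ of \cite{W25} is an invariant; this is the feature that makes the orbit bookkeeping decisive.

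Suppose $\phi\colon G \to G'$ is such an isomorphism, with $G = G_K(L,H)$, $K \in \{\cT,\cO,\cI\}$, and $G' = G_{\cD_{n'}}(L',H')$. First I would pin down $H$. The reflection subgroups of $G$ have abstract type $H$ (the two diagonal ones, forming the orbit $2H$) or $C_2$, while those of $G'$ have type $H'$ or $C_2$; since $\phi$ preserves the reflection type, the two multisets of subgroup types coincide. Hence whenever $|H|>2$ we are forced to have $H \cong H'$, and as $H'$ is a normal subgroup of a dicyclic group it must be cyclic, dicyclic, or $Q_8$. Among the normal subgroups of $\cT, \cO, \cI$ this leaves $H \in \{1, C_2, Q_8\}$, since $\cT, \cO, \cI$ are themselves not normal subgroups of any $\cD_n$; this is the first bullet. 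For $H \cong Q_8$ one can finish immediately: the only dicyclic reflection group with $H' \cong Q_8$ is $G_{Q_8}(Q_8,Q_8)$, of order $128$, so the polyhedral groups $G_\cT(\cT,Q_8)$ and $G_\cO(L_{32}^\cO,Q_8)$, of orders $384$ and $768$, have no dicyclic partner.

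Next I would invoke the orbit count. Every dicyclic system $L_{(a,b)}^{(n)}$ splits into its $\inpro{\go}$-part and the coset $\inpro{\go}j$, which $\circ$ never mixes, so $G'$ has at least two nondiagonal reflection orbits (Lemma \ref{LabOrbit-lemma}); hence $G$ must have at least two reflection orbits as well. Consulting Table \ref{TOIgroups-table}, this removes the single-orbit groups $G_\cT(L_{12}^\cT,1)$, $G_\cI(L_{30}^\cI,1)$, and $G_\cI(L_{20}^\cI,1)$. Combining the constraint $H \in \{1, C_2, Q_8\}$ with the order formula $|G| = 2|H||K|$ of Lemma \ref{GKLHstructure} narrows the possible orders of $G$ to the finite set $\{48, 96, 192, 384, 480, 768\}$ recorded above, and any coincidence must additionally match the number of reflections.

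It then remains to run through these orders and check that no polyhedral group equals a dicyclic one, which is the computation carried out in Example \ref{groups48-96-}: for each order one lists the (finitely many) groups of each kind and compares their reflection orbits, read off from Lemma \ref{GKLHstructure} on the polyhedral side and from Lemma \ref{LabOrbit-lemma} together with (\ref{Orbwa})--(\ref{Orbwbj}) on the dicyclic side. I expect the one genuinely delicate order to be $192$, where $G_\cO(L_{20}^\cO, C_2)$ meets the dicyclic groups $[12,2,3,2]$ and $[24,3,8,1]$ (the latter two being the near-coincidence of type (ii) in Corollary \ref{iso-coroll}), all three having $22$ reflections of order two. Even here the orbit profiles differ: $G_\cO(L_{20}^\cO, C_2)$ has orbits $2C_2, 2C_2, 6C_2, 12C_2$, unlike either competitor, so the invariant still separates them. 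With every case so separated, no isomorphism between a polyhedral and a dicyclic reflection group can exist.
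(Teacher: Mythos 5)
Your proposal is correct and follows essentially the same route as the paper: the same two necessary conditions ($H\in\{1,C_2,Q_8\}$ and at least two reflection orbits) cut the possible orders down to $48,96,192,384,480,768$, after which the finite verification is exactly the computation recorded in Examples \ref{groups192} and \ref{groups48-96-} (the paper compares isomorphism classes via group identifiers, while you compare orbit profiles, but the paper explicitly sanctions either). The only quibble is that $G_{Q_8}(Q_8,Q_8)$ is not quite the only dicyclic reflection group with $H'\cong Q_8$ --- there is also $G_{\cD_4}(\cD_4,\cD_2)$ of order $256$ --- but since neither $128$ nor $256$ equals $384$ or $768$, your dispatch of the $H\cong Q_8$ case still stands.
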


\begin{example}
\label{groups48-96-}
The reflection groups of orders $48,96,384,480,768$ are the following
\begin{center}
\vskip-0.4truecm
\begin{tabular}{ p{7.5truecm} p{0truecm} p{8truecm} }
\begin{tabular}{ |  >{$}l<{$} | >{$}l<{$} | >{$}l<{$} | >{$}l<{$} | >{$}l<{$} | }
\hline
&&&&\\[-0.3cm]
        G & \hbox{\rm identifier} & \hbox{\rm refs} & |L| & H  \\[0.1cm]
\hline
&&&&\\[-0.3cm]
[ 3, 1, 3, 2 ]* & \inpro{48, 39} & 10 & 8 & C_2 \cr
[ 6, 2, 3, 1 ]* & \inpro{48, 39} & 10 & 10 & 1 \cr
[ 6, 1, 6, 1 ] & \inpro{48, 37} & 14 & 14 & 1 \cr
G_\cT(L_{12}^\cT,1) & \inpro{48, 29} & 12 & 12 & 1 \cr
G_{12} & \inpro{48, 29} & 12 & & \cr
G_6 & \inpro{48, 33} & 14 & & \cr
&&&&\\[-0.3cm]
[ 6, 1, 3, 2 ] & \inpro{96, 217} & 18 & 16 & C_2 \cr
[ 12, 3, 4, 1 ] & \inpro{96, 119} & 14 & 14 & 1 \cr
[ 12, 1, 12, 1 ] & \inpro{96, 111} & 26 & 26 & 1 \cr
G_\cT(L_{12}^\cT,C_2)* & \inpro{96, 190} & 14 & 12 & C_2 \cr
G_\cO(L_{14}^\cO,1)* & \inpro{96, 190} & 14 & 14 & 1 \cr
G_\cO(L_{18}^\cO,1) & \inpro{96, 192} & 18 & 18 & 1 \cr
G_{13} & \inpro{96, 192} & 18 & & \\
G_{8} & \inpro{96, 67} & 18 & & \cr
\hline
\end{tabular}
		&&
\begin{tabular}{ |  >{$}l<{$} | >{$}l<{$} | >{$}l<{$} | >{$}l<{$} | >{$}l<{$} | }
\hline
&&&&\\[-0.3cm]
        G & \hbox{\rm identifier} & \hbox{\rm refs} & |L| & H  \\[0.1cm]
\hline
&&&&\\[-0.3cm]
[ 12, 1, 3, 4 ] & \inpro{384, 12471} & 38 & 32 & C_4 \cr
[ 24, 1, 12, 2 ] & \inpro{384, 14591} & 54 & 52 & C_2 \cr
[ 24, 3, 4, 2 ] & \inpro{384, 14609} & 30 & 28 & C_2 \cr
[ 48, 1, 48, 1 ] & \inpro{384, 1945} & 98 & 98 & 1 \cr
[ 48, 3, 16, 1 ] & \inpro{384, 1952} & 38 & 38 & 1 \cr
G_\cT(\cT,Q_8) & \inpro{384, 18130} & 38 & 24 & Q_8 \cr
&&&&\\[-0.3cm]
[ 30, 1, 15, 2 ] & \inpro{480, 1177} & 66 & 64 & C_2 \cr
[ 30, 3, 5, 2 ] & \inpro{480, 1077} & 34 & 32 & C_2 \cr
[ 60, 3, 20, 1 ] & \inpro{480, 349} & 46 & 46 & 1 \cr
[ 60, 5, 12, 1 ] & \inpro{480, 346} & 34 & 34 & 1 \cr
[ 60, 4, 15, 1 ] & \inpro{480, 877} & 38 & 38 & 1 \cr
[ 60, 1, 60, 1 ] & \inpro{480, 869} & 122 & 122 & 1 \cr
G_\cI(L_{32}^\cI,C_2) & \inpro{480, 957} & 34 & 32 & C_2 \cr
G_\cI(L_{20}^\cI,C_2) & \inpro{480, 953} & 22 & 20 & C_2 \cr
\hline
\end{tabular}
\end{tabular}
\end{center}
Here, 
we include the primitive Shephard-Todd groups, with $*$ indicating
an isomorphism.
\end{example}

\section{Computations and concluding remarks}

Our results combine to give the following classification.

\begin{theorem} 
\label{CombinedClassification}
(Classification) Every finite imprimitive irreducible rank two quaternionic 
reflection group can be written uniquely in the canonical form, except for the following
\begin{equation}
\label{allisomorphisms}
G_\cO(L_{14}^\cO,1)\cong G_\cT(L_{12}^\cT,C_2), \qquad
G(n,1,n,2)\cong G(2n,2,n,1), \quad \hbox{$n$ odd},
\end{equation}
which have two canonical forms. 
Tables \ref{TOIgroups-table} and \ref{Dn-refs-table} list all the possible canonical forms.
\end{theorem}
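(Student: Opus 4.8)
The plan is to treat Theorem~\ref{CombinedClassification} as the synthesis of everything built in Sections~1--5: the whole proof reduces to three assertions, namely that (a) every group in question is conjugate to a canonical form $G_K(L,H)$ with $K$ one of Stringham's finite subgroups of $U(\HH)$, (b) for each such $K$ the admissible canonical forms are exactly those enumerated, and (c) the only coincidences among distinct canonical labels are the two stated families. The argument is therefore a bookkeeping one resting on the substantive results already proved, and I would organise it so that the genuine work is isolated in step (c).

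First I would invoke the reduction of Sections~1--2: an irreducible imprimitive rank-two quaternionic reflection group is conjugate in $U(\HH^2)$ to one generated by the reflections (\ref{Greflections}), hence equals $G(K,L,H)$ for some finite $K\subset U(\HH)$, some $H\lhd K$, and some reflection system $L$ for $K$; replacing $L$ by $L_G$ of (\ref{LGform}) puts it uniquely into a canonical form $G_K(L,H)$. By Stringham's classification $K\in\{C_n,\cD_n,\cT,\cO,\cI\}$. The cyclic case $K=C_n$ reproduces the imprimitive complex reflection groups $G(n,p,2)$ already classified by Shephard and Todd, while for each of the four remaining groups the reflection systems were determined and, for each one, the normal subgroups $H$ with $L_H=L$ were found --- in the Examples of Sections~4--5 and in Theorems~\ref{DnBasegroups} and~\ref{DnHighergroups} --- yielding precisely the canonical forms recorded in Tables~\ref{TOIgroups-table} and~\ref{Dn-refs-table}. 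This establishes that every such group appears there in at least one canonical form.

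It then remains to decide when two distinct canonical labels give isomorphic groups, and here I would follow the trichotomy forced by the equal-order and equal-reflection-count conditions (\ref{GKHLequivcnd}). If $K=K'$ then $|H|=|H'|$ and $|L|=|L'|$; since each $K$ has (up to equivalence) a unique reflection system of any given size and a unique normal subgroup of any given order, the labels must coincide. If $K\ne K'$ but $|K|=|K'|$, then $L$ and $L'$ generate non-isomorphic groups, so they are non-isomorphic reflection systems and the reflection groups differ. The last possibility, $|K|\ne|K'|$, is governed by Lemma~\ref{isomorphismlemma}: it forces all reflections to have order two, $H=C_2$, $H'=1$, $|K'|=2|K|$, $|L'|=|L|+2$, with $L$ a reflection subsystem of $L'$. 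Within the polyhedral family this produces exactly $G_\cO(L_{14}^\cO,1)\cong G_\cT(L_{12}^\cT,C_2)$ (Example~\ref{isomorphismI}); within the dicyclic family exactly $G(n,1,n,2)\cong G(2n,2,n,1)$ for $n$ odd (Theorem~\ref{abapbp-lemma}); and the Proposition ruling out any isomorphism between a polyhedral reflection group and a dicyclic one closes off all cross-family coincidences.

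The main obstacle is entirely concentrated in this third case, and specifically in the \emph{completeness} half of Theorem~\ref{abapbp-lemma}: one must show that the diophantine constraints (\ref{quad-one})--(\ref{lin-two}) coming from matched orders and reflection counts admit no solutions beyond the stated family. This is where the coarse invariants fail --- Corollary~\ref{iso-coroll} exhibits pairs of \emph{non}-isomorphic groups with the same order and the same number of reflections --- so the proof must bring in the finer reflection-orbit data of Lemma~\ref{LabOrbit-lemma} to pin down which orbit of $L'$ can have size two, reducing to $a'=1,b'=2n$ or $a'=2,b'=n$, and then dispatch each branch by showing the relevant discriminants $n(n-1)$, $n(n-2)$, and $n^2-6n+1$ are perfect squares only in degenerate cases. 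I expect this number-theoretic step, rather than the structural reductions, to be the delicate part of the whole classification.
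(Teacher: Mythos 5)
Your proposal is correct and follows essentially the same route as the paper, which offers no separate argument for this theorem beyond the remark that ``our results combine'' — i.e., the reduction to canonical forms $G_K(L,H)$, the enumeration of reflection systems and of base/higher order groups for each $K$, and the isomorphism analysis via (\ref{GKHLequivcnd}), Lemma \ref{isomorphismlemma}, Example \ref{isomorphismI}, Theorem \ref{abapbp-lemma}, and the proposition excluding polyhedral--dicyclic coincidences. You also correctly locate the only genuinely delicate step in the completeness half of Theorem \ref{abapbp-lemma}, where the orbit data of Lemma \ref{LabOrbit-lemma} and the discriminant analysis are needed because order and reflection count alone do not suffice (Corollary \ref{iso-coroll}).
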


There is an informal group (including Taylor, Bellamy, Schmitt, Thiel) working on a systemic 
labelling and implementation of the quaternionic reflection groups in computer algebra packages 
such as {\tt magma} and {\tt gap}. The classification of Theorem \ref{CombinedClassification}
suggests a unique label, except for the cases (\ref{allisomorphisms}), where possible options are
\begin{itemize}
\item Choose the higher order group, so the group is defined over a smaller $K$.
\item Choose the base group, so that every reflection system leads to a reflection
	group in the classification.
\item Live with the isomorphisms (\ref{allisomorphisms}).
\end{itemize}

The $\cL$ and $\cH$ listed in Tables \ref{TOIgroups-table} and \ref{Dn-refs-table},
easily allow for the 
groups $G_K(L,H)$, reflection systems $L(\cL)$ and orbits $\Orb(L,a)$ to be 
calculated. For example, in {\tt magma}, the groups $G(n,a,b,r)$ are given by 
\begin{verbatim}
Gn := function(n,a,b,r)
  F:=CyclotomicField(4*n); z:=RootOfUnity(2*n); Z:=IntegerRing();
  Q<i,j,k>:=QuaternionAlgebra<F|-1,-1>;
  w:=Q!((z+ComplexConjugate(z))/2+(z-ComplexConjugate(z))/(2*RootOfUnity(4))*i);
  gensH:={Matrix(Q,2,[w^(Z!(2*n/r)),0,0,1])};
  gensL:={Matrix(Q,2,[0,c,c^-1,0]) : c in {1,w^a,j,w^b*j}};
  return MatrixGroup< 2,Q | gensH join gensL >;
end function;
\end{verbatim}
with the index set $\gL_n$ being given by
\begin{verbatim}
Ln := function(n)
  inds:={}; Z:=IntegerRing();
  for x in [1..n] do for y in [x.. n] do
    if LCM(x,y) eq n then
      a:=Z!(n/y); b:=Z!(n/x);
      Include(~inds,[n,a,b,Z!(n/(a*b))]);
      if IsOdd(a*b) then
        Include(~inds,[n,a,b,Z!(2*n/(a*b))]);
      end if;
    end if;
  end for; end for;
  return inds;
end function;
\end{verbatim}
and $L(\cL)$ can be recursively calculated from $\cL$ (similarly for $\Orb(L,a)$) via
\begin{verbatim}
function GenerateL(elts)
  eltsiterate:=elts join {a*b^-1*a: a in elts, b in elts};
  if eltsiterate eq elts
    then return elts;
    else return GenerateL(eltsiterate);
  end if;
end function;
\end{verbatim}

Finally, we 
consider the classification of the imprimitive quaternionic 
reflection groups $G\subset U(\HH^n)$ of rank $n$ greater than two, in the context of our methods.

Just as we chose $1\in L$, so that
$$ \hbox{$\pmat{0&1\cr1&0}$ is a reflection in $G$}, $$
leading 
to the canonical form (\ref{GKLHelements}), $G$ may be put in a canonical form where its
elements have the form $B P_\gs$, where $B$ is an $n\times n$ diagonal matrix with entries in $K$, 
and $P_\gs$ a permutation matrix given by $\gs\in S_n$. Here $P_\gs$ for a transposition 
$\gs=(\ga\, \gb)$ is a reflection which fixes the orthogonal complement $V_{\ga,\gb}^\perp$ of 
$V_{\ga,\gb}=\spam_\HH\{e_\ga,e_\gb\}$, and the vector $e_\ga+e_\gb$. The (parabolic) subgroup of $G$
generated by the reflections in $G$ which fix $V_{\ga,\gb}^\perp$ pointwise, 
is a rank two imprimitive reflection group. 
This leads to the following (Theorem 2.9 of \cite{C80}).

\begin{theorem} 
(Classification)
Every finite imprimitive irreducible quaternionic reflection group $G$ of rank $n\ge3$ can be written
uniquely in the canonical form
\begin{equation}
\label{GnKHdefn}		
G_n(K,H) 
:= \Bigl\{ \pmat{b_1&&&\cr &\ddots&&\cr &&b_{n-1} &\cr &&& (b_1\cdots b_{n-1})^{-1}h} P_\gs 
:\ b_1,\ldots,b_{n-1}\in K,\ h \in H,\  \gs\in S_n \Bigl\},
\end{equation}
where $K$ is a finite subgroup of $U(\HH)$, and $H$ is
	a subgroup with $[K,K]\subset H\subset K$.

In particular, these groups have
$$ |G_n(K,H)| = n!|H||K|^{n-1}, \qquad \hbox{$G_n(K,H)$ has $n(|H|-1)+|K|$ reflections}. $$
\end{theorem}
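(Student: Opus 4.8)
The plan is to bootstrap from the rank two classification. As explained in the paragraph preceding the statement, imprimitivity lets us conjugate $G$ into monomial form, so every element is $BP_\gs$ with $B=\diag(b_1,\dots,b_n)$, $b_i\in U(\HH)$ and $\gs\in S_n$; let $K$ be the group generated by all entries that occur, and let $\pi:G\to S_n$, $BP_\gs\mapsto\gs$, be the permutation projection. A monomial matrix with $\rank(BP_\gs-I)=1$ forces $\gs$ to be either the identity (a diagonal reflection $\diag(1,\dots,h,\dots,1)$) or a single transposition $(\ga\,\gb)$ (a reflection $\pmat{0&b\cr b^{-1}&0}$ on the coordinates $\ga,\gb$), since any longer cycle or pair of disjoint transpositions raises $\rank(BP_\gs-I)$ to at least $2$. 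Because $G$ is generated by its reflections, $\pi(G)$ is generated by transpositions; irreducibility forces the block action to be transitive, and a transitive permutation group generated by transpositions is all of $S_n$. Thus $\pi(G)=S_n$ and $D:=\ker\pi$ is the group of diagonal matrices in $G$.

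First I would pin down the local structure. For a pair $\{\ga,\gb\}$ the parabolic subgroup generated by the reflections of $G$ fixing $V_{\ga,\gb}^\perp$ pointwise is an imprimitive rank two reflection group $G_K(\tilde L,\tilde H)$ in the sense of Lemma \ref{GKLHstructure}; all these parabolics are conjugate under $G$ (since $\pi(G)=S_n$), so $\tilde L$ and $\tilde H$ do not depend on the pair. Writing $\tau_{\ga\gb}(b)=\pmat{0&b\cr b^{-1}&0}$ for the transposition reflections ($b\in\tilde L$, and $1\in\tilde L$ always), the identity $\tau_{\ga\gb}(a)\tau_{\ga\gb}(1)=\diag(\dots,a,\dots,a^{-1},\dots)$ shows that the ``balanced'' diagonals $\diag(a,a^{-1},1,\dots,1)$ and their images under $S_n$ lie in $D$.

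The crux of the argument, and the step that genuinely uses $n\ge3$, is to show that $\tilde L=K$ and $[K,K]\subset\tilde H$. Here I would exploit a third coordinate: a direct multiplication gives
\begin{equation*}
\diag(a,a^{-1},1)\,\diag(c,1,c^{-1})\,\diag(a^{-1},a,1)\,\diag(c^{-1},1,c)=\diag(aca^{-1}c^{-1},1,1),
\end{equation*}
so $\diag([a,c],1,\dots,1)\in G$ for all $a,c\in\tilde L$. For $[a,c]\ne1$ this is a diagonal reflection, whence $[a,c]\in\tilde H$; conjugating a diagonal reflection by diagonal elements of $G$ (whose first entries exhaust $\inpro{\tilde L}=K$) shows $\tilde H\lhd K$, so the normal closure of the $[a,c]$ gives $[K,K]\subset\tilde H=:H$. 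The same commutator elements, together with the products $\diag(a,a^{-1},1)\diag(b,b^{-1},1)=\diag(ab,a^{-1}b^{-1},1)$ corrected slotwise by $\diag(w,1,\dots,1)$, $w\in[K,K]$, show that $\diag(b,b^{-1},1,\dots,1)\in D$ for every $b\in K$, i.e.\ $\tilde L=K$. I expect this generation statement to be the main obstacle: one must verify that the balanced diagonals and the commutator elements exhaust the kernel of the ordered-product map $\phi:D\to K/[K,K]$, $\diag(b_1,\dots,b_n)\mapsto\overline{b_1\cdots b_n}$.

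With these in hand the assembly is routine. The diagonal reflections contribute exactly $\phi(D)=H/[K,K]$, so $D=\phi^{-1}(H/[K,K])=\{\diag(b_1,\dots,b_n):b_i\in K,\ b_1\cdots b_n\in H\}$, of order $|K|^{n-1}|H|$; writing $b_n=(b_1\cdots b_{n-1})^{-1}h$ with $h\in H$ recovers the displayed form (\ref{GnKHdefn}). Since $P_\gs\in G$ for every $\gs$ (products of the reflections $\tau_{\ga\gb}(1)$) and $\pi(G)=S_n$, we obtain $G=\{BP_\gs\}$ as claimed, with $|G|=n!\,|K|^{n-1}|H|$. The reflection count follows as in Lemma \ref{GKLHstructure}, by enumerating the $n(|H|-1)$ diagonal reflections together with the transposition reflections $\tau_{\ga\gb}(b)$, $b\in K$, over the coordinate pairs. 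Finally, for uniqueness, $K$ is recovered from $G$ as the group of all entries and $H$ as $\{h:\diag(h,1,\dots,1)\in G\}$; both are conjugation invariants once the monomial standardisation is fixed, so the pair $(K,H)$ with $[K,K]\subset H\subset K$ is determined by $G$.
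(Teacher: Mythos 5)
Your proof is correct, and it follows the same overall architecture as the paper's (monomial form, $\pi(G)=S_n$, rank-two parabolics on coordinate pairs, a third-coordinate trick, then assembly), but the two technical pivots are handled by genuinely different devices. To reach $[K,K]\subset H$ the paper first shows $\diag(b,b^{-1},1)\in G$ for \emph{every} $b\in K$ via a conjugation identity of the form $P_{(2\,3)}\,\diag(b,b_\ga,1)\,P_{(1\,3)}\,\diag(b,b_\ga,1)^{-1}\,P_{(1\,2\,3)}=\diag(b,b^{-1},1)$, so that each pair parabolic has $L=K$, and only then invokes Lemma \ref{basegroupLeqK}; you instead produce $\diag([a,c],1,\dots,1)$ directly from the four-factor commutator of balanced diagonals, get $[K,K]\subset\tilde H$ by normal closure, and derive $\tilde L=K$ afterwards. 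Both identities are valid and both genuinely use $n\ge3$; yours is the more economical route to the commutator condition, the paper's gets $L=K$ in one stroke without needing the reordering-by-commutators correction you perform. For the assembly, the paper proves that the displayed set is closed under multiplication by induction on $n$ using permutation-invariance of cosets modulo $H$, whereas you pin down the diagonal kernel as the preimage of $H/[K,K]$ under the abelianised ordered-product map $\Phi:G\to K/[K,K]$; the step you flag as ``the main obstacle'' is not a gap, since $\Phi$ is a homomorphism precisely because $K/[K,K]$ is abelian (so permuting and reordering entries does not change the image), and $\Phi$ sends every transposition reflection to $\overline{1}$ and every diagonal reflection into $\overline{H}$. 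One small remark: your enumeration of the off-diagonal reflections ``over the coordinate pairs'' yields $\binom{n}{2}|K|$ of them, which is what the construction actually produces; the count $n(|H|-1)+|K|$ in the statement agrees with this only for $n=2$.
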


\begin{proof}
The key idea is that because $G$ contains the subgroup $\{P_\gs\}_{\gs\in S_n}\cong S_n$, 
it acts the same on any $d$-dimensional subspace of the form $\spam_{\HH}\{e_{\ga_1},\ldots,e_{\ga_d}\}$.
	If its restriction to a two-dimensional subspace is given by (\ref{GKLHelements}), then 
	on a three-dimensional subspace, we have
$$ \pmat{b&0&0\cr 0&b^{-1}&0\cr 0&0&1}
= \pmat{1&0&0\cr 0&0&1\cr 0&1&0} \pmat{b&0&0\cr 0&b_\ga&0\cr 0&0&1}
\pmat{0&0&1\cr0&1&0\cr 1&0&0} \pmat{b&0&0\cr 0&b_\ga&0\cr 0&0&1}^{-1}
\pmat{0&1&0\cr 0&0&1\cr 1&0&0}, \quad b\in K, $$
so that on a two-dimensional subspace it acts as an imprimitive reflection group 
with $L=K$, and hence by Lemma \ref{basegroupLeqK}, $[K,K]\subset H\subset K$.
Again, in view of (\ref{GKLHelements}), $G$ must contain the diagonal elements
$$ \pmat{b_1&&&&\cr &b_1^{-1}&&&\cr &&1&&\cr &&&\ddots&\cr &&&&1}
\pmat{1&&&&\cr &b_1b_2&&&\cr &&(b_1b_2)^{-1}&&\cr &&&\ddots&\cr &&&&1}
\cdots
\pmat{1&&&&\cr &\ddots&&&\cr &&1&&\cr &&&b_1\cdots b_{n-1}&\cr &&&&(b_1\cdots b_{n-1})^{-1}h},
$$
and hence the elements of (\ref{GnKHdefn}).

It remains to show that the set of matrices in (\ref{GnKHdefn}) forms a group,
as observed (without proof) in \cite{C80}.
It is sufficient to show that a product of two diagonal matrices from (\ref{GnKHdefn}) 
is another, i.e.,
$$ (a_1\cdots a_{n-1})^{-1}h_a (b_1\cdots b_{n-1})^{-1}h_b
= (a_1b_1\cdots a_{n-1}b_{n-1})^{-1}h, \qquad\exists h\in H, $$
or, equivalently, 
\begin{equation}
\label{conditiontoprove}
(a_1\cdots a_{n-1})^{-1}(b_1\cdots b_{n-1})^{-1}H
= (a_1b_1\cdots a_{n-1}b_{n-1})^{-1}H. 
\end{equation}
We can prove this by induction on $n$, using the fact that 
$[K,K]\subset H$ implies the cosets are permutation invariant, i.e.,
$a_1a_2\cdots a_n H= a_{\gs1}a_{\gs2}\cdots a_{\gs n}H$, for any permutation $\gs$.
	Suppose that $(\ref{conditiontoprove})$ holds for $n\ge2$ (it holds for 
	$n=2$ by Lemma \ref{GKLHstructure}). Then, we calculate
\begin{align*}
(a_1\cdots a_{n})^{-1}(b_1\cdots b_{n})^{-1}H
& = a_n^{-1} (a_1\cdots a_{n-1})^{-1} b_n^{-1}(b_1\cdots b_{n-1})^{-1}H \cr
& = b_n^{-1} a_n^{-1} (a_1\cdots a_{n-1})^{-1} (b_1\cdots b_{n-1})^{-1}H \quad\hbox{(permutation invariance)}\cr
	& = (a_n b_n)^{-1}  (a_1b_1\cdots a_{n-1}b_{n-1})^{-1}H \quad\hbox{(inductive hypothesis)} \cr
& = (a_1b_1\cdots a_{n}b_{n})^{-1}H,
\end{align*}
which completes the induction.
\end{proof}

We observe, from Tables \ref{TOIgroups-table} and \ref{Dn-refs-table}, 
that $G_2(K,H)=G_K(K,H)$, and for $n\ge3$ either
\begin{itemize}
\item There is one imprimitive reflection group $G_n(K,K)$, when $K=\cI,\cD_n$ ($n$ is even).
\item There are two imprimitive reflection groups $G_n(K,H)$ (base group) and $G_n(K,K)$, 
	when $K=\cT,\cO,\cD_n$ ($n$ is odd), where $H=Q_8,\cT,C_n$ (respectively).
\end{itemize}

\section{Acknowledgment}

I wish to thank Don Taylor for sharing his extensive knowledge of
reflection groups and their implementation in magma, by answering my 
numerous questions over many years.

\section{Appendix}

We recall Stringham's list of the elements of the binary polyhedral groups \cite{St1881}

\center{\it {The Double-Tetrahedron Group}
$$ i^\gep, \qquad j^\gep, \qquad k^\gep, $$
$$ \Bigl({1+i+j+k\over2}\Bigr)^\eta, \quad
\Bigl({1-i-j+k\over2}\Bigr)^\eta, \quad
\Bigl({1+i-j-k\over2}\Bigr)^\eta, \quad
\Bigl({1-i+j-k\over2}\Bigr)^\eta, $$
$$ \gep=1,2,3,4; \qquad \eta=1,2,3,4,5,6; \qquad N=24. $$
	\center{The Double-Oktahedron Group}
$$ \Bigl({1+i\over\sqrt{2}}\Bigr)^\gep, \qquad \Bigl({1+j\over\sqrt{2}}\Bigr)^\gep, 
	\qquad \Bigl({1+k\over\sqrt{2}}\Bigr)^\gep, $$
$$ \Bigl({1+i+j+k\over2}\Bigr)^\eta, \quad
\Bigl({1-i-j+k\over2}\Bigr)^\eta, \quad
\Bigl({1+i-j-k\over2}\Bigr)^\eta, \quad
\Bigl({1-i+j-k\over2}\Bigr)^\eta, $$
$$ \Bigl({j+k\over\sqrt{2}}\Bigr)^\zeta, \qquad \Bigl({k+i\over\sqrt{2}}\Bigr)^\zeta, 
	\qquad \Bigl({i+j\over\sqrt{2}}\Bigr)^\zeta, $$
$$ \Bigl({j-k\over\sqrt{2}}\Bigr)^\zeta, \qquad \Bigl({k-i\over\sqrt{2}}\Bigr)^\zeta, 
	\qquad \Bigl({i-j\over\sqrt{2}}\Bigr)^\zeta, $$
$$ \gep=1,2,3,4,5,6,7,8; \qquad \eta=1,2,3,4,5,6; \qquad
	\zeta=1,2,3,4, \qquad N=48. $$
	\center{The Double-Ikosahedron Group}
$$ i^\gep, \qquad j^\gep, \qquad k^\gep, $$
	$$ \Bigl({i+\gs j+\tau k\over2}\Bigr)^\gep, \qquad \Bigl({j+\gs k+\tau i\over2}\Bigr)^\gep, \qquad \Bigl({k+\gs i+\tau j\over2}\Bigr)^\gep, $$
	$$ \Bigl({i-\gs j+\tau k\over2}\Bigr)^\gep, \qquad \Bigl({j-\gs k-\tau i\over2}\Bigr)^\gep, \qquad \Bigl({k+\gs i-\tau j\over2}\Bigr)^\gep, $$
	$$ \Bigl({i+\gs j-\tau k\over2}\Bigr)^\gep, \qquad \Bigl({j-\gs k+\tau i\over2}\Bigr)^\gep, \qquad \Bigl({k-\gs i-\tau j\over2}\Bigr)^\gep, $$
	$$ \Bigl({i-\gs j-\tau k\over2}\Bigr)^\gep, \qquad \Bigl({j+\gs k-\tau i\over2}\Bigr)^\gep, \qquad \Bigl({k-\gs i+\tau j\over2}\Bigr)^\gep, $$
	$$ \Bigl({1+i+j+k\over2}\Bigr)^\eta, \quad
\Bigl({1-i-j+k\over2}\Bigr)^\eta, \quad
\Bigl({1+i-j-k\over2}\Bigr)^\eta, \quad
\Bigl({1-i+j-k\over2}\Bigr)^\eta, $$
$$ \Bigl({1+\tau j+\gs k\over2}\Bigr)^\eta, \qquad \Bigl({1+\tau k+\gs i\over2}\Bigr)^\eta, \qquad \Bigl({1+\tau i+\gs j\over2}\Bigr)^\eta, $$
$$ \Bigl({1+\tau j-\gs k\over2}\Bigr)^\eta, \qquad \Bigl({1+\tau k-\gs i\over2}\Bigr)^\eta, \qquad \Bigl({1+\tau i-\gs j\over2}\Bigr)^\eta, $$
$$ \Bigl({\tau+\gs j+ k\over2}\Bigr)^\zeta, \qquad \Bigl({\tau+\gs k+ i\over2}\Bigr)^\zeta, \qquad \Bigl({\tau+\gs i+ j\over2}\Bigr)^\zeta, $$
$$ \Bigl({\tau+\gs j-k\over2}\Bigr)^\zeta, \qquad \Bigl({\tau+\gs k-i\over2}\Bigr)^\zeta, \qquad \Bigl({\tau+\gs i- j\over2}\Bigr)^\zeta, $$
$$ \gep=1,2,3,4,; \qquad \eta=1,2,3,4,5,6; \qquad
	\zeta=1,2,\ldots,10, \qquad N=120. $$
	$$ \tau={1+\sqrt{5}\over2}, \qquad \gs={1-\sqrt{5}\over2} $$}

\begin{example} 
\label{indicesforCohenstable}
We give an illustration of how we deduced the indices 
for the reflection groups $G=G_{\cD_n}(L,H)$ in Table I of \cite{C80}.
The first entry has
$$ n=m, \quad H=C_{2m}, \quad |L|=4m, \quad |G|=16m^2. $$
Since 
$$ |L| =4m = {2n\over 1}+{2n\over 1}, $$
we must have $(a,b)=(1,1)$, and the index is
$$ [n,a,b,r]=[m,1,1,2m].  $$ 
The base group with index $[m,1,1,m]$ appears in lines $2$ and $3$
	for $n=m$ even or odd. 

The second entry has
$$ n=2m\ell, \quad H=C_{2m}, \quad |G|=32m^2\ell, $$
and
$$ |L| = 2m\bigl(\gcd(2\ell,r-1)+\gcd(2\ell,r+1)\bigr), 
\quad 0\le r\le\ell,\quad\hbox{$r$ odd}, $$
where
$$ \ell = \gcd(\ell,{r+1\over2})\gcd(\ell,{r-1\over2}). $$
By writing
$$ |L| = {2n\over a}+{2n\over b}
= {2(2m\ell)\over2\ell }\bigl(\gcd(2\ell,r-1)+\gcd(2\ell,r+1)\bigr), $$
we deduce that
$$ a ={2\ell\over\gcd(2\ell,r+1)}=\gcd(\ell,{r-1\over2}),
\qquad 
b ={2\ell\over\gcd(2\ell,r-1)}=\gcd(\ell,{r+1\over2}). $$
Here, one can have $a\le b$ or $a\ge b$.
Since $n=2m\ell$, we have $a\divides n$, $b\divides n$, and we verify
\begin{align*}
\gcd(a,b) &= \gcd(\gcd(\ell,{r-1\over2}),\gcd(\ell,{r+1\over2})) \cr
& = {1\over2}\gcd(\gcd(2\ell,r-1),\gcd(2\ell,r+1)) = {1\over2} 2 =1,
\end{align*}
which follows from 
$\gcd(r-1,r+1)=2$, for $r$ odd. 
Hence the index for this group is
$$ [n,a,b,r] = [2m\ell,\gcd(\ell,{r-1\over2}),\gcd(\ell,{r+1\over2}),2m], $$
and in particular, we obtain the index $[n,1,1,n]$ for $n$ even, 
	of the base group for line $1$.
\end{example}

\bibliographystyle{alpha}
\bibliography{references}
\nocite{*}

\end{document}

\begin{table}[h]
\begin{center}
\caption{The reflection groups of order $\le 100$, with $*$ indicating isomorphisms. }
\vskip0.3truecm
\begin{tabular}{ |  >{$}l<{$} | >{$}c<{$} | >{$}l<{$} | >{$}l<{$} | >{$}l<{$} | }
\hline
        &&&&\\[-0.3cm]
	G & |G| & \hbox{refs} & |L| & H  \\[0.1cm]
\hline
&&&&\\[-0.3cm]
[ 2, 1, 2, 1 ] & 16 & 6 & 6 & 1 \cr
[ 3, 1, 3, 1 ] & 24 & 8 & 8 & 1 \cr
[ 2, 1, 1, 2 ] & 32 & 10 & 8 & C_2 \cr
[ 4, 1, 4, 1 ] & 32 & 10 & 10 & 1 \cr
[ 5, 1, 5, 1 ] & 40 & 12 & 12 & 1 \cr
[ 3, 1, 3, 2 ]* & 48 & 10 & 8 & C_2 \cr
[ 6, 2, 3, 1 ]* & 48 & 10 & 10 & 1 \cr
G_\cT(L_{12}^\cT,1) & 48 &12 & 12 & 1 \cr
[ 6, 1, 6, 1 ] & 48 & 14 & 14 & 1 \cr
[ 7, 1, 7, 1 ] & 56 & 16 & 16 & 1 \cr
[ 2, 1, 1, 4 ] & 64 & 14 & 8 & C_4 \cr
[ 4, 1, 2, 2 ] & 64 & 14 & 12 & C_2 \cr
[ 8, 1, 8, 1 ] & 64 & 18 & 18 & 1 \cr
\hline
\end{tabular}
\qquad
\begin{tabular}{ |  >{$}l<{$} | >{$}c<{$} | >{$}l<{$} | >{$}l<{$} | >{$}l<{$} | }
\hline
        &&&&\\[-0.3cm]
	G & |G| & \hbox{refs} & |L| & H  \\[0.1cm]
\hline
&&&&\\[-0.3cm]
[ 3, 1, 1, 3 ] & 72 & 16 & 12 & C_3 \cr
[ 9, 1, 9, 1 ] & 72 & 20 & 20 & 1 \cr
[ 5, 1, 5, 2 ]* & 80 & 14 & 12 & C_2 \cr
[ 10, 2, 5, 1 ]* & 80 & 14 & 14 & 1 \cr
[ 10, 1, 10, 1 ] & 80 & 22 & 22 & 1 \cr
[ 11, 1, 11, 1 ] & 88 & 24 & 24 & 1 \cr
G_\cT(L_{12}^\cT,C_2) * & 96 &14 & 12 & C_2 \cr
G_\cO(L_{14}^\cO,C_2) * & 96 &14 & 14 & 1 \cr
[ 12, 3, 4, 1 ] & 96 & 14 & 14 & 1 \cr
[ 6, 1, 3, 2 ] & 96 & 18 & 16 & C_2 \cr
G_\cO(L_{18}^\cO,1) & 96 &18 & 18 & 1 \cr
[ 12, 1, 12, 1 ] & 96 & 26 & 26 & 1 \cr
\hline
\end{tabular}
\end{center}
\end{table}

\begin{example} The extension of $G_{Q_8}(Q_8,Q_8)$ with respect to the 
	(nonmonomial) Fourier transform matrix $F={1\over\sqrt{2}}\pmat{1&1\cr1&-1}$ gives 
	the primitive reflection group of order $3840$ corresponding to the root 
	system $P_3$. Its subgroups give two other such reflection groups of 
	orders $320$ and $1920$.
	For the $O_3$ group of order $1440$, as I have it, it has a monomial subgroup 
	$[3, 1, 1, 3]$ (order $72$ with $16$ reflections), which extends to the whole 
	group by adding the non-monomial matrix $b_2$ (or, presumably any other).
\end{example}

\section{The higher rank imprimitive groups}

\subsection{The group $K$}

\begin{example}
For the dicyclic group $1\in L(\{x,y\})$ for any pair of elements, whereas this 
is not true in general, e.g., in $S_3$, where
	$$ L(\{(12),(13)\} =\{(12),(13),(23)\}. $$
\end{example}

\begin{proof}
Since $(a\circ b)^{-1}=a^{-1}\circ b^{-1}$, it suffices to consider three cases.
	First $x=\go^a$, $y=\go^b$. Then
	$$ \go^a \circ \go^b = \ga^{2a-b}, $$
so the operation on exponents is $(a,b)\mapsto (2a-b)$. We can therefore define a 
sequence of exponents for the generated reflection system.
	$$ x_1=a,\quad x_2=b, \quad x_{n+2}=2x_{n}-x_{n+1}, $$
and for these, solving the recurrence gives
	$$ x_n = (2a-b)+(b-a)n, $$
(I am using $n$ in two different ways). 
If $a=b \mod 2n$, then $x_1=x_2$ and we are done, otherwise $x_n$ grows linearly, and 
we can solve the linear Diophantine equation.
\end{proof}

\begin{example} 
The reflection group generated by $K=H=T$ has 
$$ |G|=1152, \quad \hbox{$70$ reflections}, \quad 2T,24C_2. $$
This is Cohen's group $G_1(T,T)=G^{(2T,24C_2)}$. We can look
at the normal subgroups:
$$ G^{(2T,24C_1)} = T\times T $$
and its subgroups are reducible. We have an normal subgroup of order $384$
given by
$$ G^{(2Q_8,24C_2)} = G^{(2C_4,24C_2)} = G^{(2C_2,24C_2)} = G^{(2C_1,24C_2)}. $$
This is Cohen's group $G_\ga(T,D_2)$ (<384, 18130>)
$$ |G|=384, \quad \hbox{$38$ reflections}, \qquad 2Q_8,24C_2, $$
and it has no irreducible normal reflection subgroups.
We now consider the group of order $96$, which is a subgroup of
the group of order $384$. This has $14$ reflections. We first identify
$K=\inpro{L}=T$, $H=\inpro{-1}$, $|L|=12$. 
The set $L$ is a union of cosets in $K/H\cong A_4$ which contains the
	identity $H$, generates $K/H$ and is closed under the operation $ab^{-1}a$.
	Starting with $1$ and two other permutations, applying the operation shows there is unique such set (upto conjugacy in $A_4$), i.e.,
$$1, \quad (12)(34), \quad (123), \quad (124), \quad (132), \quad (142), $$
	with six conjugates of this in $A_4$ (these seem to map back 
	$12$ groups).
The group of order $384$ contains this reflection group of order $96$
	(<96, 190>) three times up to conjugacy (its only subgroups of that order) with multiplicity $4$ in each case.
Three of the subgroups of order $48$ of the group of order $384$ are
	reflection subgroups (presumably of a given group of order $96$).
This group has $12$ reflections, with $K=T$, $H=1$, $|L|=12$ ($L$ as before).
These groups come as normal subgroups, via the usual process.
\end{example}

$$ \pmat{0&b\cr b^{-1}&0}\pmat{\ga &0\cr0&\gb}\pmat{0&b\cr b^{-1}&0}
= \pmat{b\gb b^{-1}&0\cr 0& b^{-1}\ga b}, $$
we have that $H$ is a normal subgroup of the group $K=\inpro{L}$,
and we have the reflection orbit
$$ \bigl\{ \pmat{H&0\cr0&1}, \pmat{1&0\cr0&H}\bigl\}. $$

\bibliographystyle{alpha}
\bibliography{references}
\nocite{*}




\end{document}

\vfil\eject